\documentclass[11pt,a4paper, leqno]{article}
\pdfoutput=1
\usepackage{epsfig}

\usepackage{amsmath,amsthm,amssymb,latexsym}
\usepackage{amsfonts}
\usepackage[american]{babel}
\usepackage{mathrsfs}
\usepackage{color}

\textwidth6.6truein \oddsidemargin-.5truecm \evensidemargin-.5truecm

\newtheorem{theorem}{Theorem}[section]
\newtheorem{definition}[theorem]{Definition}
\newtheorem{lemma}[theorem]{Lemma}

\newtheorem{rem}[theorem]{Remark}
\newtheorem{proposition}[theorem]{Proposition}

\newtheorem{example}{Example}
\newtheorem{corollary}[theorem]{Corollary}

\newcommand{\diam}{{\mathrm{diam}}\,}

\newcommand{\R}{{\Bbb R}}

\newcommand{\E}{{\Bbb E}}
\newcommand{\N}{{\Bbb N}}

\def\j1n{j=1,\dots,n}
 \def\j1m{j=1,\dots,m}
\def\i1np1{\in +1}

\def\R{\mathbb{R}}
\def\N{\mathbb{N}}

\def\i1np1{\in +1}

\def\R{\mathbb{R}}

\def\u1{u^{(1)}}

\def\h1{h^{(1)}}


\newcommand{\la}{\lambda}

\newcommand{\Om} {\Omega}







\newcommand{\pa}{\partial}


\newcommand{\dist} {\mbox{\rm dist }}




\begin{document}

\begin{title}{On  the complements  of union of open balls of fixed radius in the Euclidean space}
\end{title}

\author{Marco Longinetti\footnote{Marco.Longinetti@unifi.it, Dipartimento DIMAI,
Università degli Studi di Firenze, V.le Morgagni 67/a,  50138
Firenze - Italy}\\
 Paolo Manselli\footnote{Paolo.Manselli@unifi.it, Firenze, Italy}\\
Adriana Venturi\footnote{Adriana.Venturi@unifi.it, Firenze, Italy}}
\date{}
\maketitle

\begin{abstract}Let an $R$-body be  the complement of the union of open balls of  radius $R$ in $\E^d$. The $R$-hulloid of a closed not empty set $A$, the minimal $R$-body containing  $A$, is investigated; if $A$ is the set of the vertices of a simplex,  the $R$-hulloid of $A$ is completely described (if $d=2$)  and  if $d> 2$ special examples are studied.  The class of $R$-bodies   is  compact in the Hausdorff metric if $d =2$, but not compact if $d>2$.    
\end{abstract}

\section{Introduction}Given a closed set $E\subset \E^d (d \geq 2)$, the convex hull of $E$ is the intersection of all closed half spaces containing $E$; the convex hull can be considered as a regularization of $E$. Given $R > 0$, a different hull of $E$ could be  the intersection of all closed sets, containing $E$, complement of open balls of radius $R$ not intersecting $E$. Let us call this set the $R$-hulloid of $E$, denoted as $co_R(E)$; the $R$-bodies are the sets coinciding with their  $R$-hulloids. $R$-bodies are called $2R$-convex sets in \cite{Per}.

  The $R$-hulloid $co_R(E)$ has been introduced by Perkal \cite{Per} as a regularization of $E$, hinting that $co_R(E)$ is a mild regularization of a closed set. Mani-levitska \cite{mani} hinted that the $R$-bodies cannot be too irregular.
 
In our work it is shown that this may not be true: in Theorem  \ref{rhulloidedisconesso} an example of a connected set is  constructed  with disconnected $R$-hulloid.
A deeper study gave us the possibility to add new properties to the $R$-bodies: a representation of $co_R(E)$ is given in Theorem \ref{lemminopaolo} and  new properties of $\pa co_R(E)$ are proved in Theorem \ref{l1}, Theorem \ref{l2} and  Corollary \ref{l3}. Moreover contrasting results on regularity are found:    every closed set contained in an hyperplane or in a sphere of radius $r\geq R$ is an $R$-body (Theorems \ref{r5} and \ref{r4}). As a consequence a problem of Borsuk, quoted by Perkal \cite{Per}, has a negative answer (Remark \ref{r5.1}). In $\S$ \ref{proprbodies} it is shown that the $R$-body regularity heavily depends on the dimension. A definition (Definition \ref{defnotclassic}) similar to the classic convexity is given  for the class of planar $R$-bodies, namely (Theorem \ref{proPcor}):
  $$ A \mbox{ \quad is an $R$-body  iff \quad} co_R(\{a_1,a_2,a_3\}) \subset A \quad  \forall a_1,a_2,a_3 \in A.$$
  As consequence, if $d=2$: a sequence of compact $R$-bodies  converges in the Hausdorff metric to an $R$-body (Corollary \ref{liminR2}). If $d> 2$, in Theorem 3.16 it is proved that   a sequence of compact $R$-bodies  converges  to an $(R-\epsilon)$-body, for every $0< \epsilon < R$; however, the body limit may not to be an $R$-body as an example in $\S$ \ref{seccon} shows. If $E$ is connected,  properties of connectivity of $co_R(E)$   are investigated in  $\S$ \ref{coneeanddisconne}.
 
  In  \cite[Definition 2.1]{Gol} V. Golubyatnikov and  V. Rovenski introduced the class  $\mathcal{K}_2^{1/R}$. In Theorem \ref{inclusionbodies}  it is proved that   
   the class of $R$-bodies is  strongly contained in $\mathcal{K}_2^{1/R}$. If $d=2$ ,  under additional assumptions, it is also proved that the two classes coincide.

\section{Definitions and Preliminaries}

 Let $\E^d, d \ge 2,$ be  
  the linear Euclidean Space  with unit sphere $\mathcal{S}^{d-1} $; $   A \subset \E^d $ will be called a  \textbf{ body } if $ A  $ is non empty and closed.
   The minimal affine space containing $A$ will be
     $Lin(A)$. The convex hull 
   of $A$ will be $co(A)$; for notations and results of convex bodies, let us refer to  \cite{Schn}.

\begin{definition} Let $A$ be a not empty set.

   $ A_\epsilon := \{x \in  \E^d$ $: \mbox{dist} (A,x) < \epsilon;\}; $   
     $ A'_\epsilon := \{x \in  \E^d$ $: \mbox{dist} (A,x) \ge \epsilon \}; $     
     $ A^-:= A \cup \partial A ; $      
       $ A^c:=  \E^d \setminus A  $ ;      
       $Int(A)=A^-\setminus \pa A.$
       
       $ B(x, r) $ will be the open ball of center  $ x \in  \E^d $ and radius $  r >0 $; a sphere of radius $r$ is  $\pa B(x,r)$.

 \end{definition}

  \begin{proposition}\label{propoiniziale} Let us recall the following facts for reference.
 \begin{itemize}

 \item \textbf{1}  $ \quad A_\epsilon $ is open;  $ \quad A_\epsilon =   (A^{-})_\epsilon \subset  ( A_\epsilon)^{-}.$  

\item  \textbf{2} $ \quad A_\epsilon = \{x \in \E^d$ $: \exists \, a \in A, \mbox{ for which } x\in B(a, \epsilon) \} = \{x \in \E^d $  $ : B(x, \epsilon) \cap A \neq \emptyset,   \}=$ $  \cup_{ a \in A}  B(a, \epsilon) = A + B(0,\epsilon). $

\item  \textbf{3} $ \quad  A'_\epsilon = \{x \in \E^d:$  $\forall a \in A $,  $  x \notin B(a, \epsilon)\,\} =$
 
 $ 
\qquad  = \{x \in \E^d:$  $ B(x, \epsilon) \cap A  = \emptyset   \}  
= \cap_{ a \in A }\{ \; B(a, \epsilon)^ c  \}.$

  \item \textbf{4} Let $A_i, i=1,2$ be non empty sets. Then
  $$\quad A^1\subset A^2  \Rightarrow (A^1)_\epsilon \subset  (A^2)_\epsilon .$$

  \item \textbf{5} \; If $ E $ is non empty, then $ E \subset (E'_R)'_R \subset E_R$, see
  \cite[lemma 4.3]{Col}.
 
 \end{itemize}
\end{proposition}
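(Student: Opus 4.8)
The plan is to derive every item from the single observation that the distance function $x \mapsto \mbox{dist}(A,x) = \inf_{a \in A}|x-a|$ is $1$-Lipschitz (hence continuous) and depends on $A$ only through its closure, so that $\mbox{dist}(A,\cdot) = \mbox{dist}(A^-,\cdot)$ since $A^- = A \cup \partial A$ is exactly the closure of $A$.

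For item \textbf{1}, openness of $A_\epsilon$ is immediate because $A_\epsilon = \{x : \mbox{dist}(A,x) < \epsilon\}$ is the preimage of the open half-line $(-\infty,\epsilon)$ under a continuous map; the equality $A_\epsilon = (A^-)_\epsilon$ is just $\mbox{dist}(A,\cdot) = \mbox{dist}(A^-,\cdot)$, and the inclusion $(A^-)_\epsilon = A_\epsilon \subset (A_\epsilon)^-$ holds because any set lies in its closure. Items \textbf{2} and \textbf{3} I would obtain by unwinding the definitions: $\inf_{a}|x-a| < \epsilon$ holds exactly when some $a\in A$ satisfies $|x-a|<\epsilon$ (i.e.\ $x \in B(a,\epsilon)$, equivalently $B(x,\epsilon)\cap A \ne \emptyset$), which gives the three rewritings of $A_\epsilon$ and its identification with the Minkowski sum $A+B(0,\epsilon)$; dually, $\inf_{a}|x-a|\ge \epsilon$ holds exactly when \emph{every} $a\in A$ satisfies $|x-a|\ge\epsilon$, yielding the descriptions of $A'_\epsilon$ as an intersection of complements of balls. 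Item \textbf{4} is the monotonicity of the infimum: $A^1\subset A^2$ forces $\mbox{dist}(A^2,x)\le \mbox{dist}(A^1,x)$, whence $(A^1)_\epsilon\subset (A^2)_\epsilon$.

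The only item carrying genuine content is \textbf{5}, whose two inclusions I would prove separately. For $E\subset (E'_R)'_R$, fix $e\in E$ and any $y\in E'_R$; since $e\in E$ we have $|e-y|\ge \mbox{dist}(E,y)\ge R$ by the definition of $E'_R$, so $\mbox{dist}(E'_R,e)\ge R$ and thus $e\in (E'_R)'_R$. For $(E'_R)'_R\subset E_R$ I would argue by contraposition: if $x\notin E_R$ then $\mbox{dist}(E,x)\ge R$, so $x\in E'_R$, whence $\mbox{dist}(E'_R,x)=0<R$ and therefore $x\notin (E'_R)'_R$.

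The step I expect to require the most care is this second inclusion in item \textbf{5}, together with the degenerate case $E'_R=\emptyset$ (which forces $E_R=\E^d$ and makes the inclusion vacuously true, with $\mbox{dist}$ of a point from the empty set read as $+\infty$) and the consistent handling of strict versus non-strict inequalities in the definitions of $A_\epsilon$ and $A'_\epsilon$; once these conventions are fixed the argument reduces to the two short implications above. Alternatively, one may simply invoke \cite[Lemma 4.3]{Col} for item \textbf{5}.
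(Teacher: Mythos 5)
Your proposal is correct, and there is nothing in the paper to compare it against: Proposition \ref{propoiniziale} is stated as a list of recalled facts with no proof given, only a citation to \cite[Lemma 4.3]{Col} for item \textbf{5}. Your uniform derivation of items \textbf{1}--\textbf{4} from the continuity of $x\mapsto\dist(A,x)$ and its invariance under closure, and your two-sided argument for item \textbf{5} (including the degenerate case $E'_R=\emptyset$, which indeed forces $E_R=\E^d$), is a complete and sound verification of everything the paper takes for granted here.
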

\begin{definition}(\cite{Fed})
	 If $A \subset \E^d,\, a\in A$, then $reach(A,a)$ is the supremum of all
	numbers $\rho$ such that for every $x\in B(a,\rho)$ 
	there exists a unique point $b\in A$ satisfying
	$|b-x|=\dist(x,A)$.
	Also:
	$$reach(A):=\inf \{reach(A,a): a \in A \}.$$
	\end{definition}

Let  $\; b_1, b_2 \in \E^d, | b_1 - b_2 | < 2R $ and let $  \mathfrak{h} (b_1,b_2)  $ be the intersection of all closed balls of radius $ R $ containing $   b_1, b_2  .$

 \begin{proposition}(\cite[Theorem 3.8]{Col}, \cite{Rat})\label{p1} 
The body	$ A $ has reach $ \ge R $ if and only if $ A \cap   \mathfrak{h} (b_1,b_2) $ is connected
 for every $\; b_1, b_2 \in A, 0 <| b_1 - b_2 | < 2R $.

\end{proposition}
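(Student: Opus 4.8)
The plan is to prove both implications through the single analytic reformulation of reach. By Federer \cite{Fed}, $reach(A)\ge R$ holds if and only if every point $x$ with $\dist(x,A)<R$ has a unique nearest point $\xi(x)\in A$, and on the open tube $U:=\{x:\dist(x,A)<R\}$ the projection $\xi$ is then single-valued and continuous; equivalently, for $a\in A$, a unit normal $u$ at $a$ and every $a'\in A$ one has the tangency inequality $\langle a'-a,u\rangle\le |a'-a|^2/(2R)$, i.e. the open ball $B(a+Ru,R)$ misses $A$. I will also use the elementary geometry of the spindle $\mathfrak{h}(b_1,b_2)$: writing $m=(b_1+b_2)/2$, $s=|b_1-b_2|/2<R$ and letting $H$ be the perpendicular bisector hyperplane of $b_1,b_2$, the set $\mathfrak{h}(b_1,b_2)$ is convex, its two tips are $b_1,b_2$, its diameter equals $|b_1-b_2|=2s<2R$, and its slice by $H$ is contained in the ball of $H$ centred at $m$ of radius $r_H:=R-\sqrt{R^2-s^2}$.

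For the implication ``connectedness $\Rightarrow reach\ge R$'' I argue by contraposition. If $reach(A)<R$ there is a point $x$ with $\delta:=\dist(x,A)<R$ and two distinct nearest points $a_1\ne a_2$, so that $B(x,\delta)\cap A=\emptyset$ and $|a_1-a_2|=2s\le 2\delta<2R$. Put $b_1=a_1$, $b_2=a_2$. Since $x$ is equidistant from $b_1,b_2$ it lies on $H$, with $e:=|x-m|=\sqrt{\delta^2-s^2}$. The slice $\mathfrak{h}(b_1,b_2)\cap H$ lies in the $H$-ball of centre $m$ and radius $r_H$, and a short computation using that $t\mapsto\sqrt{t^2-s^2}$ has derivative $>1$ gives $r_H+e<\delta$; hence $\mathfrak{h}(b_1,b_2)\cap H\subset B(x,\delta)$, so $A\cap\mathfrak{h}(b_1,b_2)$ does not meet $H$. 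As $b_1,b_2$ lie strictly on opposite sides of $H$, the two open half-spaces bounded by $H$ split $A\cap\mathfrak{h}(b_1,b_2)$ into two nonempty relatively open pieces, so it is disconnected.

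For the converse I assume $reach(A)\ge R$ and suppose, for contradiction, that $A\cap\mathfrak{h}(b_1,b_2)=F_1\sqcup F_2$ is disconnected into two nonempty compact parts. Choosing $p\in F_1$, $q\in F_2$, convexity of $\mathfrak{h}(b_1,b_2)$ keeps the segment $c_t=(1-t)p+tq$ inside it, and $\dist(c_t,A)\le\min(|c_t-p|,|c_t-q|)\le\tfrac12|p-q|\le\tfrac12\diam\mathfrak{h}(b_1,b_2)=s<R$, so $c_t\in U$ for all $t$. Then $t\mapsto\xi(c_t)$ is a continuous path in $A$ from $\xi(p)=p\in F_1$ to $\xi(q)=q\in F_2$. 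If this path stayed inside $\mathfrak{h}(b_1,b_2)$ it would be a connected subset of $A\cap\mathfrak{h}(b_1,b_2)$ joining $F_1$ to $F_2$, which is impossible; hence there is $t^{\ast}$ with $y:=c_{t^{\ast}}\in\mathfrak{h}(b_1,b_2)$, $\dist(y,A)<R$, yet $\xi(y)\notin\mathfrak{h}(b_1,b_2)$.

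The whole converse therefore rests on the following claim, which I expect to be the main obstacle: if $reach(A)\ge R$ and $y\in\mathfrak{h}(b_1,b_2)$ with $\dist(y,A)<R$, then $\xi(y)\in\mathfrak{h}(b_1,b_2)$. To prove it I would set $a=\xi(y)$, $\delta=|y-a|$, $u=(y-a)/\delta$, and use Federer's exterior ball: $B(a+Ru,R)\cap A=\emptyset$, so in particular $b_1,b_2\notin B(a+Ru,R)$. Assuming $a\notin\mathfrak{h}(b_1,b_2)$ there is a closed ball $\bar B(o,R)\supset\{b_1,b_2\}$ with $|o-a|>R$, while $y\in\mathfrak{h}(b_1,b_2)$ forces $|o-y|\le R$; expanding $|o-y|^2$ already yields $\langle o-a,u\rangle>\delta/2>0$. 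The goal is to convert the two Federer inequalities $\langle b_i-a,u\rangle\le|b_i-a|^2/(2R)$, together with $b_1,b_2\in\bar B(o,R)$ and $|b_1-b_2|<2R$, into a contradiction. The delicate point is that one tangency inequality per point is not enough: one must exploit that $y$ lies in \emph{every} $R$-ball through $b_1,b_2$, which I would do by sliding the empty exterior ball centre from $a+Ru$ towards $o$ and tracking, by a continuity/connectedness argument, the first contact of the moving ball with $A$, forcing it to capture $b_1$ or $b_2$ and thereby contradicting emptiness. This sliding argument, rather than the two comparatively clean steps above, is where the real work lies.
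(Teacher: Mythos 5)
The paper does not prove this proposition at all: it is quoted from \cite[Theorem 3.8]{Col} and \cite{Rat}, so there is no internal argument to compare yours with; I can only judge your proposal on its own terms. Your first implication (connectedness of all slices $\Rightarrow$ reach $\ge R$) is complete and correct: the contrapositive, the estimate $\mathfrak{h}(b_1,b_2)\cap H\subset B(m,R-\sqrt{R^2-s^2})$ for the bisector slice, and the monotonicity of $t\mapsto t-\sqrt{t^2-s^2}$ giving $r_H+e<\delta$ all check out, and the separation of $A\cap\mathfrak{h}(b_1,b_2)$ by $H$ is legitimate since $b_1,b_2$ lie in it on opposite open sides of $H$.

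The converse, however, has a genuine gap, and you have located it yourself: everything is reduced to the claim that $\xi(y)\in\mathfrak{h}(b_1,b_2)$ whenever $y\in\mathfrak{h}(b_1,b_2)$ and $\dist(y,A)<R$, and the ``sliding the exterior ball'' paragraph is a plan, not a proof. This claim is the substantive half of the whole proposition, and it is tight: taking $s^2=R\delta-\delta^2/4$ one can put $b_1,b_2$ on $\partial B(c,R)$ with $c=a+Ru$, place $y$ at the equator point $m+\frac{\delta}{2}v$ of the spindle and find $a=\xi(y)=m-\frac{\delta}{2}v$ landing \emph{exactly} on the opposite equator point of $\partial\mathfrak{h}(b_1,b_2)$; so no argument using only the two tangency inequalities at $a$ together with a single ball $\bar B(o,R)$ can work, exactly as you suspect. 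A cleaner way to close the converse, using material already quoted in this paper, is Proposition \ref{prop3.10}: the set $D=\mathfrak{h}(b_1,b_2)$ satisfies $\mathfrak{h}(a,b)\subset D$ for all $a,b\in D$ (any closed $R$-ball containing $b_1,b_2$ contains $D$, hence contains $a,b$, hence contains $\mathfrak{h}(a,b)$), so $F=A\cap\mathfrak{h}(b_1,b_2)$ has reach $\ge R$; $F$ is compact with $\diam F\le |b_1-b_2|<2R$, and if $F=F_1\sqcup F_2$ were disconnected, the midpoint of a pair $(p,q)\in F_1\times F_2$ minimizing $|p-q|$ would be a point at distance $|p-q|/2<R$ from $F$ with two nearest points, forcing $reach(F)<R$ --- which is precisely your first-direction argument in miniature. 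This replaces the unproven projection claim by a quoted result, but if you want a self-contained proof you must still supply either that projection lemma or Proposition \ref{prop3.10}; as written, the converse is not established.
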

 
 \begin{rem}\label{rhullandrhulloid} The R-hull of a set $ E $ was introduced in  \cite[Definition 4.1]{Col} as the minimal set $\hat{E}$ of reach
 $ \ge R $  containing $ E. $ Therefore if $reach(A) \geq R$ , then $A$ coincides with its $R$-hull. The R-hull of a set E may not exist, see \cite[Example 2]{Col}. \end{rem} 
 
\begin{proposition}\cite[Theorem 4.4]{Col}\label{theorem4.4} Let $A \subset \E^d$. If $reach (A') \geq R$ then $A$ admits $R$-hull $\hat{A}$ and   
$$\hat{A}=(A'_R)'_R.$$
\end{proposition}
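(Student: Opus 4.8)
The plan is to show that $C:=(A'_R)'_R$ is the minimal body of reach $\ge R$ that contains $A$; by Remark~\ref{rhullandrhulloid} such a minimal body is precisely the $R$-hull, so this identifies $\hat A$ with $C$. Concretely I would prove three facts: \emph{(i)} $A\subset C$, which is immediate from Proposition~\ref{propoiniziale}.5 applied to $E=A$; \emph{(ii)} every body $F$ with $A\subset F$ and $reach(F)\ge R$ satisfies $C\subset F$; and \emph{(iii)} $reach(C)\ge R$. Step \emph{(iii)} is the only place where the hypothesis $reach(A'_R)\ge R$ is used, and it is the main obstacle.

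For the minimality \emph{(ii)} I would show $F^{c}\subset C^{c}=(A'_R)_R$. Fix $x\in F^{c}$ and let $q\in F$ realise $t:=\dist(x,F)$. If $t\ge R$ then the open ball $B(x,R)$ already misses $F$; if $0<t<R$, set $u=(x-q)/t$ and $z=q+Ru$, so that $|x-z|=R-t<R$ and, by the rolling--ball description of positive reach in \cite{Fed}, the open ball $B(z,R)$ is disjoint from $F$. In either case I obtain an open ball of radius $R$ that contains $x$ and misses $F$, hence misses $A$; its centre thus lies in $A'_R$ and is at distance $<R$ from $x$, so $x\in(A'_R)_R$. This argument uses only $reach(F)\ge R$.

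The crux is \emph{(iii)}. Writing $S=A'_R$, so that $reach(S)\ge R$ and $C=S'_R=\{x:\dist(x,S)\ge R\}$, I would check that the nearest--point projection onto $C$ is single valued on $\{x:0<\dist(x,C)<R\}$. Given such an $x$ one has $\rho:=\dist(x,S)<R$, so by $reach(S)\ge R$ the foot point $s_0\in S$ is unique; with $u=(x-s_0)/\rho$ the rolling--ball property gives $B(s_0+Ru,R)\cap S=\emptyset$, and an elementary expansion of $|s_0+tu-s|^{2}$ for $s\in S$ then yields $\dist(s_0+tu,S)=t$ for $0\le t\le R$. Hence the normal ray meets $C$ first at $c_0:=s_0+Ru$. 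Finally, the Lipschitz inequality $\dist(c,S)\le|c-x|+\rho$, valid for all $c\in C$, shows $\dist(x,C)=R-\rho$, and equality forces $\dist(c,S)=R$, forces $s_0$ to be the foot point of $c$, and forces $x\in[s_0,c]$, whence $c=c_0$. Thus the projection is unique and $reach(C)\ge R$.

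Combining \emph{(i)}--\emph{(iii)} gives that $(A'_R)'_R$ is the minimal body of reach $\ge R$ containing $A$, i.e. the $R$-hull $\hat A$. I expect the delicate point to be the passage, inside \emph{(iii)}, from the mere disjointness $B(s_0+Ru,R)\cap S=\emptyset$ to the exact identity $\dist(s_0+tu,S)=t$ along the whole normal segment; once this is secured, both \emph{(ii)} and \emph{(iii)} reduce to triangle--inequality estimates. Alternatively, \emph{(iii)} could be derived from the connectivity criterion of Proposition~\ref{p1}, but the projection argument seems more direct.
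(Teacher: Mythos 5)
Your proposal is correct, but there is nothing in the paper to compare it against: the paper states this result as Proposition~\ref{theorem4.4} purely as a citation of \cite[Theorem 4.4]{Col} and gives no proof (the Appendix only reproves \cite[Theorem 4.8]{Col}, i.e.\ Proposition~\ref{propco2}). What you have written is in effect a self-contained reconstruction of the argument in \cite{Col}, in the standard Federer style: minimality via the tangent-ball (rolling) property of any competitor $F$ of reach $\ge R$, and $reach((A'_R)'_R)\ge R$ via uniqueness of the metric projection onto $C=S'_R$ using the unique foot point on $S=A'_R$. All three steps check out; in particular your triangle-inequality chain $|c-s_0|\le|c-x|+|x-s_0|=R=\dist(c,S)$ correctly forces collinearity and hence $c=c_0$, and the expansion of $|s_0+tu-s|^2$ does yield $\dist(s_0+tu,S)=t$ once $B(s_0+Ru,R)\cap S=\emptyset$ is known. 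The only point worth making explicit is the boundary case $reach(S)=R$: Federer's tangent-ball property gives $B(s_0+su,s)\cap S=\emptyset$ only for $s<reach(S)$, so the disjointness of $B(s_0+Ru,R)$ from $S$ (and likewise of $B(z,R)$ from $F$ in step \emph{(ii)}) must be obtained by letting $s\to R^-$ in the inequality $|s_0+su-s'|\ge s$, $s'\in S$; this is a one-line limit, not a gap. You should also record the degenerate case $A'_R=\emptyset$, where $C=\E^d$ and your step \emph{(ii)} still shows $F=\E^d$ for any competitor, so the identity holds trivially.
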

 \begin{proposition}\cite[Theorem 4.8]{Col}\label{propco2}
 If $A \subset \E^2$ is a connected subset of an open ball of radius $R$, then A admits R-hull.
  \end{proposition}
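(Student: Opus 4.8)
The plan is to deduce the existence of the $R$-hull from Proposition \ref{theorem4.4}: since $A$ is closed and contained in a bounded ball it is compact, and it suffices to prove that $reach(A'_R)\ge R$, for then $A$ admits the $R$-hull $\hat A=(A'_R)'_R$. To verify the reach condition I would apply the characterization of Proposition \ref{p1} to the body $A'_R$, so the task is to show that $A'_R\cap\mathfrak h(b_1,b_2)$ is connected for every $b_1,b_2\in A'_R$ with $0<|b_1-b_2|<2R$. Writing $A'_R\cap\mathfrak h(b_1,b_2)=\mathfrak h(b_1,b_2)\setminus A_R$ and noting that $A_R=A+B(0,R)$ is open and, because $A$ is connected, also connected, the whole problem reduces to the planar statement that the connected open set $A_R$ does not disconnect the convex lens $\mathfrak h(b_1,b_2)$, whose tips $b_1,b_2$ satisfy $\dist(b_i,A)\ge R$ and therefore lie outside $A_R$.

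The engine of the argument, and the place where the hypothesis $A\subset B(c,R)$ enters, is the lemma: if $A\subset B(c,R)$ then $\co A\subset A_R$, i.e. every point of the convex hull of $A$ is at distance $<R$ from $A$. I would prove this by contradiction using the smallest enclosing ball $\overline{B(c^*,\rho^*)}$ of the compact set $A$, for which $\rho^*<R$. If some $p\in\co A$ had $\dist(p,A)\ge R$, then for every $a\in A$ the relations $|a-c^*|\le\rho^*$ and $|a-p|\ge R$, combined through $|a-c^*|^2=|a-p|^2-2\langle a-p,c^*-p\rangle+|c^*-p|^2$, give $\langle a-p,\,c^*-p\rangle\ge\tfrac12(R^2-\rho^{*2}+|c^*-p|^2)>0$; thus all of $A$ lies strictly on one side of the line through $p$ orthogonal to $c^*-p$, contradicting $p\in\co A$. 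A consequence I would record at once is that $A_R$ has no bounded complementary component: any $q\notin\co A$ with $\dist(q,A)\ge R$ can be sent to infinity inside $\E^2\setminus A_R$ along the ray leaving the side of a hyperplane that strictly separates $q$ from $\co A\supseteq A$ (along that ray the distance to every point of $A$ is nondecreasing), so each bounded component of $\E^2\setminus A_R$ would be contained in $\co A\subset A_R$, which is impossible. Hence $\E^2\setminus A_R$ is connected and, in particular, $b_1,b_2\notin\co A$.

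It then remains to upgrade this global non-separation to non-separation inside the lens. Here I would use the representation $\mathfrak h(b_1,b_2)=\overline{B(o_1,R)}\cap\overline{B(o_2,R)}$ by the two extreme balls, so that $\partial\mathfrak h(b_1,b_2)$ consists of two circular arcs of radius $R$ meeting at $b_1,b_2$. If $\mathfrak h(b_1,b_2)\setminus A_R$ were disconnected, the convexity of the lens and the openness of $A_R$ would produce a crosscut $\gamma\subset A_R\cap\mathfrak h(b_1,b_2)$ joining the two arcs and separating $b_1$ from $b_2$; since $[b_1,b_2]\subset\mathfrak h(b_1,b_2)$, such a $\gamma$ necessarily meets the segment $[b_1,b_2]$. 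I would then derive a contradiction by combining the no-hole property, the fact that $b_1,b_2$ lie outside $A_R$, and the bounds $\dist(b_i,A)\ge R$ together with $A\subset B(c,R)$, showing that $A_R$ cannot reach across the lens without forcing some point of $A$ to be closer than $R$ to one of the tips.

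I expect this last step to be the main obstacle. The difficulty is precisely that ``$A_R$ has no holes'' does not by itself forbid a bounded set from cutting a convex region in two — a bounded band does exactly this while leaving its complement connected — so the crossing must be excluded quantitatively, by controlling how far the shell $A_R\setminus\co A$ can protrude near the pointed ends of the lens in terms of $R$. That the containment in an \emph{open} ball of radius $R$ is essential is shown by the extremal configuration of three points at mutual distance $R\sqrt3$, the vertices of an equilateral triangle of circumradius $R$: there $\co A$ fails to be covered by $A_R$, the neighbourhood $A_R$ develops a hole, and the conclusion breaks down.
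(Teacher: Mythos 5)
Your reduction via Proposition \ref{theorem4.4} and Proposition \ref{p1} is the right starting point, and your lemma that $A\subset B(c,R)$ forces $\co (A)\subset A_R$ (via the smallest enclosing ball), together with the no-holes consequence for $\E^2\setminus A_R$, is correct and is a nice observation that does not appear in the paper. But there is a genuine gap exactly where you announce ``the main obstacle'': you never exclude the crosscut, i.e.\ you never show that the connected open set $A_R$ cannot separate $b_1$ from $b_2$ inside the lens $\mathfrak h(b_1,b_2)$. The data available at that point --- every point of the crosscut $\gamma$ is within distance $<R$ of $A$, $\dist(b_i,A)\ge R$, $\co (A)\subset A_R$, and $\diam A<2R$ --- does not visibly force ``some point of $A$ closer than $R$ to one of the tips'', and you offer no quantitative estimate; a bounded connected band can cut a convex region in two while its complement in the plane stays connected, so the no-holes property alone cannot do the job. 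As written, the argument establishes global non-separation of $\E^2\setminus A_R$ but not the statement required by Proposition \ref{p1}, so the proof is incomplete.

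The paper closes precisely this gap by a different device: when $reach(A'_R)<R$, the proof of \cite[Theorem 3.8]{Col} supplies an \emph{extremal} pair $c_1,c_2\in A'_R$ with $\mathfrak h(c_1,c_2)\cap A'_R=\{c_1,c_2\}$, so that the entire lens minus its two tips lies in $A_R$. This is far stronger than the existence of a crosscut: it forces points of $A$ within distance $R$ of each of the two ``waist'' points $c_5,c_6$ of $\pa\mathfrak h(c_1,c_2)$, and the rhombi of side $R$ in the construction show that such points of $A$ must lie in two disjoint regions $\Omega_1,\Omega_2$ on opposite sides of the axis through $c_1,c_2$; connectedness of $A$ then places a point $a_0$ of $A$ on that axis, and the final case analysis produces two points of $A$ at distance at least $2R$, contradicting $\diam A<2R$. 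Note that this is where the hypothesis $A\subset B(x_0,R)$ enters the paper's proof --- through the diameter bound, not through your enclosing-ball lemma. To salvage your route you would need to first prove (or cite) the reduction to the extremal pair; without it the crosscut gives you too little to work with.
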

 In the Appendix a more  detailed  proof is given. Let us also recall the following result:
 \begin{proposition}\cite[Theorem 3.10]{Col}, \cite{Rat2})\label{prop3.10}
  Let $A\subset \E^d$ be a closed set such that $reach(A) \geq R > 0$.  If $D\subset \E^d$ is a closed set such that for every $a,b \in D$, 
  $\mathfrak{h}(a,b)\subset D$ and $A\cap D\neq \emptyset$, then $reach(A\cap D)\geq R$.
 \end{proposition}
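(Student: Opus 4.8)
The plan is to reduce everything to the connectivity criterion of Proposition \ref{p1}. First I would observe that $A\cap D$ is a body: it is nonempty by hypothesis, and it is closed as the intersection of two closed sets. Thus it makes sense to speak of its reach, and by Proposition \ref{p1} it suffices to show that $(A\cap D)\cap \mathfrak{h}(b_1,b_2)$ is connected for every pair $b_1,b_2\in A\cap D$ with $0<|b_1-b_2|<2R$.

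Fix such a pair $b_1,b_2$. The crucial observation is that, since $b_1,b_2\in D$, the defining property of $D$ gives $\mathfrak{h}(b_1,b_2)\subset D$. Consequently $D\cap\mathfrak{h}(b_1,b_2)=\mathfrak{h}(b_1,b_2)$, and therefore
\[
(A\cap D)\cap \mathfrak{h}(b_1,b_2)=A\cap\big(D\cap\mathfrak{h}(b_1,b_2)\big)=A\cap\mathfrak{h}(b_1,b_2).
\]
In other words, slicing $A\cap D$ by $\mathfrak{h}(b_1,b_2)$ yields exactly the same set as slicing $A$ by it.

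Now I would invoke the hypothesis $reach(A)\ge R$. Since $b_1,b_2\in A$ and $0<|b_1-b_2|<2R$, the forward implication of Proposition \ref{p1} guarantees that $A\cap\mathfrak{h}(b_1,b_2)$ is connected. By the identity above, $(A\cap D)\cap\mathfrak{h}(b_1,b_2)$ is then connected as well. Since $b_1,b_2$ were an arbitrary admissible pair, the converse implication of Proposition \ref{p1}, applied to the body $A\cap D$, yields $reach(A\cap D)\ge R$, which is the claim.

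As for the difficulty, there is essentially no genuine obstacle once Proposition \ref{p1} is available, because that characterization converts the reach condition into a purely set-theoretic statement about the slices $\mathfrak{h}(b_1,b_2)$. The only point requiring care is the reduction step $\mathfrak{h}(b_1,b_2)\subset D$, which is precisely where the closure-type hypothesis on $D$ (that it contains $\mathfrak{h}(a,b)$ for every pair of its points) enters, and it is exactly this that makes the $D$-slice and the $A$-slice coincide. Without Proposition \ref{p1} one would be forced to argue directly with unique nearest-point projections, and the main work would then be to verify that a nearest point of $A\cap D$ to a point near $A\cap D$ agrees with the nearest point of $A$; this more delicate route I would deliberately avoid in favor of the connectivity criterion.
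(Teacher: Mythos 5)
The paper does not prove this proposition at all: it is quoted from \cite{Col} (Theorem 3.10) and \cite{Rat2}, so there is no in-paper argument to compare against. Your proof is correct and is in fact the standard argument (essentially the one in the cited reference): the identity $(A\cap D)\cap \mathfrak{h}(b_1,b_2)=A\cap \mathfrak{h}(b_1,b_2)$, valid because $\mathfrak{h}(b_1,b_2)\subset D$, lets you transfer connectivity of the slices of $A$ to the slices of $A\cap D$, and both directions of Proposition \ref{p1} do the rest; the only hypotheses you need to check for applying Proposition \ref{p1} to $A\cap D$ are that it is nonempty and closed, which you did.
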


 \section{ R-bodies}
  Let  $ R $ be a fixed positive real number. 
  $ B $ will be any open ball of radius $  R $.
    $B(x)$ will be the open ball of center  $ x \in  \E^d$ and radius $  R $.  
 Next definitions  have been introduced in  \cite{Per}.

 \begin{definition}\label{d1}
 Let $A$ be a body,  $ A $ will be called an R-body  if $\;  \forall y \in A^c,$  there exists an open  ball $ B $ in $E^d$ (of radius $R$) satisfying $ y\in B \subset  A^c. $ 
This is equivalent to say  
 $$   A^c = \cup \{B : B \cap A = \emptyset \}; $$
that is 
 $$   A= \cap \{B^c : B \cap A = \emptyset  \}  .$$
  \end{definition}
Let us notice that for any $r\geq  R$, the  body $(B(x,r))^c$ is an $R$-body.
 \begin{definition}\label{d+1}
  Let $ E \subset \E^d$ be a non empty set. The set
  $$  co_R(E) :=  \cap \{B^c : B \cap E = \emptyset  \}  $$
 will be called the \textbf{R-hulloid} of $ E$. Let $co_R(E)=\E^d$ if there are no balls $B \subset E^c$.
 \end{definition}
 
 \begin{rem} In \cite{Per} the sets defined in Definition \ref{d1} are called $2R$ convex sets and the sets defined in Definition \ref{d+1} are called $2R$ convex hulls. On the other hand Meissner \cite{Mei} and Valentine \cite[pp. 99-101]{val} use the names of $R$-convex  sets and $R$-convex hulls for different families of sets. An $s$-convex set is also defined in \cite[p. 42]{Fenc}.  To avoid misunderstandings  we decided to call $R$-bodies and $R$-hulloids   the sets defined in Definition \ref{d1} and in Definition \ref{d+1}.
   \end{rem}
  \begin{rem}\label{R-hull=R-hulloid} Let us notice that $ co_R(E)$ is an R-body (by definition) and $ E \subset   co_R(E) .$
  Moreover $ A$ is an R-body if and only if  $ A =   co_R(A)$.
  The R-hulloid always exists.
   \end{rem}  
  Clearly every  convex body $E$ is an $R$-body (for all positive $R$) and its convex hull $co(E)=E$ coincides with its $R$-hulloid.
  
   \begin{rem}\label{reach>=RimpliesRbodies} It was noticed in \cite[Corollary 4.7]{Col} and proved in \cite[Proposition 1]{Cue} that, when the R-hull exists,  it coincides with the R-hulloid. If $A$ has $reach$ greater or equal than $R$, then  (see remark \ref{rhullandrhulloid}) $A$ has $R$-hull, which coincides with $A$ and with its $R$-hulloid, then $A$ is an $R$-body. 
\end{rem} 
    
    \begin{proposition}\label{provedin8} Let $E$ be a non empty set.
    The following facts have been proved in \cite{Per}.
  
 \begin{itemize}
 \item \textbf{a} $\quad   co_R(E) =( E'_R)'_R; $
 \item \textbf{b} $\quad E^- \subset   co_R(E)  ;$
 \item \textbf{c} \; Let $\, E^1 \subset E^2 ; $ then   $co_R(E^1) \subset  co_R(E^2); $
 \item \textbf{d} $\quad  co_R(E^1) \cup  co_R(E^2) \subset  co_R(E^1 \cup E^2); $
 \item \textbf{e} $\quad   co_R( co_R (E) ) =  co_R (E); $ 
 \item \textbf{f} \; Let $ A^{(\alpha)} , \alpha \in \mathcal{A}$ be R-bodies, then $\cap _{\alpha  \in \mathcal{A}}$ $A^{(\alpha)} $ is an R-body;
 \item  \textbf{g} \; $\diam E = \diam co_R(E)$;
 \item  \textbf{h} \; If $A$ is an $R$-body then $A$ is an $r$-body for $0<r< R$;
\item  \textbf{i} \; $co_R(E)\subset co(E)$ for all $R> 0$.
 \end{itemize}
    \end{proposition}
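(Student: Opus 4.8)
The plan is to establish \textbf{a} first, since it is the bridge between the ball-theoretic Definition \ref{d+1} and the morphological operation $(\cdot)'_R$, and then to deduce most of the remaining items from it together with elementary set manipulations. For \textbf{a} I would observe that an open ball $B(c,R)$ satisfies $B(c,R)\cap E=\emptyset$ if and only if $\mathrm{dist}(c,E)\ge R$, i.e. $c\in E'_R$; this is immediate because $B(c,R)$ is open. Hence the balls appearing in Definition \ref{d+1} are exactly those centred in $E'_R$, and
$$co_R(E)=\bigcap_{c\in E'_R}B(c,R)^c=\{y:\;|y-c|\ge R\ \ \forall c\in E'_R\}=(E'_R)'_R.$$
This is the main structural step; once it is in place the rest is largely bookkeeping.

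Then \textbf{b} follows from \textbf{a} together with item \textbf{5} of Proposition \ref{propoiniziale}, which gives $E\subset(E'_R)'_R$; since $co_R(E)$ is an intersection of closed sets it is closed, whence $E^-\subset co_R(E)$. For \textbf{c} I would argue directly from Definition \ref{d+1}: if $E^1\subset E^2$ then every ball missing $E^2$ also misses $E^1$, so the defining family for $co_R(E^2)$ is contained in that for $co_R(E^1)$, and intersecting over a smaller family enlarges the set, giving $co_R(E^1)\subset co_R(E^2)$. Property \textbf{d} is then immediate from \textbf{c} applied to $E^1,E^2\subset E^1\cup E^2$.

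For \textbf{e} and \textbf{f} the key is the elementary observation that any set of the form $A=\bigcap_{B\in\mathcal F}B^c$, with $\mathcal F$ a family of open radius-$R$ balls, is an $R$-body: if $y\notin A$ then $y\in B$ for some $B\in\mathcal F$, and since $A\subset B^c$ we have $B\cap A=\emptyset$, so $y\in B\subset A^c$, which is exactly Definition \ref{d1}. Applying this to $\mathcal F=\{B:B\cap E=\emptyset\}$ shows $co_R(E)$ is an $R$-body, and since an $R$-body equals its own $R$-hulloid by Definition \ref{d1}, we obtain $co_R(co_R(E))=co_R(E)$, which is \textbf{e}. For \textbf{f}, each $R$-body $A^{(\alpha)}$ is such an intersection, hence so is $\bigcap_\alpha A^{(\alpha)}$, and the same observation applies: the intersection is closed, and the covering of an exterior point is inherited from covering in one factor $A^{(\alpha)}$.

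The substantive geometric content is in \textbf{h} and \textbf{i}, from which \textbf{g} then follows. For \textbf{h}, given $y\in A^c$ with $y\in B(c,R)\subset A^c$, I would place a radius-$r$ ball through $y$ inside $B(c,R)$ by choosing its centre on the segment $[c,y]$: writing $\rho=|y-c|<R$ and $c'=c+t(y-c)$, the conditions $B(c',r)\subset B(c,R)$ and $y\in B(c',r)$ read $t\rho\le R-r$ and $(1-t)\rho<r$, and these are simultaneously solvable precisely because $\rho<R$. For \textbf{i}, I would use that $\overline{co(E)}$ is the intersection of all closed half-spaces $H\supset E$: if $y\notin H=\{\langle z,u\rangle\le\beta\}$ with $|u|=1$, then taking $c=y+(R-\delta)u$ for small $\delta\in(0,\langle y,u\rangle-\beta]$ yields a ball $B(c,R)$ with $y\in B(c,R)$ and $B(c,R)\subset\{\langle z,u\rangle>\beta\}$, hence $B(c,R)\cap E=\emptyset$ and $y\notin co_R(E)$; the contrapositive gives $co_R(E)\subset\overline{co(E)}$. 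Finally \textbf{g} follows since $E\subset co_R(E)$ gives $\diam E\le \diam co_R(E)$, while $co_R(E)\subset\overline{co(E)}$ together with $\diam \overline{co(E)}=\diam E$ gives the reverse inequality. The main obstacle is the half-space argument of \textbf{i}: one must choose the ball's centre so that the whole open ball clears the bounding hyperplane while still capturing $y$, and one should note that the separation argument naturally delivers the closed convex hull, a distinction that is harmless for the diameter statement \textbf{g} but worth flagging when $co(E)$ fails to be closed.
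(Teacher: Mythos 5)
Your proposal is correct, but there is nothing in the paper to compare it against: the paper offers no proof of this proposition at all, simply attributing all nine items to \cite{Per}. Your self-contained derivation is therefore a genuine addition rather than an alternative route. The architecture is sound: the identification of the defining family of balls with those centred in $E'_R$ (valid because the balls are open, so $B(c,R)\cap E=\emptyset$ iff $\dist(c,E)\ge R$) gives \textbf{a} at once, and your key observation --- that any intersection of complements of open radius-$R$ balls is an $R$-body, since an excluded point lies in one of the balls and that ball is disjoint from the intersection --- cleanly delivers \textbf{e} and \textbf{f} and is exactly the content of Remark \ref{R-hull=R-hulloid}. The computations in \textbf{h} (the inequalities $t\rho\le R-r$ and $(1-t)\rho<r$ are compatible precisely because $\rho<R$, with the degenerate case $\rho=0$ trivial) and in \textbf{i} (the centre $y+(R-\delta)u$ pushes the whole ball past the bounding hyperplane) check out, and you are right to flag that the separation argument yields the \emph{closed} convex hull; this is harmless here because the paper defines $co(E)$ as the intersection of all closed half spaces containing $E$. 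Two minor points worth noting: in \textbf{a} and \textbf{f} one should dispose of the degenerate cases ($E'_R=\emptyset$, where the convention $co_R(E)=\E^d$ applies, and an empty intersection in \textbf{f}, which falls outside the paper's definition of a body), and in \textbf{g} one should cite the standard fact $\diam co(E)=\diam E$, which your argument uses but does not prove. Neither affects the validity of the proof.
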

    \begin{rem}\label{r2}
 Let $ E$ be a body.  
  From  $ \textbf{c} $ of Proposition \ref{provedin8} it follows that if $ A $ is an R-body and  $ A \supset E, $ then $ A \supset co_R(E)$ and  $co_R(E)$ is the minimal  $R$-body  containing $E$. 
 \end{rem}

   \begin{lemma}\label{kincorE}
  A point $k\in co_R(E)$ if and only if  does not exist an  open ball $B(x,l)\ni k$ with $l\geq R$, $B(x,l) \subset E^C$.     
\end{lemma}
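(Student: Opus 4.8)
The plan is to argue by contraposition and reduce everything to a single geometric construction. Negating both sides, the statement to be proved becomes
$$ k \notin co_R(E) \iff \exists\, B(x,l),\ l \ge R,\ k \in B(x,l) \subset E^c. $$
By Definition \ref{d+1}, together with the standing convention that $B$ always denotes an open ball of radius exactly $R$, the left-hand side means precisely that there is a radius-$R$ open ball $B$ with $k \in B$ and $B \cap E = \emptyset$, i.e. $k \in B \subset E^c$. Thus the whole lemma amounts to the equivalence between the existence of a radius-$R$ ball through $k$ inside $E^c$ and the existence of a ball of radius $\ge R$ through $k$ inside $E^c$. One implication is immediate, since a radius-$R$ ball is in particular a ball of radius $\ge R$.

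For the converse — the only substantial direction — I would start from a ball $B(x,l)$ with $l \ge R$, $k \in B(x,l)$ and $B(x,l) \subset E^c$, and manufacture a radius-$R$ ball $B(x',R)$ still containing $k$ and still contained in $B(x,l)$; since $B(x,l)\subset E^c$, this smaller ball automatically misses $E$. Write $d := |k-x| < l$. If $d = 0$ take $x' = x = k$, which works because $l \ge R$ forces $B(k,R) \subset B(x,l)$. Otherwise, slide the center along the segment from $x$ towards $k$: set $x' := x + s\,u$ with $u := (k-x)/d$ the unit vector pointing to $k$, and choose the displacement $s := \min(d,\, l-R) \ge 0$.

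The verification is then a short computation with the triangle inequality. The containment $B(x',R) \subset B(x,l)$ holds if and only if $|x'-x| + R \le l$, i.e. $s \le l-R$, which is true by the choice of $s$. For $k \in B(x',R)$ note that $k - x' = (d-s)u$, so $|k-x'| = |d-s|$: if $d \le l-R$ then $s=d$ and $|k-x'| = 0 < R$, while if $d > l-R$ then $s = l-R$ and $|k-x'| = d-l+R < R$ because $d < l$. In either case $|k-x'| < R$, so $k \in B(x',R) \subset B(x,l) \subset E^c$, as required. I expect the main (though modest) obstacle to be exactly this case distinction: when $k$ lies close to the boundary of the large ball one cannot simply center the radius-$R$ ball at $k$, and the displacement must be tuned so that the small ball remains internally tangent to $B(x,l)$ while still enclosing $k$. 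Finally, the degenerate convention $co_R(E)=\E^d$, used when no radius-$R$ ball fits in $E^c$, is consistent with the lemma, since by the same equivalence the absence of a radius-$R$ ball in $E^c$ rules out any ball of radius $\ge R$ in $E^c$.
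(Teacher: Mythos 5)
Your proof is correct, but it is organized differently from the paper's. The paper disposes of the substantial direction in one line by invoking machinery already set up: the complement of an open ball of radius $l\ge R$ is an $R$-body (noted just after Definition \ref{d1}), the intersection of $R$-bodies is an $R$-body (item \textbf{f} of Proposition \ref{provedin8}), and $co_R(E)$ is the minimal $R$-body containing $E$ (Remark \ref{r2}); hence $co_R(E)\cap (B(x,l))^c$ would be a strictly smaller $R$-body containing $E$, a contradiction. You instead work straight from Definition \ref{d+1} and prove the underlying geometric fact explicitly: any open ball of radius $l\ge R$ containing $k$ contains an open ball of radius exactly $R$ that still contains $k$, obtained by sliding the center toward $k$ by $s=\min(d,\,l-R)$. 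Your computation is right (the case split on whether $d\le l-R$ is exactly the point where centering at $k$ itself would fail), and your treatment of the trivial direction and of the degenerate convention $co_R(E)=\E^d$ is also fine. What your version buys is self-containment at the level of elementary metric geometry — it in effect proves the unproved assertion that $(B(x,l))^c$ is an $R$-body for $l\ge R$, which the paper's one-liner silently relies on; what the paper's version buys is brevity and reuse of the structural properties of $R$-bodies. Both are complete proofs of the lemma.
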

\begin{proof} As $(B(x,l))^c$ is an $R$-body,  the set $co_R(E) \cap (B(x,l))^c \supset E$ would be an $R$-body  strictly included 
  in $co_R(E)$, which is the minimal $R$-body containing $E$.
\end{proof}

  \begin{lemma}\label{r3}
 Let $ E$ be a body. Then 
\begin{equation}\label{coRCER}
co_R(E) \subset E_R.
\end{equation}
Moreover 
  $ (E_R)^{-}  $ may not be an R-body.
 \end{lemma}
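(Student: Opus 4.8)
$$co_R(E) \subset E_R$$
and moreover $(E_R)^-$ may not be an $R$-body.

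Let me think about this.The plan is to establish the inclusion $co_R(E) \subset E_R$ by exploiting the representation $co_R(E) = (E'_R)'_R$ from part \textbf{a} of Proposition \ref{provedin8}, together with fact \textbf{5} of Proposition \ref{propoiniziale}. Indeed, fact \textbf{5} states precisely that for any non empty set, $(E'_R)'_R \subset E_R$. So the first equality rewrites the $R$-hulloid in the double-complement form, and the second inclusion delivers the claim directly. This is the cleanest route and leaves essentially nothing to compute.

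Alternatively, and perhaps more in the spirit of keeping the argument self-contained, I would argue pointwise through Lemma \ref{kincorE}. Suppose $k \notin E_R$, so that $\dist(E,k) \ge R$; then the open ball $B(k,R)$ of radius $R$ centered at $k$ satisfies $B(k,R) \cap E = \emptyset$, i.e. $B(k,R) \subset E^c$. Taking $x=k$ and $l=R$ in Lemma \ref{kincorE} exhibits an open ball of radius $\ge R$ through $k$ lying in $E^c$, which forces $k \notin co_R(E)$. Contrapositively, $co_R(E) \subset E_R$. I would probably present the one-line proof via Proposition \ref{provedin8}\,\textbf{a} and Proposition \ref{propoiniziale}\,\textbf{5} as the main argument, since it is immediate, and mention the pointwise version only if an explicit reason is wanted.

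For the second assertion, that $(E_R)^-$ need not be an $R$-body, the work is entirely in producing a counterexample, and this is the only genuinely nontrivial part. I would take $E$ to be a single point, say $E = \{0\}$ in $\E^2$. Then $E_R = B(0,R)$ is an open ball of radius $R$, so $(E_R)^- = \cl B(0,R) = \ovr{B}(0,R)$ is a closed ball of radius exactly $R$. A closed ball of radius $R$ is not an $R$-body: by Definition \ref{d1}, $R$-bodyhood requires that every exterior point be covered by an open ball of radius $R$ contained in the complement, but no open ball of radius $R$ can be tangent from outside to a sphere of the same radius $R$ without meeting the closed ball (two balls of equal radius $R$ that are externally tangent touch at a single point, and that point lies on the boundary of the closed ball, hence in $(E_R)^-$). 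More concretely, any point $y$ just outside $\ovr{B}(0,R)$, at distance slightly more than $R$ from the origin, cannot be enclosed in an open ball of radius $R$ avoiding $\ovr{B}(0,R)$, because such a ball would have its center at distance $\ge 2R$ from the origin while containing a point at distance only slightly more than $R$ — impossible. Thus the defining covering property of Definition \ref{d1} fails, and $(E_R)^-$ is not an $R$-body.

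The main obstacle is purely the geometric verification in the counterexample: one must check carefully that the closed ball of radius $R$ admits an exterior point not coverable by an open $R$-ball disjoint from it. The computation reduces to comparing distances of centers of two radius-$R$ balls, and the critical observation is the rigidity of the equal-radius tangency, which forbids any open $R$-ball from reaching boundary points of $\ovr{B}(0,R)$ from outside. I would make this quantitative by fixing a boundary point $p \in \pa B(0,R)$ and showing every open ball $B \subset \big((E_R)^-\big)^c$ stays at positive distance from $p$, so no $B$ can contain $p$; since $p \in \pa(E_R)^- \subset (E_R)^-$ this already obstructs $R$-bodyhood at neighboring exterior points.
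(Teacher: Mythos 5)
Your proof of the inclusion $co_R(E)\subset E_R$ is correct and is exactly the paper's argument: combine $co_R(E)=(E'_R)'_R$ (Proposition \ref{provedin8}\,\textbf{a}) with $(E'_R)'_R\subset E_R$ (Proposition \ref{propoiniziale}\,\textbf{5}). The pointwise variant via Lemma \ref{kincorE} is also fine.

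The counterexample for the second assertion, however, is wrong. With $E=\{0\}$ you get $(E_R)^-=(B(0,R))^-$, a closed \emph{convex} set, and every convex body is an $R$-body for every $R>0$ (the paper states this explicitly right after Remark \ref{R-hull=R-hulloid}). Your claimed obstruction fails: take $y$ with $|y|=R+\epsilon$, $0<\epsilon<R$, and set $c=2R\,y/|y|$. Then $\dist\big(c,(B(0,R))^-\big)=2R-R=R$, so $B(c,R)\cap (B(0,R))^-=\emptyset$, while $|c-y|=2R-(R+\epsilon)=R-\epsilon<R$, so $y\in B(c,R)$. The triangle inequality only gives $|c-y|\ge |c|-|y|=R-\epsilon$, not $|c-y|\ge R$ as your ``impossible'' step tacitly assumes; an open $R$-ball externally tangent to $\partial B(0,R)$ does reach every exterior point near the tangency point. (Also note that the tangency point itself lies in $(E_R)^-$, so it is not required to be covered.) The paper instead takes $E=(B(x_0,\rho))^c$ with $R<\rho<2R$; then $(E_R)^-=(B(x_0,\rho-R))^c$, the complement of an open ball of radius $\rho-R<R$, which is not an $R$-body because no open ball of radius $R$ containing a point of $B(x_0,\rho-R)$ can fit inside $B(x_0,\rho-R)$. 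The moral is that the construction must produce a complement that is ``too thin'' to contain an $R$-ball; outward-curving boundaries like your sphere never do this.
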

 \begin{proof} By {\bf  5} of  Proposition \ref{propoiniziale}, $(E'_R)'_R \subset E_R$ and by {\bf a} of Proposition \ref{provedin8},  the inclusion \eqref{coRCER} follows. Let $ x_0 \in  \E^d,$ $ R< \rho < 2R $ and let  $  E=(B(x_0,\rho))^c.   $
Then   $ (E_R)^{-} $  is  $(B(x_0, \rho -R))^c,  $  not an R-body.\end{proof}
 
\begin{theorem}\label{lemminopaolo} Let $E\subset \E^d$ be a body. 
Then
\begin{equation}\label{lemminopaoloeq}
co_R(E)=E_R\cap \Big(\pa(E_R)\Big)_R'.
\end{equation}
\end{theorem}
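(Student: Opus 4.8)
The plan is to reduce everything to the identity $co_R(E)=(E'_R)'_R$ from \textbf{a} of Proposition \ref{provedin8} together with one elementary fact about nearest points. Write $F:=E'_R$, so that $co_R(E)=F'_R=\{x:\mbox{dist}(F,x)\ge R\}$. The first observation is that $F=(E_R)^c$: indeed $E_R=\{x:\mbox{dist}(E,x)<R\}$ is open and its complement is precisely $\{x:\mbox{dist}(E,x)\ge R\}=E'_R$. In particular $F$ is closed, and since the topological boundary of a set coincides with that of its complement, $\pa F=\pa(E_R)$.

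The heart of the argument is the claim that for every $x\in F^c=E_R$ one has
\begin{equation}\label{distclaim}
\mbox{dist}(F,x)=\mbox{dist}(\pa F,x)=\mbox{dist}(\pa(E_R),x).
\end{equation}
Since $\pa F\subset F$ (as $F$ is closed), the inequality $\mbox{dist}(F,x)\le\mbox{dist}(\pa F,x)$ is immediate. For the reverse inequality I would pick a nearest point $y\in F$ with $|x-y|=\mbox{dist}(F,x)$, which exists because $F$ is closed; if $y$ were interior to $F$, then sliding $y$ slightly towards $x$ would remain in $F$ and strictly decrease the distance, a contradiction, so $y\in\pa F$ and hence $\mbox{dist}(\pa F,x)\le|x-y|=\mbox{dist}(F,x)$. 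This establishes \eqref{distclaim}.

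With \eqref{distclaim} in hand both inclusions are short. If $x\in co_R(E)=F'_R$, then $\mbox{dist}(F,x)\ge R>0$, so $x\notin F$, i.e. $x\in E_R$; applying \eqref{distclaim} gives $\mbox{dist}(\pa(E_R),x)=\mbox{dist}(F,x)\ge R$, whence $x\in E_R\cap(\pa(E_R))'_R$. Conversely, if $x\in E_R\cap(\pa(E_R))'_R$, then $x\in F^c$, so \eqref{distclaim} yields $\mbox{dist}(F,x)=\mbox{dist}(\pa(E_R),x)\ge R$, i.e. $x\in F'_R=co_R(E)$. The two inclusions give \eqref{lemminopaoloeq}. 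The only point requiring separate care is the degenerate case $F=\emptyset$ (equivalently $E_R=\E^d$), in which both sides reduce to $\E^d$ under the usual convention $\mbox{dist}(\emptyset,\cdot)=+\infty$; I expect no genuine obstacle there, the entire difficulty being concentrated in the nearest-point identity \eqref{distclaim}, which is routine.
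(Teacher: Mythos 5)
Your proof is correct, and it takes a genuinely different route from the paper's. The paper works entirely on the complement side: it rewrites \eqref{lemminopaoloeq} as $(co_R(E))^c=E'_R\cup(\pa(E_R))_R$, gets one inclusion from $co_R(E)\subset E_R$ together with $(\pa(E_R))_R=\cup\{B(x):\dist(x,E)=R\}$, and gets the other by expressing $(co_R(E))^c$ as the union of all balls $B(x)$ with $B(x)\cap E=\emptyset$ and splitting into the cases $\dist(x,E)=R$ and $\dist(x,E)>R$. You instead start from Perkal's identity $co_R(E)=(E'_R)'_R$ and reduce everything to the elementary nearest-point fact $\dist(F,x)=\dist(\pa F,x)$ for $x\notin F$, applied to the closed set $F=E'_R=(E_R)^c$ with $\pa F=\pa(E_R)$; all the steps (existence of a nearest point, its membership in $\pa F$, the two resulting inclusions, and the degenerate case $E'_R=\emptyset$) check out. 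What your route buys is that it avoids the paper's case analysis altogether, and in particular it sidesteps the delicate final step there: the assertion that $\dist(x,E)>R$ forces $B(x)\subset E'_R$ is not literally true (a point of $B(x)$ on the far side from $x$ can be closer than $R$ to $E$; one must instead argue that such a point is within distance $R$ of the level set $\{\dist(\cdot,E)=R\}=\pa(E_R)$ by continuity of the distance function). What the paper's approach buys is that it stays closer to the geometric picture behind Definition \ref{d+1}, exhibiting $(co_R(E))^c$ explicitly as a union of two families of balls, which is the form used elsewhere (e.g.\ in the proof of Theorem \ref{l1}). Your argument is the cleaner of the two and relies only on Proposition \ref{provedin8}\,\textbf{a} plus metric generalities.
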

\begin{proof} 
Formula \eqref{lemminopaoloeq} can also be written as:
\begin{equation}\label{lemminopaoloeq2}
(co_R(E))^c=E'_R\cup \Big(\pa(E_R)\Big)_R.
\end{equation}
Let $\Om= E'_R\cup \Big(\pa(E_R)\Big)_R$.

Inclusion \eqref{coRCER} implies that $E_R' \subset (co_R(E))^c$. Let us notice that: 
$$\Big(\pa(E_R)\Big)_R=\cup \{B(x): x\in \pa(E_R)\}=\cup \{B(x): \dist(x,E)=R\},$$
then
\begin{equation}\label{pacoR(E)}
\Big(\pa(E_R)\Big)_R\subset \cup \{B(x): \dist(x,E)\geq R\}=(co_R(E))^c.
\end{equation}

 Then  
from \eqref{pacoR(E)}: 
$$\Om\subset (co_R(E))^c$$
holds too.

 The open set $(co_R(E))^c$ is the union of the balls $B(x)$, satisfying $B(x) \cap E = \emptyset $; clearly $\dist (x,E) \geq R$; if $\dist(x,E) =R$ then $x\in \pa(E_R)$ and $B(x) \subset \Big(\pa(E_R)\Big)_R$; if $\dist (x,E)>R$ , then $B(x) \subset E'_R$. Therefore
$$(co_R(E))^c \subset \Om.$$

Then $\Om= (co_R(E))^c$.\end{proof}

\begin{rem}The previous theorem is the analogous,  for the $R$-hulloid, of the  property of the convex hull of a body $E$: $co(E)$ is the intersection of all closed half spaces  supporting $E$.
\end{rem}

If $E$ is a compact set, part of the following theorem has been proved in  \cite[Proposition 2]{Cue}.

  \begin{theorem}\label{l1}
 Let $ E $ be a body, $ k \in co_R(E)$,   $l=inf_ {x \in  E'_R } | k-x |=\dist(k, E'_R).$ Then $ l  $ is a minimum and $ l \ge R. $ Moreover $ l= R    $ if and only if  $ k \in \partial co_R(E)  $ and there exists $ x_0 \in E'_R  $ satisfying $ B(x_0) \subset E^c, \; \partial B(x_0) \ni k . $
\end{theorem}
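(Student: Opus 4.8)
The plan is to reduce everything to the identity $co_R(E)=(E'_R)'_R$ from Proposition \ref{provedin8}(a) together with the definition of the operation $(\cdot)'_R$. First I would observe that the bound $l\ge R$ is essentially built into the hypothesis: by definition $(E'_R)'_R=\{x:\dist(x,E'_R)\ge R\}$, so the assumption $k\in co_R(E)=(E'_R)'_R$ says precisely that $\dist(k,E'_R)=l\ge R$. Here one tacitly assumes $E'_R\neq\emptyset$; otherwise no radius-$R$ ball misses $E$, so $co_R(E)=\E^d$ and $l=+\infty$, a degenerate case to be noted and set aside.

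Next I would establish that the infimum is attained. Since $x\mapsto\dist(x,E)$ is continuous, $E'_R=\{\dist(\cdot,E)\ge R\}$ is closed; taking a minimizing sequence $x_n\in E'_R$ with $|k-x_n|\to l$, boundedness of the sequence (as $l<\infty$) yields a convergent subsequence whose limit $x_0$ lies in $E'_R$ and satisfies $|k-x_0|=l$. Thus $l$ is a minimum, realized at some $x_0\in E'_R$.

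For the characterization of the case $l=R$ I would argue both implications through this minimizer. If $l=R$, the minimizer satisfies $|k-x_0|=R$, i.e. $k\in\pa B(x_0)$; moreover $x_0\in E'_R$ means $\dist(x_0,E)\ge R$, so $B(x_0)\cap E=\emptyset$ and hence $B(x_0)\subset E^c$. It remains to see $k\in\pa co_R(E)$: since $B(x_0)$ is a radius-$R$ ball disjoint from $E$, the very definition of $co_R(E)$ gives $co_R(E)\subset B(x_0)^c$, so $B(x_0)\subset(co_R(E))^c$; as $k\in\pa B(x_0)$ is a limit of points of $B(x_0)\subset(co_R(E))^c$ while $k\in co_R(E)$, the point $k$ cannot be interior to $co_R(E)$, whence $k\in\pa co_R(E)$. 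Conversely, given any $x_0\in E'_R$ with $k\in\pa B(x_0)$ we have $|k-x_0|=R$, so $l=\dist(k,E'_R)\le R$; combined with $l\ge R$ from the first step this forces $l=R$ (the boundary hypothesis is not even needed for this direction).

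The computations are short; the only points requiring care are the attainment argument (ensuring $E'_R$ is nonempty, using its closedness, and extracting the limit) and the topological step identifying $k$ as a boundary point, namely recognizing that an open $R$-ball through $k$ that misses $E$ lies entirely in the complement of $co_R(E)$, so that $k$, lying in $co_R(E)$ yet on the closure of that complement, must belong to $\pa co_R(E)$. I expect this last identification to be the main conceptual step, everything else being a direct unwinding of the definitions.
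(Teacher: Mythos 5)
Your proposal is correct and follows essentially the same route as the paper: derive $l\ge R$ from the representation of $co_R(E)$ as an intersection of complements of radius-$R$ balls centered on $E'_R$ (equivalently $(E'_R)'_R$), attain the infimum via a minimizing sequence in the closed set $E'_R$, and in the case $l=R$ note that $B(x_0)\subset (co_R(E))^c$ forces $k\in\pa B(x_0)$ to be a boundary point of $co_R(E)$. Your write-up is in fact more complete than the paper's, since you also spell out the converse implication and the degenerate case $E'_R=\emptyset$, which the published proof leaves implicit.
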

 
 \begin{proof}
 As $ co_R(E) = \cap\{B^c: B^c \supset E\},   $ then $\dist(E'_R,co_R(E)) \ge R.$  Let $ x_n \in  E'_R $ satisfying $ | x_n - k| \rightarrow l \ge R; $ by possibly passing to a subsequence, one can assume that $ x_n \rightarrow x_0  \in   E'_R, $ where $ |x_0-k| = l. $ 
 If  $|x_0 - k |= R $  then $ k \in  co_R(E) \cap \partial B(x_0)   $. As $ l=R $, it  cannot be  $ k \in int( co_R(E))$.  Therefore the thesis holds.   \end{proof}

 \begin{theorem}\label{l2}
  Let $ E $ be a body, $\; k \in \partial co_R(E).$ Then there exists $ B \subset E^c $ satisfying $ k \in \partial B$. Moreover if $\mathfrak{F}=\{B \subset E^c: \pa B \cap co_R(E)\neq \emptyset\}$, then $\mathfrak{F}$ is not empty and if $B \in \mathfrak{F}$ then $\pa B\cap E \neq\emptyset$.
   \end{theorem}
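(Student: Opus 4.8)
The plan is to read the first assertion off Theorem \ref{l1} after a short boundary argument, and then to settle the two statements about $\mathfrak{F}$ by enlarging balls and invoking Lemma \ref{kincorE}.

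First, since $co_R(E)=(E'_R)'_R=\{x:\dist(x,E'_R)\ge R\}$ by part \textbf{a} of Proposition \ref{provedin8}, a point lies in $co_R(E)$ exactly when its distance $l:=\dist(\cdot,E'_R)$ to $E'_R$ is $\ge R$. I would first check that for our $k\in\pa co_R(E)$ one has $l=R$: if instead $l>R$, then every $k'$ with $|k'-k|<l-R$ satisfies $\dist(k',E'_R)\ge l-|k'-k|>R$, so $k'\in co_R(E)$, and therefore $k\in\inte co_R(E)$, contradicting $k\in\pa co_R(E)$. With $l=R$ in hand, the characterization in Theorem \ref{l1} supplies a center $x_0\in E'_R$ with $B(x_0)\subset E^c$ and $k\in\pa B(x_0)$; this is the ball $B$ required by the first assertion.

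Nonemptiness of $\mathfrak{F}$ is then immediate: the ball $B(x_0)$ just produced satisfies $B(x_0)\subset E^c$, and since $k\in\pa co_R(E)\subset co_R(E)$ we have $k\in\pa B(x_0)\cap co_R(E)$, so $B(x_0)\in\mathfrak{F}$.

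Finally, for an arbitrary $B=B(x)\in\mathfrak{F}$ I would prove $\pa B\cap E\neq\emptyset$ by contradiction. Assume $\pa B\cap E=\emptyset$; since also $B\cap E=\emptyset$, the compact ball $\overline{B}$ is disjoint from the closed set $E$, so $\dist(x,E)>R$. Choosing $p\in\pa B\cap co_R(E)$ we have $|x-p|=R$, and for any $\delta>0$ with $R+\delta<\dist(x,E)$ the ball $B(x,R+\delta)$ has radius $\ge R$, is contained in $E^c$, and contains $p$ in its interior; Lemma \ref{kincorE} then forces $p\notin co_R(E)$, contradicting the choice of $p$. Hence $\pa B\cap E\neq\emptyset$. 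The only genuine point requiring care is the boundary argument establishing $l=R$; once that is in place, the result reduces to Theorem \ref{l1} together with the elementary enlargement licensed by Lemma \ref{kincorE}.
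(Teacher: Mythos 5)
Your proof is correct and follows essentially the same route as the paper's: Theorem \ref{l1} supplies the ball $B(x_0)$ through $k$, which immediately shows $\mathfrak{F}\neq\emptyset$, and the final claim is obtained by observing that $\pa B\cap E=\emptyset$ forces $\dist(x,E)>R$ and then using the minimality of $co_R(E)$ (your appeal to Lemma \ref{kincorE} and the paper's inclusion $co_R(E)\subset B(x,R_1)^c$ are the same argument in different clothing). Your explicit check that $\dist(k,E'_R)=R$ for $k\in\pa co_R(E)$, needed to apply the ``only if'' direction of Theorem \ref{l1}, is a small step the paper leaves implicit, and it is a welcome addition.
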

 \begin{proof}  
 If $  k \in \partial co_R(E), $ by previous theorem there exists $ x_0 \in E'_R  $ with the property $ B(x_0) \subset E^c, \; \partial B(x_0) \ni k . $ If $\dist (x_0, E)= l> R  $ then $k \in B^1= B(x_0,l)\subset E^c$, this is impossible by Lemma
\ref{kincorE} and $\mathfrak{F}$ is non empty. 
Let  $B(x) \in \mathfrak{F}$ and, by contradiction, let $\pa B \cap E= \emptyset$; then, $R_1= \dist (x,E) > R$. Thus $B(x,R_1)^c $ is an $R$-body containing $E$, then  $co_R(E) \subset B(x,R_1)^c$; as $\pa B(x) \subset B(x,R_1)$ so $\pa B(x)\cap co_R(E)= \emptyset$, contradiction with $B(x) \in \mathfrak{F}$.
  \end{proof}

 \begin{corollary}\label{l3}
 Let $ A$ be an R-body. Then : \par
  (i) 
  $ \Xi(A) := \{ x: B(x) \subset A^c \}  $ (the set of centers of balls of radius R contained in $A^c$)
 is closed; \par (ii) $\forall y \in \partial A, $ there exists $ x_0 \in \Xi(A) $ with the property: $ y\in \partial B(x_0). $
 \end{corollary}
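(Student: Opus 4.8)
The plan is to treat the two parts separately, each reducing quickly to material already in place.

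For part (i), I would first reinterpret $\Xi(A)$ in terms of distance. Since $B(x)=B(x,R)$ is the open ball of radius $R$ centered at $x$, the condition $B(x)\subset A^c$ is the same as $B(x)\cap A=\emptyset$, which in turn is equivalent to $\dist(x,A)\ge R$ (the inequality is non-strict precisely because the ball is open). Hence $\Xi(A)=\{x:\dist(x,A)\ge R\}=A'_R$, and by item \textbf{3} of Proposition \ref{propoiniziale} this set equals $\cap_{a\in A}B(a,R)^c$. Being an intersection of complements of open balls, that is, of closed sets, $\Xi(A)$ is closed. (Equivalently, the map $x\mapsto\dist(x,A)$ is continuous and $\Xi(A)$ is the preimage of the closed half-line $[R,\infty)$.)

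For part (ii), I would exploit that an $R$-body coincides with its own $R$-hulloid: by Remark \ref{R-hull=R-hulloid}, $A=co_R(A)$, and in particular $\partial A=\partial co_R(A)$. Thus, given $y\in\partial A$, I apply Theorem \ref{l2} with $E=A$, obtaining a ball $B\subset A^c$ with $y\in\partial B$. Writing $B=B(x_0)$ for its center $x_0$, the inclusion $B(x_0)\subset A^c$ says exactly that $x_0\in\Xi(A)$, while $y\in\partial B(x_0)$ is the remaining assertion, so the claim follows.

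There is essentially no deep obstacle, since both parts are carried by the machinery already developed. The only point demanding care is the open/closed distinction in (i): one must check that $B(x)\subset A^c$ corresponds to the \emph{non-strict} inequality $\dist(x,A)\ge R$, so that $\Xi(A)$ is genuinely closed and coincides with $A'_R$ rather than with its interior. In (ii) the whole content lies in Theorem \ref{l2}, and the only additional step is the harmless identification of a ball $B\subset A^c$ with the membership of its center in $\Xi(A)$.
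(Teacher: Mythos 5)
Your proposal is correct, and part (ii) is exactly the paper's argument: the authors also dispose of it with the single line ``(ii) follows by Theorem \ref{l2}'', the only content being the identification $A=co_R(A)$ and the reading of $B\subset A^c$ as $x_0\in\Xi(A)$, which you spell out. For part (i) you take a mildly different route. The paper argues directly with sequences: it takes an accumulation point $x_0$ of $\Xi(A)$, a sequence $x_n\to x_0$ in $\Xi(A)$, and shows that any $b\in B(x_0)$ satisfies $|b-x_n|<R$ for large $n$, hence $b\in B(x_n)\subset A^c$, so $B(x_0)\subset A^c$. You instead observe that $B(x)\subset A^c$ is equivalent to $\dist(x,A)\ge R$, so that $\Xi(A)=A'_R=\cap_{a\in A}B(a,R)^c$ by item \textbf{3} of Proposition \ref{propoiniziale}, which is closed as an intersection of closed sets. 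The two arguments are equivalent in substance; yours is slightly more economical, correctly isolates the one delicate point (that openness of $B(x)$ yields the non-strict inequality, hence a closed rather than open sublevel condition), and makes visible that (i) holds for an arbitrary body $A$, the $R$-body hypothesis being needed only for (ii).
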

\begin{proof}  
 Let $ x_0 $  be an accumulation point of $ \Xi(A)$ and $  \Xi(A) \ni x_n \rightarrow x_0;$ let $ b\in B(x_0),$ then $  \lim_{n \rightarrow \infty}|b- x_n| = |b- x_0| $ where  $ |b-x_0| <R.  $ Thus for  $n $ sufficiently large $ | b- x_n|<R,  $ therefore  $ b\in B(x_n) \subset A^c, \forall b\in B(x_0) $.  Then $ B(x_0) \subset A^c, $ $ x_0 \in \Xi (A)  $ and (i) holds.
 
(ii) follows by Theorem \ref{l2}.
\end{proof}

\begin{lemma}\label{r5.11}
	Let $ A $ be a body; if $A^c$  is union of closed balls of radius $ R $, then $ A $ is an $R$-body. 
\end{lemma}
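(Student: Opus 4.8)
The plan is to establish the identity $co_R(A)=A$, from which membership in the class of $R$-bodies follows by Remark~\ref{R-hull=R-hulloid}. Since the inclusion $A\subset co_R(A)$ always holds (Remark~\ref{R-hull=R-hulloid}), it suffices to prove the reverse inclusion $co_R(A)\subset A$, i.e.\ that every $y\in A^c$ satisfies $y\notin co_R(A)$. Here I would invoke Lemma~\ref{kincorE} with $E=A$: the condition $y\notin co_R(A)$ is equivalent to the existence of an open ball $B(x,l)$ with $l\ge R$, $y\in B(x,l)$ and $B(x,l)\subset A^c$. Thus the entire argument reduces to producing, for each prescribed $y\in A^c$, a single such ball of radius at least $R$ lying in the complement. (If $A^c=\emptyset$, i.e.\ $A=\E^d$, the claim is trivial, so one may assume $A^c\neq\emptyset$.)

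Fix $y\in A^c$. By the hypothesis that $A^c$ is a union of closed balls of radius $R$, there is a centre $x_0$ with $y\in\overline{B}(x_0,R)\subset A^c$, where $\overline{B}(x_0,R)$ denotes the corresponding closed ball. The decisive observation is that $\overline{B}(x_0,R)$ is compact and disjoint from the closed set $A$, so that $\delta:=\dist(\overline{B}(x_0,R),A)>0$. Because every point within distance $\delta$ of $\overline{B}(x_0,R)$ avoids $A$, the dilated open ball $B(x_0,R+\delta)$ satisfies $B(x_0,R+\delta)\subset A^c$, while it still contains the whole closed ball and in particular the point $y$. Applying Lemma~\ref{kincorE} with this ball, whose radius $l=R+\delta$ exceeds $R$, gives $y\notin co_R(A)$. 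As $y\in A^c$ was arbitrary, $co_R(A)\subset A$, hence $co_R(A)=A$ and $A$ is an $R$-body.

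The only genuine subtlety, and the exact point at which the \emph{closedness} of the balls is exploited, is the passage from a closed ball inside $A^c$ to an open ball of radius \emph{strictly} larger than $R$ still inside $A^c$. I expect this to be the main obstacle, and it is worth recording why the obvious alternative does not work: one cannot simply translate the centre of $\overline{B}(x_0,R)$ slightly towards $y$ in order to move $y$ into the open interior, because such a shift sweeps out a thin crescent covering nearly an entire hemisphere of $\partial B(x_0,R)$, and this crescent may well meet $A$. The correct mechanism is dilation rather than translation, and its legitimacy rests precisely on the strict positivity of $\dist(\overline{B}(x_0,R),A)$ — the additional piece of information automatically available when a compact (closed) ball sits inside the open set $A^c$. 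This is also why the analogous statement for open balls would be immediate from the definition, whereas the closed-ball version requires this compactness step.
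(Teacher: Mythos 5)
Your proposal is correct and uses the same key idea as the paper: since the closed ball of radius $R$ containing $y$ is compact and disjoint from the closed set $A$, their distance is positive, so the ball can be dilated to an open ball of radius strictly greater than $R$ still inside $A^c$. The only (cosmetic) difference is in how you conclude — you invoke Lemma~\ref{kincorE} to get $y\notin co_R(A)$, whereas the paper extracts from the dilated ball an open ball of radius exactly $R$ containing $y$ and verifies Definition~\ref{d1} directly.
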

\begin{proof}
	For every $ y \in A^c $ there exists $(B(z))^- \subset A^c$,   $ y \in (B(z))^- $. As $A$ and  $ (B(z))^-  $ are closed and disjoint, there exists $  R_1 > R $ so that $B(z,R_1) \subset A^c $. Then there exists a ball $ B \subset A^c, B\ni y $. Thus $ A^c $ is union of open balls of radius $ R $ 
	and A is an $R$-body. \end{proof} 
Let us notice that there exist  $R$-bodies $A$ such that $A^c$ is  not union of closed balls of radius $R$. As example, let $A=B^c$.

\begin{theorem}\label{r5.12}
	Let $ A $ be a body, not an R-body. Then there exists $ y_0 \in A^c $ such that 
	$y_0$ belongs to no closed ball  of radius $ R $, contained in $  A^c $.
\end{theorem}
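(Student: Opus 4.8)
The plan is to prove the contrapositive of the statement's natural reformulation: I want to show that if $A$ is a body such that every point of $A^c$ lies in some closed ball of radius $R$ contained in $A^c$, then $A$ is an $R$-body. This is exactly Lemma \ref{r5.11}, which is already established in the excerpt. Thus Theorem \ref{r5.12} follows immediately by contraposition: assume $A$ is a body but not an $R$-body; if every $y \in A^c$ belonged to some closed ball of radius $R$ contained in $A^c$, then Lemma \ref{r5.11} would force $A$ to be an $R$-body, a contradiction. Hence there must exist at least one $y_0 \in A^c$ belonging to no closed ball of radius $R$ inside $A^c$.

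First I would state the logical structure explicitly: the hypothesis of Lemma \ref{r5.11}, namely ``$A^c$ is a union of closed balls of radius $R$,'' is logically equivalent to the statement ``every $y \in A^c$ belongs to some closed ball of radius $R$ contained in $A^c$.'' Indeed, $A^c$ being such a union means each of its points is covered by one of the balls in the union, and conversely a pointwise covering exhibits $A^c$ as the union of the covering balls. So the negation of that hypothesis is precisely the existence of a point $y_0 \in A^c$ lying in no closed ball of radius $R$ contained in $A^c$, which is the conclusion of Theorem \ref{r5.12}.

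The second and final step is to invoke Lemma \ref{r5.11} in contrapositive form. Since ``$A^c$ is a union of closed balls of radius $R$'' implies ``$A$ is an $R$-body,'' the failure of the conclusion (i.e.\ $A$ not an $R$-body) implies the failure of the hypothesis, giving the desired $y_0$. There is essentially no computation here; the whole content is a clean logical reorganization of the already-proved Lemma \ref{r5.11}.

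I do not anticipate a genuine obstacle, since the theorem is a direct contrapositive restatement of Lemma \ref{r5.11}. The only point requiring minor care is the equivalence between the ``union of balls'' phrasing and the ``every point lies in a ball'' phrasing, and in particular making sure the ball in question is required to be \emph{contained} in $A^c$ rather than merely to have its center outside $A$; but this is built directly into both the lemma and the theorem statements, so the matching is exact. Therefore the proof is a one-line appeal to Lemma \ref{r5.11} together with this elementary set-theoretic equivalence.
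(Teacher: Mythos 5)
Your proposal is correct and matches the paper's own argument exactly: the paper also proves Theorem \ref{r5.12} by assuming every point of $A^c$ lies in a closed ball of radius $R$ contained in $A^c$, noting that $A^c$ is then a union of such balls, and invoking Lemma \ref{r5.11} to reach a contradiction. Your explicit remark on the equivalence of the ``union of balls'' and ``every point lies in a ball'' phrasings is a minor but welcome clarification of the same reasoning.
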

\begin{proof} 
	By contradiction, let us assume that every $ y \in A^c $ is contained in a closed ball of radius $ R $ contained in $ A^c$,  then $ A^c$ is union of closed balls of radius $ R $ and satisfies the hypothesis of Lemma \ref{r5.11}, then $ A $ is an R-body. Impossible.
\end{proof}

Let  $\mathcal{C}^d$ be the metric space of  the compact bodies in $\E^d $ with the Hausdorff distance $ \delta_H(F,G):= $ min $\{\epsilon \ge 0: F \subset G_{\epsilon},$ $  G \subset F_{\epsilon}\}.   $

From a bounded sequence in  $\mathcal{C}^d$ one can select a convergent subsequence  in the Hausdorff  metric (see e.g.  \cite[Theorem 1.8.4]{Schn}).

Let  $\mathcal{R}^d = \{ A\subset \mathcal{C}^d : A $ is  an  R-body $  \} $. 
  Let $  A\subset  \E^d $ be a body, $ \epsilon >0 $. Let $ A_{\epsilon}^-:= \{x\in E^d: dist(A,x)\le\epsilon\}=(A_\epsilon)^-.  $  $D =B^-$ will be any closed ball of radius $R$.
\begin{theorem}\label{compatRn}  Let  $ A^{(n)} $ be a sequence of compact R-bodies; let us assume   that $A^{(n)} \rightarrow A \in \mathcal{C}^d$ in the Hausdorff metric. Then, $A$ is an $R_\epsilon$-body 
  for every $0 <R_\epsilon  < R $.	
 \end{theorem}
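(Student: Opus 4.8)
The plan is to fix $r:=R_\epsilon\in(0,R)$, set $\epsilon:=R-r>0$, and show that the limit $A$ is an $r$-body. Since each $A^{(n)}$ is an $R$-body it is also an $r$-body (Proposition \ref{provedin8}(h)), so by Proposition \ref{provedin8}(a) being an $r$-body means $A=(A'_r)'_r$; equivalently, every $y\in A^c$ must lie in the interior of some closed ball of radius $r$ contained in $A^c$. I would reduce to producing such \emph{closed} $r$-balls, because the enlargement argument of Lemma \ref{r5.11} (applied with $r$ in place of $R$) then upgrades them to open balls and yields the $r$-body property. So the goal becomes: for each $y\in A^c$, find a closed ball of radius $r$ inside $A^c$ having $y$ in its interior.

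First I would set up the pointwise limiting construction. Fix $y\in A^c$; since $A$ is closed, $\delta:=\dist(y,A)>0$. Writing $\delta_n:=\delta_H(A^{(n)},A)\to 0$ and using $|\dist(y,A^{(n)})-\dist(y,A)|\le\delta_n$, for large $n$ we get $\dist(y,A^{(n)})\ge\delta-\delta_n>0$, so $y\in(A^{(n)})^c$. As $A^{(n)}$ is an $R$-body there is a center $w_n$ with $y\in B(w_n)\subset(A^{(n)})^c$, that is $|y-w_n|<R$ and $\dist(w_n,A^{(n)})\ge R$, whence $\dist(w_n,A)\ge R-\delta_n$. The $w_n$ are bounded, so along a subsequence $w_n\to w_0$ with $\dist(w_0,A)\ge R$ and $|y-w_0|\le R$, giving $B(w_0)\subset A^c$.

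Next comes the shrinking from radius $R$ to radius $r$. Assume for the moment that $|y-w_0|<R$ strictly. Moving the center along the segment $[w_0,y]$, set $c=w_0+t\,(y-w_0)/|y-w_0|$, so that $\dist(c,A)\ge R-t$ and $|y-c|=|y-w_0|-t$. Any choice of $t$ in the range $|y-w_0|-r<t<\epsilon$ — nonempty precisely because $|y-w_0|<R=r+\epsilon$ — gives $\dist(c,A)>r$ together with $|y-c|<r$, i.e.\ a closed ball $\overline{B(c,r)}\subset A^c$ with $y$ in its interior. This is exactly where the positive slack $\epsilon$ is spent: it lets the clearance strictly exceed $r$.

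The hard part will be the degenerate case $|y-w_0|=R$, which forces $\dist(w_0,A)=R$ and puts $y\in\partial B(w_0)$; then the segment construction collapses and no $r$-ball through $y$ can be recovered from $B(w_0)$ alone. This is precisely the phenomenon that makes the limit fail to be an $R$-body, so the whole point is to avoid it for $r<R$. To do so I would not take an arbitrary covering ball in the second step, but instead select $w_n$ so that $y$ sits at depth at least $\dist(y,A^{(n)})$: letting $a_n$ be a nearest point of $A^{(n)}$ to $y$ and $u_n=(y-a_n)/|y-a_n|$, I would use the $R$-ball that rolls along the inward normal $u_n$, whose existence at the boundary point $a_n$ is supplied by the $R$-body structure (Corollary \ref{l3}). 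This should give $|y-w_n|\le R-\dist(y,A^{(n)})$, hence in the limit $|y-w_0|\le R-\delta<R$ strictly, after which the shrinking step applies. Verifying that this tangent ball really inflates to radius $R$ inside $(A^{(n)})^c$ without meeting $A^{(n)}$ — equivalently, that the nearest-point direction is a genuine rolling direction and is not blocked at some radius below $R$ — is the technical heart of the argument, and I expect it to be the step demanding the most care; it is also what genuinely distinguishes $r<R$ from the excluded value $r=R$.
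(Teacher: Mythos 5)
Your setup and your treatment of the case $|y-w_0|<R$ coincide with the paper's proof, which runs the same compactness argument by contradiction via Theorem \ref{r5.12}. The problem is the degenerate case $|y-w_0|=R$, which you single out as the heart of the matter: both your diagnosis and your proposed cure are wrong. First, the diagnosis. An $r$-ball through $y$ \emph{can} be recovered from $B(w_0)$ alone, namely the closed ball of radius $r=R_\epsilon$ internally tangent to $\partial B(w_0)$ at $y$: it is contained in $B(w_0)\cup\{y\}\subset A^c$ and contains $y$ on its boundary. By your own reduction through Lemma \ref{r5.11} (equivalently, Theorem \ref{r5.12}) it suffices that $y$ belong to the closed $r$-ball, not to its interior; the extra ``interior'' requirement you impose is what manufactures the apparent difficulty. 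This tangent ball is exactly how the paper closes the argument.

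Second, the cure is unsound. The claim that the nearest-point direction $u_n$ is a rolling direction for the $R$-body $A^{(n)}$ --- i.e.\ that $B(a_n+Ru_n)\subset (A^{(n)})^c$, hence $|y-w_n|\le R-\dist(y,A^{(n)})$ --- is false, and Corollary \ref{l3} does not supply it: it produces a tangent $R$-ball at $a_n$ in \emph{some} direction, with no control on that direction and no guarantee that the ball contains $y$. A minimal counterexample: $A=\{p,q\}\subset\E^2$ with $|p-q|$ small is an $R$-body by Theorem \ref{r4}; for $y$ equidistant from $p$ and $q$ at distance $\delta<R$ one computes $|q-(p+Ru)|<R$ for $u=(y-p)/\delta$, and in fact no admissible center $w$ can satisfy $|y-w|\le R-\delta$, since together with $|w-p|\ge R$, $|w-q|\ge R$ this forces equality in two triangle inequalities simultaneously. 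The paper's own example (Theorems \ref{lemmasigmao} and \ref{nocompattezzained}) refutes the quantitative claim outright: there $\dist(0,W^{(n)})\to 2R/3>0$ while every admissible center for $y=0$ has distance from $0$ tending to $R$. So the bound $|y-w_0|\le R-\delta<R$ you aim for does not hold in general, and the proof must pass through the tangency case rather than around it.
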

 \begin{proof}		
	 By contradiction, let us assume  that $ A $  it is not an $R_\epsilon$-body. Then by Theorem \ref{r5.12}, there exists $ y_0 \in A^c $ with the property 
	 \begin{equation}\label{propy0}
	 \mbox {$y_0 $ belongs to no  closed ball, of radius $R_\epsilon $, subset of $ A^c$.} 
\end{equation}

	 As $\dist(y_0,A) > 0$ , then     $ y_0\in (A_{\sigma})^c $ for suitable $ \sigma >0.$ 	 
	 As $ A^{(n)} \rightarrow A $ in the Hausdorff metric,  there exists a sequence $ \epsilon_n \rightarrow 0^+ $ satisfying $ A^{(n)} \subset A_{\epsilon_n}$, $ A \subset A_{\epsilon_n} . $ 	   
	     For n sufficiently large $(A_{\sigma})^c \subset (A_{\epsilon_n})^c \;   $
	      and  $ y_0 \in (A_{\epsilon_n})^c \subset ( A^{(n)})^c. $  As $ A^{(n)} \in \mathcal{R}^d, $ then there exist open balls $  B({x_n}) $ satisfying $  y_0 \in  B({x_n}) \subset ( A^{(n)}) ^c;  $ then
	       $\dist (x_n,A^{(n)})\ge R, \;$ $\dist (x_n,A)\ge R- \epsilon_n$ .
	 
	  As $ |x_n - y_0| < R,   $ 
	 by possibly passing to a subsequence,   $ x_n \rightarrow x_0 \in  \E^d.$
	 The point $ x_0 $ satisfies: $ |x_0 - y_0| \leq R ,$  $\dist (x_0,A)\ge R.$	 
 Then $B({x_0})\subset A^c$ and $ D:=(B({x_0}))^- $ is a closed ball of radius $ R $ containing $y_0 $. If $y_0\in B({x_0})$, then $D_\rho =B^-(x_0, \rho) $, with  $\rho = max\{|y_0-x_0|, R_\epsilon \}$ is a closed ball which provides a contradiction with \eqref{propy0}. In case $y_0\in \pa B({x_0})$ the closed ball enclosed in $D$,  tangent to $ \pa B({x_0})$ at $y_0$, with radius $R_\epsilon $, provides a contradiction with property  \eqref{propy0}. \end{proof}

 \begin{rem}In section \ref{seccon} it will be  shown that in $\E^3$  a limit (in Hausdorff metric) of a sequence of $R$-bodies may  be not an $R$-body. In Corollary \ref{liminR2} it will be  proved that,  in $\E^2$, a limit of a sequence of $R$-bodies (in Hausdorff metric) is an $R$-body too.
 
 \end{rem}
  
  \begin{theorem}\label{r5}
  
  Let  $ \Sigma=\pa B(r) \subset \E^d $  be a sphere of radius $ r \ge R$ and let   $ E $ be a body subset of $ \Sigma. $  Then $ E $ is an R-body.
  \end{theorem}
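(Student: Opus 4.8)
The plan is to verify the defining property of an $R$-body directly from Definition \ref{d1}: I must show that every point $y\in E^c$ lies in some open ball $B$ of radius $R$ with $B\subset E^c$. Place the centre $z_0$ of the sphere $\Sigma=\pa B(z_0,r)$ at the origin, and split the argument according to whether $y$ lies off the sphere or on it. For points off the sphere I will produce a ball disjoint from the \emph{whole} of $\Sigma$ (hence from $E\subset\Sigma$); the delicate case is $y\in\Sigma\setminus E$, where the ball must meet $\Sigma$ but avoid $E$.

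First I would treat $y\notin\Sigma$ by placing the centre $x$ on the ray from $z_0$ through $y$, so that $\dist(x,\Sigma)\ge R$ and $|x-y|<R$. If $|y|>r$, any $x=t\,y/|y|$ with $t$ in the nonempty interval $(\max(|y|-R,\,r+R),\,|y|+R)$ works, since then $|x|\ge r+R$ gives $\dist(x,\Sigma)=|x|-r\ge R$; if $|y|<r$, the hypothesis $r\ge R$ lets me choose $x$ with $|x|\le r-R$ (taking $x=y$ when $|y|\le r-R$, and $x=(r-R)\,y/|y|$ otherwise), so that $\dist(x,\Sigma)=r-|x|\ge R$. In every subcase $B(x,R)\cap\Sigma=\emptyset$, so $y\in B(x,R)\subset E^c$.

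The main obstacle is the case $y\in\Sigma\setminus E$. Since $E$ is closed and $y\notin E$, fix $\delta>0$ with $B(y,\delta)\cap E=\emptyset$; it then suffices to find a radius-$R$ ball $B(x)\ni y$ whose trace on $\Sigma$ is contained in $B(y,\delta)$. Normalizing $y=r e_d$, I take the centre $x=c\,e_d$ with $c=r+R-\eta$ for small $\eta>0$, a slightly displaced externally tangent position. Then $|y-x|=R-\eta<R$, so $y$ is interior to $B(x,R)$, and a direct computation gives
\begin{equation}\label{capcomp}
B(x,R)\cap\Sigma=\Big\{p\in\Sigma:\ p_d>f(c)\Big\},\qquad f(c)=\frac{r^2+c^2-R^2}{2c},
\end{equation}
where $f(r+R)=r$. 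Hence $f(c)\to r^-$ as $\eta\to 0^+$, so the spherical cap in \eqref{capcomp} shrinks to $\{y\}$; quantitatively, $p_d>r-\mu$ forces $|p-y|<\sqrt{2r\mu}$, so choosing $\mu$ (and then $\eta$) small makes the cap lie in $B(y,\delta)$, disjoint from $E$. This yields $y\in B(x,R)\subset E^c$, completing the verification.

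I expect the on-sphere case to be the only real difficulty, and the key point is that the external-tangent placement $c=r+R-\eta$ works uniformly for all $r\ge R$, including the degenerate threshold $r=R$, where the internally tangent ball collapses to $B(z_0,r)$ and would fail; the cap estimate following \eqref{capcomp} is the one routine computation to carry out in full.
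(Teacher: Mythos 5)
Your proof is correct and follows essentially the same route as the paper: you cover $B(z_0,r)$ and the exterior of the closed ball by radius-$R$ balls disjoint from $\Sigma$ (using $r\ge R$ for the interior case), and for $y\in\Sigma\setminus E$ you use a radius-$R$ ball whose trace on $\Sigma$ is a small spherical cap contained in $\Sigma\setminus E$, which is exactly the paper's third family of balls. The only difference is that you carry out explicitly the cap computation that the paper merely asserts.
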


   \begin{proof}
  $\Sigma $ is a topological space with the topology induced by $  \E^d $ and  $ E  $  is closed in that topology.  Then $ \Sigma \setminus E $ is union of (d-1)-dimensional open balls in $ \Sigma $.  Let $D=(B(r))^-$, as 
  $  \E^d \setminus \Sigma = B(r) \cup D^c $, then
 $$  \E^d \setminus E = B(r) \cup D^c \cup ( \Sigma \setminus E) $$
  is union of the following open balls of radius $ R $: \par
 (i) all open  balls of radius  $ R $ contained in $B(r) $, which fill $B(r)$ since $r\geq R$;
  \par
   (ii)
 all open balls $ B $ of radius  $ R $ contained in  $  D^c   $; \par 
 (iii) all open balls $ B $ of radius  $ R $
  satisfying the 
 property: $ B \cap \Sigma $ is a (d-1)-dimensional open  ball in  $ \Sigma \setminus E $.
 
  So $ E $ is an R-body.
 \end{proof}
  
  With a similar proof, the following fact can be proved.
 
 \begin{theorem}\label{r4}
 Let $  E \subset  \E^d $ be a body, subset of a hyperplane $ \Pi. $ Then $ E $ is an R-body.
 \end{theorem}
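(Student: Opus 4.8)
The plan is to mimic the structure of the proof of Theorem \ref{r5} almost verbatim, since a hyperplane is geometrically just the limiting case of a sphere of infinite radius, and the same decomposition of the complement into open balls of radius $R$ should go through. First I would observe that $\Pi$, endowed with the subspace topology from $\E^d$, is a $(d-1)$-dimensional affine space in which $E$ is closed; hence $\Pi\setminus E$ is a relatively open subset of $\Pi$ and can be written as a union of $(d-1)$-dimensional open balls lying in $\Pi\setminus E$.

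Next I would split $\E^d$ along the hyperplane. Writing $\Pi^+$ and $\Pi^-$ for the two open half-spaces determined by $\Pi$, we have $\E^d\setminus E = \Pi^+ \cup \Pi^- \cup (\Pi\setminus E)$. I would then exhibit the complement $\E^d\setminus E$ as a union of open balls $B$ of radius $R$ of three types, in exact analogy with (i)--(iii) of Theorem \ref{r5}: first, every open ball of radius $R$ contained in the open half-space $\Pi^+$ (these fill $\Pi^+$, and likewise for $\Pi^-$); second, for each point of $\Pi\setminus E$, a ball $B(x)$ straddling $\Pi$ whose intersection with $\Pi$ is a small $(d-1)$-dimensional open ball contained in $\Pi\setminus E$. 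The key geometric point is that because $\Pi$ is flat, a ball $B(x)$ centered on $\Pi$ meets $\Pi$ in a $(d-1)$-ball of radius exactly $R$, and by shrinking or translating the center one obtains $(d-1)$-balls of any smaller radius centered anywhere in $\Pi\setminus E$; since $\Pi\setminus E$ is relatively open, each of its points lies in such a small $(d-1)$-ball that avoids $E$, and the corresponding $B(x)$ avoids $E$ entirely (it meets $\Pi$ only in that small ball and otherwise lies in the open half-spaces). This shows every point of $\E^d\setminus E$ lies in some open ball of radius $R$ disjoint from $E$, which is exactly the defining property of an $R$-body from Definition \ref{d1}.

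The main obstacle, and the only place requiring genuine care rather than transcription, is the second type of ball: I must verify that for an arbitrary point $p\in\Pi\setminus E$ there is an open ball $B(x)$ of radius $R$ with $p\in B(x)$ and $B(x)\cap E=\emptyset$. The natural choice is to center the ball on $\Pi$ at $p$ itself, giving $B(p)\cap\Pi$ equal to the $(d-1)$-ball of radius $R$ about $p$ in $\Pi$; but this large disc need not avoid $E$. The fix is to choose a small relative radius $\rho>0$ with the $(d-1)$-ball $B(p,\rho)\cap\Pi\subset\Pi\setminus E$ (possible by relative openness), then take the genuine radius-$R$ ball whose equatorial disc in $\Pi$ is exactly $B(p,\rho)\cap\Pi$ --- concretely, push the center off $\Pi$ along the normal direction by $\sqrt{R^2-\rho^2}$, so that $B(x)\cap\Pi$ is precisely that small disc and the rest of $B(x)$ lies in one open half-space, disjoint from $E\subset\Pi$. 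Since $p$ lies in this disc, $p\in B(x)\subset\E^d\setminus E$, completing the argument. Everything else is a direct copy of the preceding proof, so I would keep the writeup short and simply point to Theorem \ref{r5} for the half-space balls, concentrating the exposition on this normal-offset construction.
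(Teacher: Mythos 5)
Your proof is correct and is exactly the adaptation of the proof of Theorem \ref{r5} that the paper intends when it states the result follows ``with a similar proof'': the two open half-spaces play the role of $B(r)$ and $D^c$, and your normal-offset balls (center pushed off $\Pi$ by $\sqrt{R^2-\rho^2}$ so that $B(x)\cap\Pi$ is a small relative disc inside $\Pi\setminus E$) are the analogue of the type-(iii) balls there. (One cosmetic slip: $B(x)\setminus\Pi$ lies in the union of \emph{both} open half-spaces, not in one, but this is harmless since $E\subset\Pi$.)
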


   \begin{rem}\label{r5.1}
 In \cite{Per}, p.9,  a question of Borsuk was stated:
 'Are the $R$-bodies  locally contractible?'.
  
  The Borsuk's question has a negative answer: let $ \pi $ be an hyperplane in $E^d$. By Theorem \ref{r4} every   body, subset of $\pi $, is an $R$-body; then there exist not locally contractible bodies subsets of $\pi$. 
  
 \end{rem}

\section{Properties of  R-bodies in $\E^2$.}\label{proprbodies}
 \subsection{$R$-hulloid of three points in $\E^2$.}
  Let  $ R $ be a fixed positive real number. 
Let $T$ be a not degenerate triangle in $\E^2$, $V= \{x_1,x_2,x_3\}$ be the set of its vertices, $r(V)$ be the radius of the circle circumscribed to $T$. By Theorem \ref{r5},  if $r(V) \geq R$,  then $co_R(V)=V$.

\begin{proposition}\label{propjohnson}
Let $\{x_1,x_2,x_3\}$ be the vertices of a triangle $T$ inscribed in a circumference $C$  of radius $r$. Three  possible cases hold:
\par  
\item[i)] (\cite[pag 16]{johnson2}) if $T$ is acute-angled then the three circumferences of radius $r$, each one  through two vertices of $T$, different from $C$,  meet in the orthocenter $y$ of $T$;
\item[ii)] if $T$ is obtuse-angled in $x_3$  then the two circumferences of radius $r$ through the vertices $\{x_1, x_3\}$ and $\{x_2, x_3\}$, respectively, different from $C$, meet $C$ in $x_3$ and in a point exterior to T;
\item[iii)] if $T$ is  right-angled then   the two circumferences of radius $r$ through the vertices $\{x_1, x_3\}$ and $\{x_2, x_3\}$,  different from $C$, are tangent at $x_3$.
\end{proposition}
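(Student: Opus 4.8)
The plan is to place the circumcenter of $T$ at the origin, so that $|x_1|=|x_2|=|x_3|=r$, and to reduce everything to two elementary facts. First, for indices $i\neq j$ the point $O_{ij}:=x_i+x_j$ satisfies $|O_{ij}-x_i|=|x_j|=r=|O_{ij}-x_j|$, so the circle $C_{ij}$ of radius $r$ centered at $O_{ij}$ passes through $x_i$ and $x_j$; since $C$ is centered at the origin, $C_{ij}$ is exactly the second circle of radius $r$ through $x_i,x_j$ distinct from $C$, at least when $O_{ij}\neq 0$, that is, when $x_ix_j$ is not a diameter. Second, the orthocenter of $T$ is $y=x_1+x_2+x_3$, which I would verify in one line from $(y-x_1)\cdot(x_3-x_2)=|x_3|^2-|x_2|^2=0$ and its cyclic analogues, showing that $y$ lies on all three altitudes.

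The computational core is the single identity $|y-O_{ij}|=|x_k|=r$, where $k$ is the index complementary to $\{i,j\}$; it shows at once that the orthocenter $y$ lies on each circle $C_{ij}$. In case (i) the triangle is acute, so no side is a diameter, all three $O_{ij}$ are nonzero, and the three circles $C_{12},C_{13},C_{23}$ are genuinely distinct from $C$ and all pass through $y$. This is precisely Johnson's theorem (\cite[pag 16]{johnson2}), which the identity reproves, with the common point identified as the orthocenter. In case (ii), obtuse at $x_3$, I use only $C_{13}$ and $C_{23}$: both contain $x_3\in C$ by construction and both contain $y$ by the identity, hence they meet at $x_3$ and at the orthocenter $y$, which for a triangle obtuse at $x_3$ lies outside $T$, giving the claimed exterior second intersection. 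In case (iii), right-angled at $x_3$, the opposite side $x_1x_2$ is a diameter, hence $x_1=-x_2$ and $y=x_3$; then $O_{13}=x_1+x_3$ and $O_{23}=-x_1+x_3$ satisfy $|O_{13}-O_{23}|=2|x_1|=2r$ with equal radii $r$, so $C_{13}$ and $C_{23}$ are externally tangent, their contact point being the midpoint $(O_{13}+O_{23})/2=x_3$.

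The main obstacle is the qualitative localization of $y$ with respect to $T$: interior in the acute case, exterior in the obtuse case, and coincident with the vertex in the right-angled case. The cleanest uniform treatment uses the barycentric coordinates $(\tan\alpha_1:\tan\alpha_2:\tan\alpha_3)$ of the orthocenter, where $\alpha_i$ is the angle at $x_i$: the coordinate at an obtuse vertex is negative, so $y$ is exterior; all coordinates are positive when $T$ is acute, so $y$ is interior; and the representation degenerates onto the vertex when the corresponding angle is right. Alternatively, to stay self-contained, I would read off the position of $y$ from the signs of the inner products $x_i\cdot x_j$, which are governed by the inscribed-angle theorem relating the central angle $\angle x_iOx_j$ to the inscribed angle at the opposite vertex. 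Everything else is a direct verification, so this localization is the only step that requires a genuine geometric argument rather than algebra.
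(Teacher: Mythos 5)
Your proof is correct, and it takes a genuinely different route from the paper's, whose entire argument is to cite Johnson's circle theorem for case (i) and to assert that (ii) and (iii) ``follow by construction''. Placing the circumcenter at the origin, your single identity $y-O_{ij}=x_k$ with $y=x_1+x_2+x_3$ and $O_{ij}=x_i+x_j$ handles all three cases uniformly: it reproves Johnson's theorem in (i) while identifying the common point as the orthocenter, it exhibits the second intersection point in (ii) as $y$, and it degenerates to the tangency $y=x_3$, $|O_{13}-O_{23}|=2r$ in (iii). What your approach buys is a self-contained verification in place of an external citation; what it costs is that the one genuinely geometric step --- the location of the orthocenter relative to $T$ (interior for acute, exterior for obtuse, at the vertex for right-angled) --- is only sketched. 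That step is classical and either of your two suggested routes (signs of the barycentric coordinates $\tan\alpha_i$, or signs of the inner products $x_i\cdot x_j$ via the inscribed-angle theorem) closes it, but in a written-up version it must be carried out, since for case (ii) it is the entire content beyond the algebra. One further remark: as literally phrased, (ii) says the two circles ``meet $C$ in $x_3$ and in a point exterior to $T$'', whereas you prove that they meet \emph{each other} at $x_3$ and at $y$, and $y$ is in general not on $C$. Your reading is the one the paper actually uses later (in the proof of Theorem \ref{cor(v)inplane} the two circles are said to ``cross each other in $x_3$ and in a point exterior to $T$''), so this is an imprecision in the statement's wording rather than a gap in your argument.
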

\begin{proof} i) it is  related to the  Johnson's Theorem \cite{johnson};  ii) and iii) follows by construction.
\end{proof}

\begin{theorem}\label{cor(v)inplane}
Let  $V=\{x_1,x_2,x_3\}$ be the set of the vertices of a triangle $T$ with circumradius $r=r(T)$.   If $r(T) < R$, then
$$co_R(V)=V\cup \tilde{T},$$
where  $\tilde{T}\subset T $ is the curvilinear triangle  bordered by three arcs of circumferences of radius $R$, each one through two vertices of $T$. If $T$ is a right-angled or obtuse-angled then the vertex of the major angle of $T$ is also a vertex of $\tilde{T}$.
\end{theorem}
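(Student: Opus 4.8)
The plan is to reduce everything to a covering problem for the union of the three disks of radius $R$ centered at the vertices. By \textbf{a} of Proposition \ref{provedin8} one has $co_R(V)=(V'_R)'_R$, and since $V'_R=\{x:\dist(x,V)\ge R\}=\E^2\setminus U$ with $U:=B(x_1)\cup B(x_2)\cup B(x_3)$, rewriting the second $(\cdot)'_R$ by distances gives the clean characterization
\[
co_R(V)=\{k\in\E^2:\ B(k)\subseteq U\},\qquad U=\bigcup_{i=1}^{3}B(x_i),
\]
because $\dist(k,U^c)\ge R$ is equivalent to $B(k)\cap U^c=\emptyset$. First I would record the trivial inclusions $V\subseteq co_R(V)$ (each $B(x_i)\subseteq U$) and, by \textbf{i} of Proposition \ref{provedin8}, $co_R(V)\subseteq co(V)=T$. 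The hypothesis $r<R$ enters at this point: the maximum over $p\in T$ of $\min_i|p-x_i|$ is attained at the circumcenter (acute case) or at the midpoint of the longest side (obtuse case), and in either case it is $\le r<R$; hence $T\subseteq U$ and $U$ has no hole inside $T$.

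Next I would analyze $\pa U$, a union of arcs of the three circles $\pa B(x_i)$. The key observation is that for $k\neq x_i$ the requirement $B(k)\subseteq U$, i.e.\ $\dist(k,\pa U)\ge R$, can never be forced by a \emph{convex} outer arc of some $\pa B(x_i)$: if the foot of the perpendicular from $k$ to that circle lies on the outer arc, then $\dist(k,\pa B(x_i))=R-|k-x_i|<R$. Thus the binding part of $\pa U$ must be a \emph{concave corner}, that is, a crossing point of two of the circles that actually lies on $\pa U$. For the pair $\{x_i,x_j\}$ the relevant corner is the crossing point $c_{ij}$ on the far side of the edge $x_ix_j$ from $x_k$; it satisfies $|c_{ij}-x_i|=|c_{ij}-x_j|=R$, so it is the center of one of the two circles of radius $R$ through $x_i,x_j$. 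The condition $B(k)\subseteq U$ then reads $|k-c_{ij}|\ge R$ for each relevant pair, so that the boundary of $co_R(V)$ away from the vertices consists of arcs of the three radius-$R$ circles centered at the $c_{ij}$, each passing through two vertices. This produces exactly the curvilinear triangle $\tilde T\subseteq T$ of the statement, its three corners being the second intersection points $y_i$ of consecutive arcs.

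It then remains to decide, for each vertex, whether $x_i$ is an isolated point of $co_R(V)$ or a genuine corner of $\tilde T$. Moving from $x_i$ in an interior direction $u$, the disk $B(x_i+tu)$ stays inside $U$ for small $t>0$ precisely when $u$ lies in the polar cone of the two directions $x_i\to c_{ij}$ and $x_i\to c_{ik}$; a short computation in the right triangles $x_iM_{ij}c_{ij}$ shows that these directions make angles $-\arccos(|x_ix_j|/2R)$ and $\gamma_i+\arccos(|x_ix_k|/2R)$ with the edge $x_ix_j$, whence the cone meets the interior sector if and only if
\[
\gamma_i+\arccos\!\frac{|x_ix_j|}{2R}+\arccos\!\frac{|x_ix_k|}{2R}>\pi .
\]
Using the law of sines $|x_ix_j|=2r\sin\gamma_k$ with $r<R$, and noting that at the major vertex the other two angles $\gamma_j,\gamma_k$ are acute, the left-hand side exceeds $\gamma_i+(\tfrac{\pi}{2}-\gamma_k)+(\tfrac{\pi}{2}-\gamma_j)=2\gamma_i\ge\pi$; hence for the major angle $\gamma_i\ge\pi/2$ the inequality holds and that vertex is a corner of $\tilde T$, which is the final assertion. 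To complete the acute/right/obtuse trichotomy and to locate the corners $y_i$ I would invoke Proposition \ref{propjohnson}: as $R\downarrow r$ the centers $c_{ij}$ tend to the reflections of the circumcenter in the edges and the bounding circles tend to the three radius-$r$ circles through pairs of vertices, whose pairwise second intersection is the orthocenter — interior in the acute case (so the vertices remain isolated and the $y_i$ are interior) and exterior, respectively tangent at the vertex, in the obtuse, respectively right, case (so the major vertex merges with a corner). Assembling the central region with the three vertices yields $co_R(V)=V\cup\tilde T$.

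I expect the main obstacle to be the second step: proving rigorously that the only binding features of $\pa U$ are the three concave corners $c_{ij}$ — equivalently, that neither an outer convex arc nor an inner crossing point can make $B(k)\subseteq U$ fail for $k$ in the central region — and then the vertex case analysis, where the borderline right-angled configuration and the exact identification of the corners $y_i$ force the careful use of Proposition \ref{propjohnson}.
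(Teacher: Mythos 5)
Your framework is genuinely different from the paper's. You rewrite $co_R(V)=(V'_R)'_R$ as the covering condition $co_R(V)=\{k: B(k,R)\subseteq U\}$ with $U=\cup_i B(x_i,R)$, and then try to read off the answer from the geometry of $\pa U$; the paper instead introduces directly the three circles $\pa B(c_i,R)$ through pairs of vertices, defines $\tilde T=T\cap\bigl(\cup_j B(c_j,R)\bigr)^c$, and uses Johnson's theorem (Proposition \ref{propjohnson}) to show that $\tilde T$ is a nonempty curvilinear triangle containing the orthocenter in the acute case and having the major vertex as a corner in the right/obtuse cases. Several of your steps are sound and even cleaner than the paper's: the characterization via $U$, the inclusions $V\subseteq co_R(V)\subseteq T$, the fact $T\subseteq U$ when $r<R$ (which follows, e.g., from $\sum_i\lambda_i|p-x_i|^2=r^2-|p-O|^2$ for $p=\sum_i\lambda_i x_i$), and the identification of the far-side intersection points $c_{ij}$ with the centers $c_l$ of the paper.

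The genuine gap is in your second step, and it is exactly the substantive half of the theorem. Your radial-foot argument shows only that the corner conditions are \emph{necessary}: since each $c_{ij}\in U^c$, $B(k,R)\subseteq U$ forces $|k-c_{ij}|\ge R$ (this, with $co_R(V)\subseteq T$, already gives the easy inclusion $co_R(V)\subseteq V\cup\tilde T$). But you then assert that "$B(k)\subseteq U$ reads $|k-c_{ij}|\ge R$ for each relevant pair", i.e.\ that these conditions are also \emph{sufficient}, which is precisely the hard inclusion $\tilde T\subseteq co_R(V)$. Your own observation cuts against sufficiency: for $k\in T\setminus V$ the radial foot of $k$ on each circle $\pa B(x_i,R)$ is at distance $|R-|k-x_i||<R$ from $k$, so if that foot lies on $\pa U$ then $\dist(k,U^c)<R$ no matter how far $k$ is from the corners. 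To close the argument you must prove two additional facts, both using $r<R$: (a) for every $k\in\tilde T$ the radial foot of $k$ on $\pa B(x_i,R)$ lies in $B(x_j,R)\cup B(x_l,R)$, so that the minimum of $|k-\cdot|$ over each arc of $\pa U$ is attained at an endpoint; and (b) the only corners of $\pa U$ are the three far-side points $c_{ij}$, i.e.\ the near-side intersection point of $\pa B(x_i,R)\cap\pa B(x_j,R)$ always lies inside $B(x_l,R)$ (a short computation with the circumcenter shows this is equivalent to $\sqrt{r^2-s^2/4}<\sqrt{R^2-s^2/4}$, hence true, but it must be said). Without (a) and (b) the equality of your "central region" with $\tilde T$ is unproved; you flag this yourself as the main obstacle, but the proposal contains no argument for it, so as written it establishes only $co_R(V)\subseteq V\cup\tilde T$. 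The vertex analysis in your last step is a reasonable plan for the final sentence of the statement, but it too depends on the unproved sufficiency.
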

\begin{proof}  Let 
  $B(q_i,r )$,$B(c_i,R )$  be the open circles, not containing $x_i$,  with boundary  through the two vertices of $T$ different from $x_i, i= 1,2,3$. In the case i) of Proposition \ref{propjohnson}, the orthocenter $y$ of $T$ is in the interior of $T$ and $y \in \cap_{i=1,2,3} \pa B(q_i,r)$. As  $R>r$ : 
$ T\cap B(c_i,R) \subsetneq T\cap B(q_i,r)$, then
 $\dist (y, B(c_i,R)) > 0$, ($i= 1,2,3$). Thus
 \begin{equation}\label{Ttilde=}
 \tilde{T}:\equiv T \cap \left(\bigcup_{j =1} ^{3} B(c_j, R) \right)^C 
 \end{equation}
is a curvilinear triangle with $y\in Int( \tilde{T})$; moreover $\pa\tilde{T}$ is union of  of three arcs of the circumferences  $\pa B(c_i,R)$ $(i=1,2,3)$.

If $T$ is obtuse-angled at $x_3$, 
case ii) of Proposition \ref{propjohnson}),   the two circumferences $\pa B(c_i,R )$ containing $x_3$ and another vertex of $T$ cross each other in $x_3$ and in a point exterior to $T$.
If $T$ is right-angled at $x_3$ the two  circumferences $\pa B(q_i,r )$   meet and are tangent to each other in $x_3$, then  
again  the circumferences $\pa B(c_i,R )$  cross each other in $x_3$ and in a point exterior to $T$. In both cases $\dist (x_3, B(c_3,R)) > 0$ and $\tilde{T}$, given by \eqref{Ttilde=}, is a curvilinear triangle with a vertex at $x_3$. \end{proof}

\subsection{ Two dimensional $R$-bodies, equivalent definitions}\label{equivdefini}

\begin{definition}\label{defnotclassic}
Let $a_1, a_2$ be two  points in $\E^2$, with $0 < |a_1-a_2| < 2R$. Let $B(x_1), B(x_2) $ the two open circles  with the boundaries through $a_1,a_2$. Let us define
$$H(a_1,a_2)= B(x_1) \cup B(x_2).$$
\end{definition}

\begin{definition}
 Let $A$ be a plane body. $A$  satisfies the  property $\mathfrak{Q}_R$ if :
$$\forall a_1,a_2,a_3 \in A \mbox{\quad the $R$-hulloid of  the set  $\{a_1,a_2,a_3\}$  is a subset of } A. $$ 
\end{definition}
\begin{lemma}\label{Qrproperty}Let $A$ be a plane body. If $A$ satisfies the property  $\mathfrak{Q}_R$,  then 
\begin{equation}\label{Prproperty}
\{a_1,a_2\}\subset A, 0 < |a_1-a_2| < 2R : h(a_1,a_2)\setminus \{a_1,a_2\}\subset A^c  \Rightarrow H(a_1,a_2) \subset A^c.
\end{equation}

\end{lemma}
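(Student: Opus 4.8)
The plan is to argue by contradiction: I assume a point of $A$ lies in $H(a_1,a_2)=B(x_1)\cup B(x_2)$ and contradict the hypothesis $h(a_1,a_2)\setminus\{a_1,a_2\}\subset A^c$ by feeding a well-chosen triple into the property $\mathfrak{Q}_R$. The two circles $\pa B(x_1),\pa B(x_2)$ through $a_1,a_2$ play symmetric roles (reflection across the line $a_1a_2$), and the hypothesis is symmetric, so it suffices to show $A\cap B(x_1)=\emptyset$. Suppose instead $p\in A\cap B(x_1)$. Since $a_1,a_2\in\pa B(x_1)$ we have $p\neq a_1,a_2$; and since $h(a_1,a_2)=\overline{B(x_1)}\cap\overline{B(x_2)}$ while the hypothesis forces $A\cap h(a_1,a_2)=\{a_1,a_2\}$, the point $p$ cannot lie in $h(a_1,a_2)$. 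As $p\in\overline{B(x_1)}$, this means $p\notin\overline{B(x_2)}$, so $p\in B(x_1)\setminus\overline{B(x_2)}$.

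Next I would pin down the geometry of $\{a_1,a_2,p\}$. Put $a_1=(-d,0)$, $a_2=(d,0)$ with $d=|a_1-a_2|/2$, so the centers are $x_1=(0,h)$, $x_2=(0,-h)$ with $h=\sqrt{R^2-d^2}$. Writing $Q=|p|^2-d^2$, the conditions $p\in B(x_1)$ and $p\notin\overline{B(x_2)}$ read $Q<2hp_y$ and $Q>-2hp_y$. Together they give $p_y>0$ (in particular $\{a_1,a_2,p\}$ is not collinear), and for the circumcenter $(0,t^\ast)$, where $t^\ast=Q/(2p_y)$, they give $|t^\ast|<h$. Hence the circumradius $\sqrt{d^2+(t^\ast)^2}$ is strictly less than $\sqrt{d^2+h^2}=R$. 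I expect this inequality to be the technical heart of the argument: it is exactly what excludes the degenerate regime in which $\{a_1,a_2,p\}$ would sit on a circle of radius $\ge R$ (there $co_R$ of the triple is just the three points, by Theorem \ref{r5}, and yields nothing), and it is what makes Theorem \ref{cor(v)inplane} applicable.

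With circumradius $<R$ secured, Theorem \ref{cor(v)inplane} gives $co_R(\{a_1,a_2,p\})=\{a_1,a_2,p\}\cup\tilde T$, where among the three bounding arcs of $\tilde T$ one is an arc of the radius-$R$ circle through $a_1,a_2$ not containing $p$; as $p\in B(x_1)$, that circle is $\pa B(x_2)$, and the relevant arc is the one facing the triangle's interior, i.e.\ the sub-arc of $\pa B(x_2)$ from $a_1$ to $a_2$ on the side $p_y>0$. I would then verify this arc lies in $\overline{B(x_1)}$: the circles $\pa B(x_1),\pa B(x_2)$ meet only at $a_1,a_2$, so one of the two arcs of $\pa B(x_2)$ they cut off is interior to $\overline{B(x_1)}$, and the top point $(0,R-h)$ of the $p_y>0$ arc satisfies $|(0,R-h)-x_1|=|R-2h|\le R$, identifying it as that interior arc. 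Thus the arc is contained in $\overline{B(x_1)}\cap\overline{B(x_2)}=h(a_1,a_2)$.

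Finally I would collapse the two sides. The arc is a side of the closed set $\tilde T\subset co_R(\{a_1,a_2,p\})$, and since $a_1,a_2,p\in A$, property $\mathfrak{Q}_R$ gives $co_R(\{a_1,a_2,p\})\subset A$; hence the whole arc lies in $A$. But the arc minus its endpoints is a non-degenerate piece of $h(a_1,a_2)\setminus\{a_1,a_2\}$, which by hypothesis lies in $A^c$; having interior points, this is a contradiction. Therefore $A\cap B(x_1)=\emptyset$, and the symmetric argument gives $A\cap B(x_2)=\emptyset$, so $H(a_1,a_2)=B(x_1)\cup B(x_2)\subset A^c$. The only bookkeeping point to watch is the right/obtuse case of Theorem \ref{cor(v)inplane}, where $\tilde T$ may acquire a vertex at $a_1$ or $a_2$; but the side through the pair $\{a_1,a_2\}$ on $\pa B(x_2)$ is present in every case, so the contradiction survives.
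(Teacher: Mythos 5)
Your proof is correct and follows essentially the same route as the paper: assume a point of $A$ lies in $B(x_1)\setminus h(a_1,a_2)$, note the circumradius of the resulting triangle is less than $R$, and use Theorem \ref{cor(v)inplane} to produce an arc of $\pa B(x_2)$ inside $co_R(\{a_1,a_2,p\})\cap\big(h(a_1,a_2)\setminus\{a_1,a_2\}\big)$, contradicting the hypothesis via $\mathfrak{Q}_R$. The only difference is that you explicitly verify the two facts the paper asserts without proof (that $r(T)<R$ and that the relevant arc lies in the lens), which is a welcome addition but not a different argument.
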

\begin{proof} Let  $H(a_1,a_2)= B(x_1) \cup B(x_2)$. Let assume, by contradiction, that there exist $a_3 \in A \cap (B(x_1) \setminus h(a_1,a_2))$. Let $T= co(\{a_1,a_2,a_3\})$, then  $r(T) < R$. By Theorem \ref{cor(v)inplane} 
there exist $y_1, y_2 \in arc_{\pa B(x_2)}(a_1,a_2) $ satisfying
$$arc_{\pa B(x_2)}(y_1,y_2)\subset co_R(\{a_1,a_2,a_3\})\subset  A.$$
As 
$$arc_{\pa B(x_2)}(y_1,y_2)\subset h(a_1,a_2)\setminus \{a_1,a_2\} \subset A^c, $$
this is impossible. The proof is similar if $a_3 \in B(x_2)$.
\end{proof}

\begin{theorem}\label{proPcor} Let $A$ be a plane  body. $A$ is an $R$-body if and only if $A$ satisfies the   property $\mathfrak{Q}_R$.
\end{theorem}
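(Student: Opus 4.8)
The plan is to prove the equivalence by establishing both implications separately. The forward direction ($R$-body $\Rightarrow \mathfrak{Q}_R$) should be the easy one: if $A$ is an $R$-body and $a_1,a_2,a_3\in A$, then by Remark~\ref{r2} the set $co_R(\{a_1,a_2,a_3\})$ is the minimal $R$-body containing $\{a_1,a_2,a_3\}$, and since $A$ itself is an $R$-body containing these three points, property \textbf{c} of Proposition~\ref{provedin8} (monotonicity) gives $co_R(\{a_1,a_2,a_3\})\subset co_R(A)=A$. This is exactly $\mathfrak{Q}_R$, so this direction is almost immediate.

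The harder direction is the converse: assuming $A$ satisfies $\mathfrak{Q}_R$, I must show $A$ is an $R$-body, i.e.\ that for every $y\in A^c$ there is an open disc $B$ of radius $R$ with $y\in B\subset A^c$. The natural strategy is to argue by contradiction using Lemma~\ref{Qrproperty}, which $\mathfrak{Q}_R$ already supplies: it tells us that whenever the ``lens'' $\mathfrak{h}(a_1,a_2)\setminus\{a_1,a_2\}$ lies in $A^c$, the full set $H(a_1,a_2)=B(x_1)\cup B(x_2)$ lies in $A^c$. First I would take an arbitrary $y\in A^c$ and, using that $A$ is closed, locate a nearby configuration of boundary points of $A$; the goal is to produce two points $a_1,a_2\in A$ whose associated lens $\mathfrak{h}(a_1,a_2)$ meets $A$ only at its two endpoints, so that Lemma~\ref{Qrproperty} forces one of the two discs $B(x_1),B(x_2)$ of $H(a_1,a_2)$ to be a radius-$R$ disc inside $A^c$ covering $y$.

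The main obstacle is manufacturing such a pair $a_1,a_2$ with a clean lens from an arbitrary exterior point $y$, and handling the geometry uniformly. I expect to separate cases according to how $y$ sits relative to $A$: if there is already a disc of radius $\ge R$ through a point of $\partial A$ separating $y$ from $A$, one can shrink or reposition it; otherwise one uses the two-dimensional structure, sweeping a radius-$R$ circle and invoking Theorem~\ref{cor(v)inplane} (the explicit description of $co_R$ of three points as a curvilinear triangle) to detect where $A$ would have to contain forbidden arcs. Concretely, I would fix a candidate center $x$ with $y\in B(x)$ and $\dist(x,A)$ maximal subject to $|x-y|<R$; if $B(x)\not\subset A^c$ then $\partial B(x)$ or $B(x)$ meets $A$, and rotating/translating the disc while keeping $y$ inside should expose two contact points $a_1,a_2\in A$ bounding an empty lens, contradicting $\mathfrak{Q}_R$ via Lemma~\ref{Qrproperty}. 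The delicate points will be ensuring the lens contains no third point of $A$ in its interior (so that the hypothesis of the lemma is genuinely met) and treating the degenerate configurations where the two candidate points are antipodal or nearly so; these are precisely the places where the planar hypothesis $d=2$ and the trichotomy of Proposition~\ref{propjohnson} are indispensable.
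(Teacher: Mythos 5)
Your forward implication is exactly the paper's and is complete: minimality of the $R$-hulloid plus monotonicity gives $co_R(\{a_1,a_2,a_3\})\subset co_R(A)=A$. The converse, however, has a genuine gap precisely where all the work lies: actually producing a pair $a_1,a_2\in A$ whose lens meets $A$ only at its endpoints. The paper starts from the distance disc: with $\delta=\dist(y_0,A)<R$ one has $B(y_0,\delta)\subset A^c$ and a contact point $a_1\in A\cap\pa B(y_0,\delta)$. If a second contact point $a_2$ exists, then $\mathfrak{h}(a_1,a_2)\setminus\{a_1,a_2\}\subset B(y_0,\delta)\subset A^c$ (arcs of radius $R>\delta$ through two points of $\pa B(y_0,\delta)$ have smaller sagitta than the arcs of $\pa B(y_0,\delta)$ itself, so they stay in the closed disc and touch its boundary only at $a_1,a_2$), and Lemma~\ref{Qrproperty} gives $H(a_1,a_2)\subset A^c$ with $y_0\in H(a_1,a_2)$. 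The unavoidable remaining case is $A\cap\pa B(y_0,\delta)=\{a_1\}$, and this is what your sketch does not resolve. Your proposed device --- choose $x$ maximizing $\dist(x,A)$ subject to $|x-y|<R$, then ``rotate/translate the disc until two contact points appear'' --- does not work as stated: the constraint set is open, so a maximizer need not exist there, and even where one exists the contact set can remain a single point (take $A$ to be a single point, or $y$ near a strictly convex smooth arc of $\pa A$), so no second point is ever ``exposed'' and no lens is produced.

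The missing idea is the paper's treatment of the single-contact case: anchor a monotone one-parameter family of lenses at $a_1$, namely $t\mapsto \mathfrak{h}(a_1,a(t))$ with $a(t)$ running along the ray from $a_1$ through the antipode $a_*$ of $a_1$ on $\pa B(y_0,\delta)$, up to the parameter $t_R$ at which $|a_1-a(t_R)|=2R$. Either every such lens avoids $A$ away from $a_1$, in which case the limiting lens is a closed disc of radius $R$ whose interior contains $y_0$ and lies in $A^c$, and you are done directly; or there is a first parameter $\overline{t}$ (an infimum, attained by continuity and strict monotonicity of the family) at which the lens meets $A$, and the point $a_2$ of $\pa F(\overline{t})\cap\pa A$ nearest to $a_1$ gives a lens $\mathfrak{h}(a_1,a_2)$ whose interior avoids $A$, returning you to the two-contact case. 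Without this (or an equivalent) construction the hard implication is not proved. A minor further remark: the trichotomy of Proposition~\ref{propjohnson} is not ``indispensable'' in the main argument as you suggest; it enters only through Theorem~\ref{cor(v)inplane} inside the proof of Lemma~\ref{Qrproperty}.
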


\begin{proof}
Let  $A$ be an $R$-body then $co_R(\{a_1,a_2,a_3\}) \subset co_R(A)=A$ and $\mathfrak{Q}_R$ holds for $A$. 

On the other hand  let assume the property $\mathfrak{Q}_R$ holds for a  body $A$. Let us prove that $A$ is an $R$-body, by showing:
\begin{equation}\label{propy0defRbody}
\mbox{if \quad} y_0\in A^c  \mbox{\quad then  \quad} \exists B \ni y_0,\,  B \subset A^c.
\end{equation}

Let $y_0 \in A^c$, then there exists $\delta > 0$ such that $\dist(y_0,A) =\delta $. If $\delta \geq R$, then $B(y_0, R) \subset B(y_0,\delta) $ and \eqref{propy0defRbody}  holds. Let $\delta < R$.
By definition of $\delta$, there exists $ a_1\in A \cap \pa B(y_0, \delta)$ and $B(y_0, \delta) \subset A^c$. 
 There are two cases:
 \par  
 i) there exists a point $a_2\neq a_1$, $a_2 \in A \cap \pa B(y_0, \delta)$;
 \par
ii)  $A\cap \pa B(y_0, \delta)=\{a_1\}$. 
 
In the case  i),  $h(a_1,a_2)\setminus \{a_1,a_2\} \subset B(y_0, \delta) \subset A^c$. Let 
 $H(a_1,a_2)=B(x_1)\cup B(x_2)$; by Lemma \ref{Qrproperty} the following inclusion holds:
\begin{equation}\label{claimH}
 H(a_1,a_2) \subset A^c.
\end{equation} 
 As $y_0 \in B(x_1)$ or $y_0\in B(x_2)$ and both balls $B(x_1), i=1,2$ have empty intersection with $A$, then  $y_0$ satisfies \eqref{propy0defRbody}.

In the case ii) on $\pa B(y_0,\delta)$ let  $a_*$ be the symmetric point  of $a_1$ with respect to the center $y_0$. For $t> 2 $ let $a(t)=a_1+(t-1)(a_*-a_1)$.
Let $t_R> 2 $ such that $|a_1-a(t_R)| = 2R$.  The set function $t \to h(a_1,a(t))\setminus  \{a_1\}$,  for  $2\leq  t <t_R,  $  is  strictly increasing  with respect to the inclusion. If for all $2 \leq  t < t_R$ the set  $h(a_1,a(t)) \setminus \{a_1\}\subset A^c$ then $\lim_{t\to {t_R}-}h(a_1,a(t))$ is a closed ball $D \ni y_0$ of radius $R$,   $A^c \supset Int(D) \ni y_0$ and \eqref{propy0defRbody} holds. Otherwise, there exists $2< \tau < t_R$ satisfying  
    $h(a_1,a(\tau)) \setminus \{a_1\}\cap A \neq \emptyset $.  Let 
    $$\overline{t}= Inf \{t \in [2,t_R] : \Big( h(a_1,a(t)) \setminus \{a_1\}\Big)\cap A \neq \emptyset \} $$
    and let 
     \begin{equation}\label{familyF(t)}
   2 \leq t \leq t_R \rightarrow F(t):= \Big( h(a_1,a(t))\setminus \{a_1\}\Big)\cap (B(y_0, \delta))^c.
   \end{equation}
  
 By construction $\{F(t)\}$   is a continuous family of bodies, strictly monotone with respect to the inclusion,     with 
     $\dist (F(t), A) > 0$ for $t < \overline{t}$. Then $ F(\overline{t})\cap A\neq \emptyset$,  $Int(F(\overline{t})) \subset A^c$ and $\dist (a_1, F(\overline{t}))>0$.
      Therefore there exists 
    $a_2\in \pa  F(\overline{t})\cap \pa A $ of minimum distance from $a_1$. This implies that $arc_{\pa F(\overline{t}) }(a_1, a_2)$ has no interior points of the body $A$. 
 Then,  
$h(a_1,a_2)\setminus \{a_1,a_2 \}\subset  A^c$; by  arguing as in case i), the inclusion
 $\eqref{claimH}$ holds
      and $y_0$ satisfies \eqref{propy0defRbody}.   
 \end{proof}

\begin{theorem}\label{teoremaQrhoQR}  Let $A \subset \E^2$ be a body. If $A$ is a $\rho$-body for $\rho < R$ then $A$ is an $R$-body.
\end{theorem}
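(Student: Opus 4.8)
The plan is to reduce everything, via the planar characterization in Theorem \ref{proPcor}, to the three–point property $\mathfrak{Q}_R$, and then to pass from radius $\rho$ to radius $R$ by a monotone exhaustion of the three–point hulloid. (I read the hypothesis in the only way that makes it correct: $A$ is a $\rho$-body for \emph{every} $\rho<R$. For a single $\rho<R$ the conclusion is false: the $\rho$-hulloid of a triangle of circumradius $<\rho$ is a $\rho$-body, yet since $co_R(V)\supsetneq co_\rho(V)$ by Theorem \ref{cor(v)inplane}, it fails $\mathfrak{Q}_R$ and is not an $R$-body. Note that ``for every $\rho<R$'' is exactly the hypothesis produced by Theorem \ref{compatRn}.)

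By Theorem \ref{proPcor} it is enough to verify $\mathfrak{Q}_R$ for $A$, i.e. that $co_R(\{a_1,a_2,a_3\})\subset A$ for every $a_1,a_2,a_3\in A$. Fix such a triple, put $V=\{a_1,a_2,a_3\}$ and let $r$ be the circumradius of $co(V)$. If $r\ge R$ then $co_R(V)=V\subset A$ by Theorem \ref{r5}, and there is nothing to do; so assume $r<R$. For each $\rho\in(r,R)$ the body $A$ is by assumption a $\rho$-body, hence satisfies $\mathfrak{Q}_\rho$ (again Theorem \ref{proPcor}), so $co_\rho(V)\subset A$.

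The core step is to let $\rho\uparrow R$. By Theorem \ref{cor(v)inplane}, for every $\rho\in(r,R]$ the set $co_\rho(V)$ is the curvilinear triangle $V\cup\tilde T_\rho$ bounded by the three arcs of radius $\rho$ through the pairs of vertices; as $\rho$ grows these arcs flatten, so $\rho\mapsto co_\rho(V)$ is increasing for the inclusion. Moreover every point of $\inte \tilde T_R$ lies strictly on the inner side of all three radius-$R$ arcs, hence strictly on the inner side of the radius-$\rho$ arcs once $\rho$ is close enough to $R$, so it belongs to $co_\rho(V)$ for such $\rho$. Together with $V\subset co_\rho(V)$ this gives $V\cup\inte \tilde T_R\subset\bigcup_{\rho<R}co_\rho(V)\subset A$. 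Since $A$ is closed it contains the closure of this union, which is $V\cup\tilde T_R=co_R(V)$. Hence $\mathfrak{Q}_R$ holds and, by Theorem \ref{proPcor}, $A$ is an $R$-body.

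The routine verifications I would still carry out are the monotonicity and continuity of $\rho\mapsto co_\rho(V)$ read off from the explicit description in Theorem \ref{cor(v)inplane}, in the right– and obtuse–angled cases (where one vertex is a corner of $\tilde T_\rho$ and the other two may sit on the boundary as isolated points of $co_\rho(V)$) as well as in the degenerate collinear case (where $co_\rho(V)$ is the segment $co(V)$ for every $\rho$, so the claim is immediate). The only genuine obstacle is the identity $\overline{\bigcup_{\rho<R}co_\rho(V)}=co_R(V)$: it rests on $\tilde T_R$ being the closure of its interior and on the bounding arcs depending continuously on $\rho$, both of which are consequences of Theorem \ref{cor(v)inplane}.
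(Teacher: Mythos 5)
Your proof is correct and follows essentially the same route as the paper's: reduce to the property $\mathfrak{Q}_R$ via Theorem \ref{proPcor}, dispose of triples with circumradius $\geq R$ by Theorem \ref{r5}, and for circumradius $r<R$ obtain $co_R(\{a_1,a_2,a_3\})\subset A$ by letting $\rho\uparrow R$ in $co_\rho(\{a_1,a_2,a_3\})\subset A$ and using that $A$ is closed (the paper compresses your careful closure-of-the-union step into ``$\tilde{T_\rho}\to\tilde{T}$''). Your preliminary remark is also well taken: the hypothesis must indeed be read as ``$A$ is a $\rho$-body for \emph{every} $\rho<R$'' --- which is exactly what Corollary \ref{liminR2} supplies via Theorem \ref{compatRn} and what the paper's own limiting argument tacitly uses --- since for a single fixed $\rho<R$ the set $co_\rho(V)$ of a triangle of circumradius $<\rho$ is itself a counterexample.
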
 
\begin{proof}
 If $A$ is $\rho$-body  the property $\mathfrak{Q}_{\rho}$ holds for $\rho < R$.  Let us show that it holds for $\rho=R$. Let $ a_1,a_2,a_3 \in A $, with $r(\{a_1,a_2,a_3\})\geq R$, then 
 $co_R(\{a_1,a_2,a_3\})= \{a_1,a_2,a_3\}\subset A$. In case  $r(\{a_1,a_2,a_3\})< R$ let  $\rho > r(\{a_1,a_2,a_3\})$; by Theorem \ref{cor(v)inplane}, with $\rho$ instead of  $R$,
 $$co_\rho(\{a_1,a_2,a_3\})= \{a_1,a_2,a_3\}\bigcup \tilde{T_\rho}$$
 with $\tilde{T_\rho}$ a curvilinear triangle subset of $A$, bounded by arcs of radius $\rho$. 
 As $A$ is closed  and $\tilde{T_\rho} \to  \tilde{T}$, then $\tilde{T}\subset A$. Therefore
  $\mathfrak{Q}_{R}$ holds too and previous theorem proves that $A$ is an $R$-body. 
\end{proof}

From Theorem \ref{teoremaQrhoQR}  and Theorem \ref{compatRn} it follows 
\begin{corollary}\label{liminR2} A limit of a sequence of plane  $R$-bodies (in Hausdorff metric) is an $R$-body too.
\end{corollary}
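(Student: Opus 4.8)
The statement to prove is Corollary \ref{liminR2}: in $\E^2$, a Hausdorff limit of a sequence of compact $R$-bodies is again an $R$-body. The key observation is that the two preceding results already do almost all the work, so the proof is essentially a combination of them. First I would invoke Theorem \ref{compatRn}, which tells us that if $A^{(n)}$ is a sequence of compact $R$-bodies converging to $A$ in the Hausdorff metric, then $A$ is an $R_\epsilon$-body for every $0 < R_\epsilon < R$. This is the only place where the convergence and compactness hypotheses are genuinely used.

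Next I would apply Theorem \ref{teoremaQrhoQR}, the upgrade lemma specific to the plane. Since $A$ is an $R_\epsilon$-body for some fixed $R_\epsilon$ with $R_\epsilon < R$ (indeed for all such $R_\epsilon$), and $A \subset \E^2$, Theorem \ref{teoremaQrhoQR} immediately promotes $A$ from a $\rho$-body with $\rho < R$ to an $R$-body. Concretely, the argument is the two-line chain: $A^{(n)} \to A$ and each $A^{(n)}$ an $R$-body gives, by Theorem \ref{compatRn}, that $A$ is a $\rho$-body for $\rho = R_\epsilon < R$; then Theorem \ref{teoremaQrhoQR} with this $\rho$ yields that $A$ is an $R$-body.

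\textbf{Main obstacle.} There is no real obstacle at the level of this corollary — the entire difficulty has been absorbed into the two cited theorems, and in particular into the planar-specific step Theorem \ref{teoremaQrhoQR}. That theorem is where the dimension $d=2$ is essential: its proof goes through the property $\mathfrak{Q}_R$ and the explicit description of $co_\rho$ of three points (Theorem \ref{cor(v)inplane}), using that the curvilinear triangle $\tilde{T}_\rho$ varies continuously in $\rho$ and, by closedness of $A$, its limit $\tilde{T}$ stays inside $A$. For the corollary itself the only thing to be careful about is bookkeeping: Theorem \ref{compatRn} delivers an $R_\epsilon$-body for \emph{every} $0 < R_\epsilon < R$, so I may simply pick any one such value (say any fixed $\rho < R$) to feed into Theorem \ref{teoremaQrhoQR}. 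I would therefore write the proof as a short deductive paragraph rather than attempting any new construction, making explicit that compactness is needed so that Theorem \ref{compatRn} applies and that the planarity is needed so that Theorem \ref{teoremaQrhoQR} applies.
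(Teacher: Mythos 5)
Your proposal is correct and is exactly the paper's proof: the authors state the corollary as an immediate consequence of Theorem \ref{compatRn} combined with Theorem \ref{teoremaQrhoQR}. One small caution: the proof of Theorem \ref{teoremaQrhoQR} actually lets $\rho$ tend to $R$ (choosing $\rho > r(\{a_1,a_2,a_3\})$ for each triple), so you should keep the full conclusion of Theorem \ref{compatRn} — that $A$ is an $R_\epsilon$-body for \emph{every} $0<R_\epsilon<R$ — rather than fixing a single $\rho$ as your last paragraph suggests; since you note that the stronger statement is available, nothing is lost.
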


\begin{rem}With  arguments similar to  the proof of Theorem \ref{proPcor}, it can also  be proved that  
for a plane body $A$ the property $\mathfrak{Q}_{R}$ is equivalent to the  property \eqref{Prproperty}.
\end{rem}

\subsection{Connected and disconnected $R$-bodies in $\E^2$}\label{coneeanddisconne}

 \begin{theorem}\label{conn=2}
 	Let $ E $ be a connected body in $  \E^2 $,  contained in an open ball $B$ of radius $R$; then $ co_R(E)  $ is  connected.
 \end{theorem}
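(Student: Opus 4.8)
The plan is to combine the reach characterization of Proposition \ref{p1} with the diameter identity of Proposition \ref{provedin8}. First I would observe that, since $E$ is connected and contained in the open ball $B$ of radius $R$, Proposition \ref{propco2} guarantees that $E$ admits an $R$-hull; by Remark \ref{reach>=RimpliesRbodies} this $R$-hull coincides with $co_R(E)$, and since an $R$-hull is by definition a set of reach $\ge R$, we obtain $reach(co_R(E)) \ge R$. This is exactly the step where both hypotheses (connectedness and being enclosed in a ball of radius $R$) are genuinely used, and it is the heart of the argument.

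Next I would control the diameter. Because $E \subset B$ with $B$ an open ball of radius $R$ and $E$ is closed and bounded, $E$ is compact, so $\diam E$ is attained by a pair of points of $E$; both lie in the open ball $B$, hence are at distance strictly less than $2R$, giving $\diam E < 2R$. By part g of Proposition \ref{provedin8}, $\diam co_R(E) = \diam E < 2R$, so any two distinct points of $co_R(E)$ lie at distance in the open interval $(0,2R)$, which is precisely the range in which $\mathfrak{h}(\cdot,\cdot)$ and Proposition \ref{p1} are meaningful. I would also record that $co_R(E)$ is compact, being closed and contained in the compact set $co(E)$ by part i of Proposition \ref{provedin8}.

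Finally I would argue by contradiction. Suppose $co_R(E)$ is disconnected. Being compact, it splits as $co_R(E) = A_1 \cup A_2$ into two nonempty disjoint closed sets, each clopen in $co_R(E)$. Choose $b_1 \in A_1$ and $b_2 \in A_2$; then $0 < |b_1 - b_2| < 2R$. Since $reach(co_R(E)) \ge R$, Proposition \ref{p1} shows that $co_R(E) \cap \mathfrak{h}(b_1,b_2)$ is connected. But $\mathfrak{h}(b_1,b_2)$, being the intersection of closed balls of radius $R$ all containing $b_1$ and $b_2$, contains both points, so $co_R(E) \cap \mathfrak{h}(b_1,b_2)$ is a connected subset of $co_R(E)$ meeting both $A_1$ and $A_2$; this contradicts the fact that $A_1, A_2$ form a separation of $co_R(E)$. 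Hence $co_R(E)$ is connected.

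The main obstacle is the first step: one must legitimately pass from the definitional ($R$-hulloid) description of $co_R(E)$ to the metric estimate $reach(co_R(E)) \ge R$, which is available here only through Proposition \ref{propco2} and thus depends essentially on both hypotheses. Once reach $\ge R$ is secured, the diameter bound makes Proposition \ref{p1} directly applicable and the remaining topological separation argument is routine.
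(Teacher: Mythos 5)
Your proof is correct and follows essentially the same route as the paper: invoke Proposition \ref{propco2} to get the $R$-hull (hence reach $\ge R$), identify it with $co_R(E)$, and conclude connectedness via Proposition \ref{p1}. The only difference is that you make explicit the diameter bound $\diam co_R(E)=\diam E<2R$ and the separation argument, which the paper compresses into the single sentence ``By Proposition \ref{p1} the set $A$ is connected''; this is a welcome clarification rather than a deviation.
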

 \begin{proof} 
 As $E$ is connected, by Proposition \ref{propco2}, $E$ admits $R$-hull $A$ of $ reach \geq R$;  then,   by Remark
 \ref{R-hull=R-hulloid},
 $A =co_R(E)$. 
 By Proposition \ref{p1} the set $A$ is connected.
  \end{proof}
 In the previous theorem the assumption that $E$ is contained  in an open ball of radius $R$ is needed as the following example shows.
\begin{example}In $E^2$ let $ \Sigma_0  := \partial B(0, R_0),  $ with $$\frac{R}{\sqrt{3}}< R_o < R.$$
Let $k_i \in \Sigma_0, i=1,2,3$ be the vertices of an equilateral triangle  $T$ and let $\pa B(o_j, R) $ the circumference,   through  the two points $k_i, i\neq j$, with $k_j\not \in B(o_j,R)$. Let 
$ D:= (B(0,4R))^-$ and 
$$E := D \cap \left(B(0,R_0)  \bigcup_{j =1} ^{3} B(o_j, R) \right)^c.$$
Then $E$ is a plane  connected body  with disconnected $R$-hulloid.
\end{example}
 \begin{proof}
It is obvious that $E$ is connected since  and it is  homotopic to a ring. $E^c$ is an open set since 
$E^c$ is the union of  $D^c$ and open balls. 
As   $R_0< R$ and $\forall i\neq j, k_i \in \pa B(o_j), k _j \not \in B(o_j)$ the set $E^c$  does not contain the set of the  vertices $k_i$.  
 Let 
 $$\tilde{T}:\equiv  \left(\bigcup_{j =1} ^{3} B(o_j, R) \cup B(0,R_0)\right)\setminus 
 \left(\bigcup_{j =1} ^{3} B(o_j, R) \right).$$
 $\tilde{T}$ is a curvilinear triangle and it is a closed  connected set disjoint from $E$; moreover  any point of $\tilde{T}$ can not lie in an open circle of radius $R$ avoiding all  the vertices $k_i$ of the equilateral triangle $T$.
 Then, by Lemma \ref{kincorE}, $E \cup\tilde{T}\subset co_R(E) $; as the complementary of $E \cup\tilde{T}$ is $D^c\cup_j B(o_j,R)$, union of open balls of radius $R$, then   $E \cup \tilde{T}$ is an $R$-body, that is 
 $$co_R(E) = E \cup \tilde{T}$$
 which is a disconnected  $R$-body.
 \end{proof}
The previous example can be modified to get a simply connected set  $E_*$ such that $co_R(E_*)$ is disconnected. Let us consider $E_*=E\cap W^c$, where $W$ is a small strip from $\pa B(o_1,R)$ to $ \pa D( 4,R)$. Clearly $co_R(E_*)=co_R(E)$ is disconnected and $E_*$ is a simply connected set.

 \section{$R$-hulloid of the vertices of a simplex in $\R^d$ }\label{seccon}
 
 \begin{definition}\label{definiz5}  Let $d\geq 2$, $1 \leq n \leq d $. Let $\{v_1, \ldots, v_{n+1}\} \subset \R^d$ be a family of affinely indipendent points and let  $V=\{v_1, \ldots, v_{n+1}\}\subset \R^d$.
 An $n$-simplex is the set $T=co(V)$.

 Let $T=co(V)$;   the $(d-1)$-simplexes  $T_i=co(V\setminus \{v_i\}), (i=1,\ldots , d+1$) are called the facets of $T$. If $V$ lies  on a sphere, centered in $Lin(T)$, and its points are equidistant, then  $T$ will be called a regular simplex.
 \end{definition}

 It is well known the following fact:  let $V$ the set of the vertices of a $d$-simplex $T$ in $\E^d$. There exists a unique open ball $B(V)$ such that  all the vertices in $V$  belong to $\pa B(V)$, called the circumball to $co(V)$. Let us notice   that $D(V)=(B(V))^-$  does not coincide (in general) with the closed ball of minimum radius containing $V$, as an obtuse isosceles triangle shows.

 \begin{definition}Let $1 < n \leq d$; if $T$ is a $n$-simplex, the circumcenter $c(T)$ and the circumradius $r(T)$ are the center and the radius respectively, of the unique open ball  $ B(c(T), r(T))$, called circumball of $T$, such  that: i) $c(T)\in Lin(T)$; ii) $ \pa B(c(T), r(T)) \supset V$. 
  \end{definition}
Let us denote $r(V):\equiv r(co(V))$, $c(V):\equiv c(co(V))$; $B(V)=B(c(V),r(V))$.

 From Theorem \ref{r5} it follows that 
\begin{corollary}\label{corollario5.3}If  $r(V) \geq R$ then 
\begin{equation}\label{co_R=E}
co_R(V)=V.
\end{equation}
 \end{corollary}
\begin{definition}\label{defhull} Let $R > 0$. The $R$-hulloid of $V$ will be called  full if its   interior  is not empty.
\end{definition}
 If $d=2$, let $V$ be the set of the vertices of a triangle   with circumradius less than $R$;  by Theorem \ref{cor(v)inplane}, $co_R(V) $ is   full.

\subsection{Examples of $R$-hulloid of the vertices of a simplex in $\E^d$}
Convex sets on $\mathcal{S}^{d-1}$ have been studied in \cite{Santalo}. Here properties of regular simplexes on  $\mathcal{S}^{d-1}$ are recalled and used. If $S$ is a regular simplex, centroid and circumcenter coincide.
\begin{lemma}\label{lemmaSantalo} Let $d >1, R_0 > 0, \Sigma_0  := \partial B(0, R_0) $ in $\E^d$. Let $ W=\{k_1, \ldots , k_{d+1}\}\subset \Sigma_0   $ be the set of the vertices of a regular $d$-simplex $S$ on $ \Sigma_0$. Then 
\begin{equation}\label{scalarkikj-}
<k_i, k_j> =-R_0^2 /d, \quad i\neq j
\end{equation}
and
\begin{equation}\label{distkikj-}
	|k_i-k_j|=\sqrt{2\frac{d+1}{d}}R_0.	
		\end{equation}
	Let	$W_i=W\setminus \{k_i\}$ and  let $\Sigma_i \subset \Sigma_0$  be the $(d-1)$-dimensional sphere  through the points of $W_i$. Then 
		$ \Sigma_i$ has center $ -k_i/d$
and
\begin{equation}\label{distptoW}
\forall p\in \Sigma_0 \mbox{\, the spherical distance on $\Sigma_0$ from \, }  p \mbox{  to \,} W \mbox{\, is less or equal to\, } 
R_0\arccos 1/d.
\end{equation}
 
\end{lemma}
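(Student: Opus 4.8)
The plan is to prove the four assertions of Lemma~\ref{lemmaSantalo} in order, exploiting the symmetry of the regular simplex $S$ inscribed in $\Sigma_0=\partial B(0,R_0)$. The whole computation rests on the fact that the vertices $k_1,\dots,k_{d+1}$ are equidistant from the origin (all have norm $R_0$) and their centroid coincides with the circumcenter $0$. First I would establish the centroid identity $\sum_{i=1}^{d+1}k_i=0$: since $S$ is regular and centered in $Lin(S)=\R^d$ with circumcenter at the origin, the centroid equals the circumcenter, so the sum of the vertices vanishes. This single relation drives everything else.

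To obtain \eqref{scalarkikj-}, I would square the centroid identity: from $0=\big\langle\sum_i k_i,\sum_j k_j\big\rangle=\sum_i|k_i|^2+\sum_{i\neq j}\langle k_i,k_j\rangle$. By symmetry all the off-diagonal inner products $\langle k_i,k_j\rangle$ ($i\neq j$) are equal to a common value $s$; since $|k_i|^2=R_0^2$ there are $d+1$ diagonal terms and $(d+1)d$ off-diagonal terms, giving $(d+1)R_0^2+(d+1)d\,s=0$, hence $s=-R_0^2/d$, which is exactly \eqref{scalarkikj-}. Formula \eqref{distkikj-} then follows immediately by expanding $|k_i-k_j|^2=|k_i|^2+|k_j|^2-2\langle k_i,k_j\rangle=2R_0^2+2R_0^2/d=2R_0^2(d+1)/d$ and taking square roots.

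For the statement about $\Sigma_i$, the $(d-1)$-sphere through the points of $W_i=W\setminus\{k_i\}$, I would show its center is $c_i=-k_i/d$. It suffices to check that $c_i$ is equidistant from every $k_j$ with $j\neq i$; using $\langle c_i,k_j\rangle=-\tfrac1d\langle k_i,k_j\rangle=-\tfrac1d(-R_0^2/d)=R_0^2/d^2$ together with $|c_i|^2=R_0^2/d^2$, one computes $|k_j-c_i|^2=R_0^2+R_0^2/d^2-2R_0^2/d^2=R_0^2(1-1/d^2)$, a value independent of $j\neq i$; so all $d$ points of $W_i$ lie on a sphere centered at $-k_i/d$, and by affine independence this is the unique such $(d-1)$-sphere. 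The point $c_i$ lies in $Lin(W_i)$ because the centroid identity gives $\sum_{j\neq i}k_j=-k_i$, so $-k_i/d$ is the centroid $\frac{1}{d}\sum_{j\neq i}k_j$ of $W_i$, confirming it is the circumcenter of the facet.

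The main obstacle is the last assertion \eqref{distptoW}, the spherical covering estimate: the spherical distance from an arbitrary $p\in\Sigma_0$ to the vertex set $W$ is at most $R_0\arccos(1/d)$. Here I would argue that the closest vertex $k_i$ to $p$ realizes $\max_i\langle p,k_i\rangle$, and that the worst case (the $p$ farthest from all vertices) is attained at the point antipodal in direction to some vertex, namely $p=-R_0 k_i/|k_i|=-k_i$, or more precisely at the ``deep hole'' of the configuration. At such a point the nearest vertex makes spherical angle $\arccos(1/d)$ with $p$: indeed the Voronoi cell of each vertex on $\Sigma_0$ is bounded by the great spheres equidistant from neighboring vertices, and by symmetry the circumradius of each facet computed above, $|k_j-c_i|=R_0\sqrt{1-1/d^2}$, corresponds to the spherical angle whose cosine is $1/d$. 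I would make this rigorous by reducing to the one-dimensional optimization of $\langle p,k_i\rangle$ over the symmetric simplex and verifying that the extremal angular separation equals $\arccos(1/d)$; controlling the geometry of the spherical Voronoi decomposition, rather than the algebraic identities, is the delicate part and may require invoking the results on convex sets on $\mathcal{S}^{d-1}$ cited from \cite{Santalo}.
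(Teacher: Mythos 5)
Your treatment of \eqref{scalarkikj-}, \eqref{distkikj-} and of the center of $\Sigma_i$ is correct and is essentially the paper's own argument: everything follows from $\sum_i k_i=0$ and $|k_i|=R_0$ (the paper pairs the centroid identity with a single $k_j$ rather than squaring it, which is the same computation), and your verification that $-k_i/d$ is equidistant from the points of $W_i$ and is their centroid, hence lies in $Lin(W_i)$, is exactly what is needed.

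The gap is in \eqref{distptoW}, which is the only nontrivial assertion of the lemma. You correctly identify the candidate extremal points $-k_i$ and compute that there the nearest vertex is at spherical distance $R_0\arccos(1/d)$, but the claim that these points realize the maximum of $p\mapsto\min_i \mathrm{dist}_{\Sigma_0}(p,k_i)$ is precisely what must be proved, and your proposal only asserts it (``the worst case is attained at \dots'', ``may require invoking \dots''). Here is an elementary way to close it. The $d+1$ simplicial cones $C_i$ generated by $\{-k_j: j\neq i\}$ cover $\E^d$: writing $x=\sum_j\alpha_j(-k_j)$ and using $\sum_j(-k_j)=0$ to shift all coefficients by $-\min_j\alpha_j$, one obtains a representation with nonnegative coefficients, one of which vanishes. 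So any $p\in\Sigma_0$ lies in some $C_i$, say $p=\mu\sum_{j\ge2}\lambda_j(-k_j)$ with $\lambda_j\ge0$, $\mu>0$. Then $\langle p,k_1\rangle=\mu\,(R_0^2/d)\sum_{j\ge2}\lambda_j$ by \eqref{scalarkikj-}, while $R_0=|p|\le\mu R_0\sum_{j\ge2}\lambda_j$ gives $\mu\sum_{j\ge2}\lambda_j\ge1$; hence $\langle p,k_1\rangle\ge R_0^2/d$, i.e.\ the spherical distance from $p$ to $k_1$ is at most $R_0\arccos(1/d)$. (The paper instead covers $\Sigma_0$ by the caps centered at the antipodes $-k_j$ and bounded by $\Sigma_j$, each of spherical radius $R_0\arccos(1/d)$; one way or another, a covering argument of this kind is the actual content of \eqref{distptoW} and cannot be left as a remark about the Voronoi decomposition.)
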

 \begin{proof}
 As the centroid of $S$ is $0$, then
 $$ \sum_{i=1} ^ {d+1} k_i =0, \quad |k_i|^2= R_0^2, \quad <k_i, k_j> =R_0^2 \cos \phi \quad  (i,j =1, \dots ,d+1), i\neq j  $$
		and
		$$  0 = <k_j,  \sum_{i=1} ^ {d+1} k_i>  = (R_0)^2 + d\,( R_0)^2 \cos 	\phi \quad (j =1, \dots ,d+1).$$ 
		Therefore $ \cos \phi =- \frac{1}{d}; $ so \eqref{scalarkikj-} and \eqref{distkikj-} hold.
		
		As $S_i= co(W_i)$ is an equilateral  $(d-1)$-simplex, the centroid of $S_i$  will be $ \frac{1}{d}\sum_{j\neq i}k_j= - k_i/d$ and coincides with the center of $\Sigma_i$.
	Let $\tilde{F_j}$ the spherical $(d-1)$-dimensional ball on $\Sigma_0$ of center $-k_j$ bounded by $\Sigma_j$.	Then $\tilde{F_j}$  has spherical radius 
		$$R_0 \arccos \frac{\langle -k_i, k_j\rangle }{R_0^2} =R_0\arccos 1/d.$$ 
		As  $\cup_{j=1}^{d+1}\tilde{F_j} =\Sigma_0$ 
 the thesis follows. 
\end{proof}

\begin{theorem}\label{lemmasigmao} Let $d > 2$ and  let $S$ be the regular simplex introduced in  Lemma \ref{lemmaSantalo}; let $ R = \frac{d}{2}R_0$.  Then the set $W$ of its vertices  is not an $R$-body and $co_R(W)= W \cup \{0\}$ is not full.
\end{theorem}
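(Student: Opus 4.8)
The plan is to reduce both assertions to the single equality $co_R(W)=W\cup\{0\}$. Once this is established, $co_R(W)$ is a finite set, hence has empty interior and is not full in the sense of Definition \ref{defhull}; and since $0\in co_R(W)\setminus W$ we get $co_R(W)\neq W$, so $W$ is not an $R$-body. The whole argument is organized around the representation $co_R(W)=(W'_R)'_R$ from part \textbf{a} of Proposition \ref{provedin8}, together with $W'_R=\{x:\dist(W,x)\ge R\}=\bigcap_i B(k_i,R)^c=\R^d\setminus\bigcup_{i=1}^{d+1}B(k_i,R)$. This converts membership into the criterion
\[
x\in co_R(W)\iff \dist(x,W'_R)\ge R\iff B(x,R)\subseteq\bigcup_{i=1}^{d+1}B(k_i,R).
\]

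For the inclusion $W\cup\{0\}\subseteq co_R(W)$ the vertices are immediate, and the point $0$ is exactly where Lemma \ref{lemmaSantalo} enters. Take $z$ with $0<|z|<R$ and set $\hat z=z/|z|$. Applying \eqref{distptoW} to the point $R_0\hat z\in\Sigma_0$ produces a vertex $k_i$ with angular distance at most $\arccos(1/d)$ from $\hat z$, that is $\langle z,k_i\rangle\ge |z|R_0/d$. Hence, writing $u=|z|/R_0<d/2$,
\[
|z-k_i|^2\le |z|^2-\tfrac{2}{d}|z|R_0+R_0^2=R_0^2\Big(u^2-\tfrac{2u}{d}+1\Big),
\]
and the quadratic $g(u)=u^2-\frac{2u}{d}+1-\frac{d^2}{4}$ factors as $g(u)=\big(u-\frac d2\big)\big(u-(\frac2d-\frac d2)\big)$, with $\frac2d-\frac d2<0$ for $d>2$; thus $g(u)<0$ on $[0,\frac d2)$. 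Therefore $|z-k_i|^2<\frac{d^2}{4}R_0^2=R^2$, i.e. $z\in B(k_i,R)$. Since $0$ lies in every $B(k_i,R)$, this gives $B(0,R)\subseteq\bigcup_i B(k_i,R)$, hence $0\in co_R(W)$ by the criterion above (equivalently, by Lemma \ref{kincorE}, no ball of radius $\ge R$ missing $W$ can contain $0$).

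For the reverse inclusion $co_R(W)\subseteq W\cup\{0\}$ I would show that every $x\notin W\cup\{0\}$ satisfies $\dist(x,W'_R)<R$, by producing a unit vector $u$ with $x+Ru$ strictly outside every closed ball $\overline{B(k_i,R)}$; shrinking the radius slightly then yields a point of $B(x,R)\cap W'_R$, so $x\notin co_R(W)$. Writing $a_i=k_i-x$, the requirement $|x+Ru-k_i|>R$ for all $i$ reads $\langle u,a_i\rangle<|a_i|^2/(2R)$ for all $i$; equivalently, the spherical caps $C_i=\{u\in\mathcal S^{d-1}:\langle u,\widehat{a_i}\rangle\ge |a_i|/(2R)\}$ must fail to cover $\mathcal S^{d-1}$. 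At $x=0$ these caps have common angular radius $\arccos(1/d)$ and cover $\mathcal S^{d-1}$ tightly, the worst directions being $-\widehat{k_j}$ lying on the cap boundaries; this is precisely \eqref{distptoW}. The plan is to exploit this tightness through the convex function $F(u)=\max_i\big(\langle u,a_i\rangle-|a_i|^2/(2R)\big)$, for which $F(0)=-\min_i|a_i|^2/(2R)<0$ because $x\notin W$. The caps cover iff $\min_{|u|=1}F\ge0$, and if the unconstrained minimizer $u^\ast$ of $F$ satisfies $|u^\ast|\ge1$, then convexity along the segment $[0,u^\ast]$ forces $F(u^\ast/|u^\ast|)\le F(0)<0$, giving the escape direction.

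The main obstacle is exactly to rule out the remaining case $|u^\ast|<1$ for every $x\neq0$, $x\notin W$: one must prove that displacing the center from the circumcenter always opens a gap in the cap covering, whereas at $x=0$ the minimizer sits at the origin and the covering is only borderline. I expect this to follow from a variational/perturbation analysis anchored at the equality case \eqref{distptoW}, using the transitive symmetry of the regular simplex to normalize the direction of $x$ together with the explicit relations \eqref{scalarkikj-}; turning this borderline-covering phenomenon into a clean global statement valid for all off-center $x$ is the crux. Combining the two inclusions gives $co_R(W)=W\cup\{0\}$, which is finite, hence not full; and since $0\notin W$, the set $W$ is strictly contained in its $R$-hulloid and is therefore not an $R$-body.
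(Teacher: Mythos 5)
Your first half is sound: the inclusion $W\cup\{0\}\subseteq co_R(W)$ is correctly proved, and in fact your direct computation (applying \eqref{distptoW} to $R_0 z/|z|$ and checking the sign of the quadratic $u^2-\tfrac{2u}{d}+1-\tfrac{d^2}{4}$ on $[0,\tfrac d2)$) is a clean quantitative version of what the paper does with its ``Claim $\mathcal{Q}$''. This already yields that $W$ is not an $R$-body. But the theorem also asserts the reverse inclusion $co_R(W)\subseteq W\cup\{0\}$ (needed for ``not full''), and there your argument stops exactly at the decisive step: you reduce it to showing that for every $x\notin W\cup\{0\}$ the spherical caps $C_i$ fail to cover $\mathcal S^{d-1}$, and you explicitly leave open the case $|u^\ast|<1$ of your variational scheme, calling it ``the crux''. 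As written, this is a genuine gap, not a routine verification: nothing in your setup rules out that the borderline covering at $x=0$ persists on some larger set of centers, and the symmetry/perturbation analysis you invoke is not carried out.

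The idea you are missing is that the paper proves the reverse inclusion constructively rather than variationally. Using \eqref{scalarkikj-}, one checks that the ball $B(o_j,R)$ with $o_j=-\tfrac d2\,k_j$ has boundary passing through $0$ and through every vertex $k_i$ with $i\neq j$, while $|o_j-k_j|=R_0+R>R$, so each $B(o_j,R)$ is an open ball of radius exactly $R$ disjoint from $W\cup\{0\}$. The paper then observes the covering identity
\begin{equation*}
(W\cup\{0\})^c=\bigcup_{j=1}^{d+1}B(o_j,R)\;\cup\;(co(W))^c,
\end{equation*}
so every point of $(W\cup\{0\})^c$ lies in an open ball of radius $R$ missing $W$ (points outside $co(W)$ are handled by balls in a supporting half-space), whence $W\cup\{0\}$ is itself an $R$-body and, being the minimal one containing $W$ by your first half, equals $co_R(W)$; by Lemma \ref{kincorE} this is exactly the statement $x\notin co_R(W)$ for all $x\notin W\cup\{0\}$ that you were trying to reach. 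To complete your proof you would either have to carry out the cap-covering perturbation argument in full, or import this explicit family of balls; without one of these the second assertion of the theorem is unproved.
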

\begin{proof}
			Let $ B(o_j, \rho_j) $ with the property that  
$$ \partial B(o_j, \rho_j) \supset 
	 \{ 0,  k_1, \ldots , k_{j-1},  k_{j+1}, \ldots , k_{d+1}  \}  .$$   Clearly
	  $o_j= - \lambda  k_j, (\lambda >0).$	
		As $ |o_j -0|^2 = | o_j- k_i|^2, i\neq j  $ then
$$ (\lambda R_0)^2 = (\lambda R_0)^2 + ( R_0)^2 + 2 \lambda ( R_0)^2 \cos \phi, $$ therefore $ \lambda = \frac{ d}{2},	
	 \; o_j =-  \frac{ d}{2} k_j $ and  $  \rho_j  =|o_j -0| = \frac{d R_0}{2} = R$. 
	 
	  From \eqref{distkikj-} it follows
\begin{equation}\label{distcicj}
	|o_i-o_j|= 2R\sqrt{\frac12+\frac{1}{2d}}, \quad j\neq i.	
		\end{equation}

	Claim $\mathcal{Q}$: {\em Let  $R-R_0< |z|\leq  R$, ${Q}_z:\equiv B(0,R_0)\cap B(z,R)$. Then $\pa {Q}_z \cap \Sigma_0$ is a spherical $(d-1)$ dimensional ball on $\Sigma_0$ of radius $r$.  If $|z| < R$ then }  
		$$r > R_0\arccos1/d.$$ 
		
	Proof:  let $v=z/|z|$, the family of  ${Q}_{\la v}$ is  ordered by inclusion 	  for $R-R_0 < \lambda  \leq R$, with minimum set for  $\lambda=R$; for $\lambda=R $ the spherical $(d-1)$ dimensional ball $\pa {Q}_{R_0z/|z|}$ has radius $R_0\arccos1/d$.
	  
If  $R-R_0 < |z| < R$, then from   Claim $\mathcal{Q}$ and \eqref{distptoW},
 any open ball	$B(z,R)$, which contains  the point  $0$ contains at least one of the vertex $k_i$,  $i=1, \ldots, d+1$.	  
	 As $ 0\not \in W$ the set $W$ is not an $R$-body. Moreover since
$$(W\cup \{0\})^c= 	\bigcup_{j =1}^{d+1} B(o_j, R)\bigcup (co(W))^c,$$
 then $W\cup \{0\}$ is an $R$-body   containing $W$; then $W\cup \{0\}$  is the $R$-hulloid of $W$ and it has empty interior.		  
 \end{proof}

   \begin{theorem}\label{nocompattezzained}In $E^3$ there exist sequences of $R$-bodies with limit, in 
   the Hausdorff metric, a  body that is not an $R$-body.
   \end{theorem}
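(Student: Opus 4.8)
The plan is to build the sequence explicitly by exploiting the degenerate behavior discovered in Theorem \ref{lemmasigmao}. In that theorem, for a regular $d$-simplex with circumradius $R_0$ and $R=\tfrac{d}{2}R_0$, the $R$-hulloid of the vertex set $W$ is the non-full set $W\cup\{0\}$, where the single interior point $\{0\}$ is forced in by the fact that any ball $B(z,R)$ through the center must swallow a vertex. The key observation is that this phenomenon is \emph{unstable}: a slight enlargement of the radius (or equivalently a slight shrink of the simplex) destroys the tangency and the point $0$ is no longer captured. I would therefore work in $\E^3$ (so $d=3$, $R=\tfrac32 R_0$) and perturb the construction so that for each $n$ the set is a genuine $R$-body, while the limit set fails to be one precisely because the limiting configuration realizes the critical tangency.

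First I would fix the regular tetrahedron $S$ on $\Sigma_0=\pa B(0,R_0)\subset\E^3$ with vertices $W=\{k_1,k_2,k_3,k_4\}$ and $R=\tfrac32 R_0$. By Theorem \ref{lemmasigmao}, $co_R(W)=W\cup\{0\}$. The idea is to approximate $W$ from the \emph{outside} in a controlled way: for each $n$ choose a scaling factor $t_n>1$, $t_n\to 1^+$, and set $W^{(n)}=t_n W$, the vertices of a slightly larger regular tetrahedron with circumradius $t_nR_0>R_0$. For these enlarged vertices the critical radius relation is broken: the balls of radius $R$ through the center of the configuration now fit strictly inside without being forced to contain a vertex, so by the strict inequality in Claim $\mathcal{Q}$ (applied with $t_nR_0$ in place of $R_0$, giving $R-t_nR_0<|z|$ and the spherical radius strictly exceeding the critical value only in the degenerate case) the center is \emph{not} in the $R$-hulloid. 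I would need to verify that $A^{(n)}:=co_R(W^{(n)})$ is then a compact $R$-body that does \emph{not} contain its center, and that $A^{(n)}\to W$ in the Hausdorff metric as $t_n\to1$. Each $A^{(n)}$ is an $R$-body by Remark \ref{R-hull=R-hulloid} (it is a hulloid by definition), and Hausdorff convergence $A^{(n)}\to W$ follows from $\diam$-control via \textbf{g} of Proposition \ref{provedin8} together with the fact that the ``extra'' captured points shrink to nothing as the tangency is approached.

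The main obstacle, and the step requiring genuine care, is making the convergence statement land on the \emph{right} limit. One must arrange that $A^{(n)}\to W$ (the bare vertex set) while $co_R(W)=W\cup\{0\}\supsetneq W$; the point is that the $R$-hulloid operation is \emph{not} continuous under Hausdorff limits, which is exactly the phenomenon the theorem asserts. So the cleaner route may be to run the perturbation in the opposite direction: choose $W^{(n)}$ approaching $W$ from configurations whose hulloid is merely $W^{(n)}$ itself (forced by circumradius $\geq R$, via Corollary \ref{corollario5.3}), take $A^{(n)}=W^{(n)}=co_R(W^{(n)})$, so each $A^{(n)}$ is trivially a compact $R$-body, and verify $A^{(n)}\to W$. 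Then the limit body is $W$, which is \emph{not} an $R$-body because, as Theorem \ref{lemmasigmao} shows, $co_R(W)=W\cup\{0\}\neq W$. Here the delicate computation is to exhibit finite vertex configurations $W^{(n)}$ with circumradius $\geq R$ converging to the critical $W$ (circumradius $R_0=\tfrac{2}{3}R$); since $W$ itself has circumradius strictly less than $R$, one cannot simply use $W^{(n)}=W$, and I would instead take regular tetrahedra scaled up by $t_n=R/R_0=\tfrac32>1$ down toward $t=1$, stopping the convergence at the critical circumradius. I expect the decisive quantitative input to be the strict inequality $r>R_0\arccos(1/d)$ of Claim $\mathcal{Q}$, which guarantees that the instability is one-sided and hence that the limit genuinely loses the interior point while every approximant retains $R$-body status.
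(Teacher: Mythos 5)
Your choice of perturbation is the right one and coincides with the paper's: push the vertices of the critical regular tetrahedron radially outward (your $W^{(n)}=t_nW$ with $t_n\to 1^+$ is the paper's $x_i^{(n)}=k_i+\epsilon_n k_i/|k_i|$), and conclude that the limit $W$ is not an $R$-body by Theorem \ref{lemmasigmao}. The gap is in the middle: you never establish that the approximants are $R$-bodies \emph{converging to $W$}. Taking $A^{(n)}=co_R(W^{(n)})$ makes each term an $R$-body for free, but then everything hinges on $co_R(t_nW)=t_nW$ (or at least $co_R(t_nW)\to W$), and your two supporting arguments do not deliver this. Excluding the center $0$ via Claim $\mathcal{Q}$ only rules out one point of the hulloid; and the diameter identity \textbf{g} of Proposition \ref{provedin8} is compatible with the hulloid being a large set (in $d=2$ the hulloid of three nearby points is a full curvilinear triangle with the same diameter as the vertex set), so it cannot show that the ``extra captured points shrink to nothing.'' Your proposed ``cleaner route'' is internally inconsistent: a configuration with circumradius $\geq R$ cannot converge in Hausdorff metric to $W$, whose circumradius is $R_0=\tfrac{2}{3}R$, and ``stopping the convergence at the critical circumradius'' changes the limit to $\tfrac{3}{2}W$, for which Theorem \ref{lemmasigmao} says nothing.

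The missing idea, which is the actual content of the paper's proof, is an explicit covering of the complement of the perturbed vertex set. Keeping the ball centers $o_i=-\tfrac{3}{2}k_i$ \emph{fixed} while the vertices move outward, one computes $R_n=|o_i-x_j^{(n)}|$ and finds $R_n^2=R^2+\epsilon_n^2+\tfrac{2}{3}R\epsilon_n>R^2$; hence
$W^{(n)}=T^{(n)}\cap\bigl(\cup_{i=1}^{4}B(o_i,R_n)\bigr)^c$
exhibits $W^{(n)}$ as the intersection of the convex body $T^{(n)}$ with the complement of a union of open balls of radius strictly larger than $R$, both of which are $R$-bodies, so $W^{(n)}$ is an $R$-body by \textbf{f} of Proposition \ref{provedin8}. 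Note that fixing the centers is what makes the enlarged balls swallow the center $0$ (since $|0-o_i|=R<R_n$) and cover everything in $T^{(n)}$ except the four vertices; a pure dilation of the whole configuration would leave $0$ on the boundary of the scaled balls and require a separate argument. Without some version of this decomposition your proof does not close.
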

\begin{proof} Let us use the notations of Lemma \ref{lemmaSantalo} in the special case $d=3$.

 Let $k_i, i=1,\ldots, 4$ the vertices of a regular simplex in $\E^3£$ on  the sphere $ \Sigma_0  := \partial B(0, R_0)$, $R_0=
\frac{2 R}{3}$.

For any fixed  $ i=1, \ldots 4$
the vertices $k_j, j\neq i$ belong to the boundary of 
the ball $B(o_i, R)$, with  
$o_i=-\frac32 k_i$.

From \eqref{scalarkikj-}   it follows that
$$< o_j, k_i> = \frac29R^2,  \quad   i\neq j,   \quad i,j =1, \dots ,4.$$

Let $\epsilon\to 0^+$ and let $x_i^{(n)}= k_i+\epsilon_n\frac{k_i}{|k_i|}, i=1, \ldots,4$. The points $x_i^{(n)}$ are  the vertices of a regular simplex  $T^{(n)}$ in $\E^3$.
For $i\neq j$ let $R_n= |o_i-x_j^{(n)}|$,
then
$$ R_n^2= R^2+\epsilon_n^2+2<k_i-o_j, k_i/|k_i| > \epsilon_n= R^2+\epsilon_n^2+\frac23R\epsilon_n> R^2.$$ For all $n \in \N$ let 
$$W^{(n)}:=\{x_1^{(n)}, \ldots, x_4^{(n)} \}= T^{(n)}\cap \large( \cup_{i=1}^4 B(o_i, R_n)\large)^c.$$
As  the complementary of the union of open balls of radius greater than $R$ is an $R$-body and  $T^{(n)}$ is convex then   $V^{(n)}$ is an  $R$-body too. The limit of  $W^{(n)}$ is  $W=\{x_1, \ldots , x_4\}$ which is not an $R$-body as proved in Theorem  \ref{lemmasigmao}. \end{proof}

  \begin{theorem}\label{rhulloidedisconesso}
  	Let $ d\ge3;  $ in  $ \E^d  $ there exist connected bodies in a ball of radius  $\sqrt{2}R$ with disconnected R-hulloid. 
\end{theorem}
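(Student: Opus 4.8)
The plan is to adapt, to dimension $d\ge 3$, the construction behind Theorem \ref{lemmasigmao} (used also in Theorem \ref{nocompattezzained}), replacing the finite vertex set $W$ by a connected \emph{spherical shell with holes} that still forces the centre $0$ into the $R$-hulloid as an \emph{isolated} point. I keep the notation of Lemma \ref{lemmaSantalo} and Theorem \ref{lemmasigmao}: set $R_0=\tfrac{2R}{d}$, let $W=\{k_1,\dots,k_{d+1}\}$ be the vertices of a regular $d$-simplex on $\Sigma_0=\pa B(0,R_0)$, and let $o_j=-\tfrac d2 k_j$, so that $|o_j|=R$, the ball $B(o_j,R)$ passes through $\{0\}\cup(W\setminus\{k_j\})$, and by Theorem \ref{lemmasigmao} one has $co_R(W)=W\cup\{0\}$ together with $co(W)\setminus(W\cup\{0\})\subset\bigcup_j B(o_j,R)$. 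I would then propose the connected body
\[
E:=\{x\in\E^d: R_0\le |x|\le \sqrt2\,R\}\setminus\bigcup_{j=1}^{d+1}B(o_j,R),
\]
the closed shell between $\Sigma_0$ and the sphere of radius $\sqrt2\,R$ from which the $d+1$ balls $B(o_j,R)$ have been removed. By construction $E$ is a nonempty closed set contained in $\overline{B(0,\sqrt2\,R)}$, and $W\subset E$ since each $k_j$ lies on $\Sigma_0$ and on the boundaries, never in the interiors, of the balls $B(o_i,R)$.

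The first point to verify is that $0$ is an isolated point of $co_R(E)$. Since $W\subset E$, monotonicity (part \textbf{c} of Proposition \ref{provedin8}) gives $\{0\}\subset co_R(W)\subset co_R(E)$, while $0\notin E$ because $|0|<R_0$. For the isolation, note that $0$ is the centroid of the simplex, hence $0\in\inte(co(W))$, so a small punctured ball $B(0,\eps)\setminus\{0\}$ lies in $co(W)\setminus\{0\}\subset\bigcup_j B(o_j,R)$; each such point sits in some $B(o_j,R)$, an open ball of radius $R$ contained in $E^c$, and is therefore excluded from $co_R(E)$ by Lemma \ref{kincorE}. Thus $co_R(E)\cap B(0,\eps)=\{0\}$, so $\{0\}$ is a connected component of $co_R(E)$ disjoint from $E$.

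The heart of the argument is the connectedness of $E$, and this is exactly where $d\ge 3$ and the radius $\sqrt2\,R$ enter. Writing a point as $\rho u$ with $|u|=1$, a short computation using $|o_j|=R$ gives $\rho u\in B(o_j,R)$ iff $\langle u,k_j\rangle<-\rho/d$; hence the set of directions admissible at radius $\rho$, cut out by $\min_j\langle u,k_j\rangle\ge-\rho/d$, \emph{grows} with $\rho$. Consequently every point of $E$ can be pushed radially outward inside $E$, so $E$ retracts onto
\[
E\cap\{|x|=\sqrt2\,R\}=\sqrt2\,R\cdot U,\qquad U=\Big\{u\in\mathcal S^{d-1}:\ \langle u,k_j\rangle\ge-\tfrac{\sqrt2}{2}R_0\ \ (j=1,\dots,d+1)\Big\}.
\]
The choice $\sqrt2\,R$ makes $U$ the complement in $\mathcal S^{d-1}$ of the $d+1$ spherical caps of angular radius $45^\circ$ centred at the antipodes $-k_j$; since by \eqref{scalarkikj-} the $-k_j$ are mutually at angular distance $\arccos(-1/d)>90^\circ$, these caps are pairwise disjoint, and for $d\ge 3$ the complement of finitely many disjoint caps in $\mathcal S^{d-1}$ is connected. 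Hence $E$ is connected, while $co_R(E)\supset E\cup\{0\}$ is disconnected, which is the assertion. (For $d=2$ the set $U$ would be a circle minus arcs and could be disconnected, in accordance with Theorem \ref{conn=2}.)

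The main obstacle I foresee is the connectivity bookkeeping of the third paragraph: making rigorous that the radial map $x\mapsto \sqrt2\,R\,x/|x|$ is a well-defined continuous retraction of $E$ onto $\sqrt2\,R\cdot U$ (which rests on the monotonicity in $\rho$ of the admissible-direction sets and on $0\notin E$), and that $\mathcal S^{d-1}$ minus $d+1$ pairwise disjoint caps is connected when $d\ge 3$. The remaining checks — that $W\subset E$, that $0\in\inte(co(W))$, and that the punctured neighbourhood of $0$ is swallowed by the removed balls — are immediate from Lemma \ref{lemmaSantalo} and Theorem \ref{lemmasigmao}, so no new geometric input beyond those results is needed.
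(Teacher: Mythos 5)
Your construction is essentially the paper's own: the same regular simplex on $\Sigma_0$, the same balls $B(o_j,R)$ with $o_j=-\tfrac d2 k_j$, the same outer radius $\sqrt{2}R$, and the same radial push-outward argument for connectedness of $E$. The only (harmless) variants are that you take the closed shell $R_0\le |x|\le \sqrt{2}R$ minus the balls instead of the full ball minus the balls and minus $\{0\}$, and that you conclude by showing $0$ is an isolated point of $co_R(E)$ (via Lemma \ref{kincorE} and $co(W)\setminus(W\cup\{0\})\subset\bigcup_j B(o_j,R)$) rather than computing $co_R(E)=E\cup\{0\}$ exactly; both routes are valid.
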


\begin{proof}  Let us consider the regular simplex $S$  in $\E^d$, defined in  Theorem \ref{lemmasigmao}, with vertices 
	on $ \Sigma_0  := \partial B(0, R_0) $,  $ R_0 := \frac{2 R}{d}   $.

	  The  $(d-2)$ spherical  surface $L_{i,j}:\equiv \pa B(o_i,R)\cap \pa B(o_j,R)$, $i\neq j$, has  center at $\frac{o_i+o_j}{2}$ and contains  $0$. Then,   by \eqref{distcicj}, $L_{i,j}$ has radius 
$$|(o_i+o_j)/2|=	\sqrt{R^2-R^2(\frac{1}{2}+\frac{1}{2d})}=R\sqrt{\frac12-\frac{1}{2d}}.$$
  Then,  
the   maximum distance  of $L_{i,j}$ from $0$ is  
$$2 R\sqrt{\frac12-\frac{1}{2d}}< \sqrt{2}R.$$
	Let  $ D:= (B(0,\sqrt{2}R))^-$  and let 
	\begin{equation}\label{E=example1}
	E := D \cap \left( \bigcup_{j =1} ^{d+1} B(o_j, R) \cup \{0\} \right)^c.
	\end{equation}
	\indent
	Claim 1: {\em $E$ is connected.}\par
	
	First let us consider the  $(d-1)$ spherical balls $U_i=B(o_i)\cap \pa B(0,\sqrt{2}R)$ centered at $c_i= \sqrt{2}o_i$. As $0\in \pa B(o_i,R)$, then by elementary geometric arguments, the spherical radius of $U_i$ is $\frac{\pi}{4}\sqrt{2}R$. By 
\eqref{distcicj},  the spherical  distance between $o_i$ and $o_j$ on $\pa B(0,R)$
is  
$$2R\arcsin \sqrt{\frac12+\frac{1}{2d}} > \frac{\pi}{2}R.$$
Then, the spherical distance between $c_i$ and $c_j$ is greater than $\frac{\pi}{2}\sqrt{2}R$. Since the $(d-1)$ spherical balls $U_i$ have radius $\frac{\pi}{4}\sqrt{2}R$, they   are disjoints and 
$$\mathcal{E}=\pa B(0,\sqrt{2}R) \setminus \cup_{i=1}^{d+1} S_i$$   is a connected subset of $\pa E$.
Let us consider now $x\in E$, then $x \not \in B(o_i,R)$; since $0\in \pa B(o_i,R)$, then $\la x \not \in B(o_i)$ for $\la \geq 1$. Therefore the segment connecting $x$ to $ \sqrt{2}\frac{x}{|x|}R\in  \mathcal{E} $ is a subset of  $E$.  Claim 1 follows.

	Claim 2: {\em $E^c$   is an open set.}
	
	 As $E^c=  D^c \cup \left( \bigcup_{j =1}^{d+1} B(o_j, R) \cup \{0\} \right)$, 	
	it is enough to show that $\{0\}\subset Int (E^c)$. This follows from the fact that $\{0\}$ is in the interior of the simplex $S$, and $Int (S)\subset E^c$.
	
	 Claim 3: {\em The set  of the vertices of $S$ is contained in $E$ }.
	 
	 	For each $i$ the vertex $k_i\in \pa B(o_j,R), j\neq i$ and $k_i \not \in B(o_i, R)^-$.

      $E$  is a closed set from Claim 2;   from Claim 3  and \eqref{E=example1}   it follows that $E$ is not an $R$-body, since     any open ball of radius $R$, containing  $0\in  E^c$, 
  cannot be contained  in $E^c$. 
  
     Claim 4: {\em The point $0 $ has a positive distance from $E$.}
     
     Let us consider for $i=1, \ldots, d+1$ the simplexes $S_i=co(\{0,k_1,\ldots, k_{i-1}, k_{i+1}, k_{d+1}\})$. Then $S=\cup_i S_i$. Let $0< \epsilon< \dist(0,S_i)$, where $S_i$ are the facets of $S$; as $B(0, \epsilon)\subset \cup_i B(0,\epsilon)\cap S_i$, then 
     $$\dist( 0, E) \geq \epsilon.$$

  Let us consider now the body 
  $E \cup \{0\}$.
  Since 
  $$ (E \cup \{0\})^c= D^c \cup \left( \bigcup_{j =1}^{d+1} B(c_j, R)\right), $$
 then $E\cup \{0\}$  is by definition an $R$-body and is the minimal $R$-body containing $E$.
 Then    
     $ co_R(E) = E \cup \{0\}   $
     which  is a not connected set, since is the union of two closed disjoint sets.
 \end{proof}

 \section{R-bodies and  other classes of bodies}
In Remark \ref{reach>=RimpliesRbodies}  it is noticed that the class of $R$-bodies contains the class of  bodies which have reach greater or equal than $R$.

The following class has been introduced   introduced in \cite{Gol}: the class $\mathcal{K}_2^{1/R}$  of bodies  $A$ satisfying the following property:
 \begin{equation}\label{defK21/R}
 \forall x\in A^c \mbox{\, there exists a closed ball \, } D(R)\ni x: D(R)\cap int(A)=\emptyset.
 \end{equation}

 \begin{theorem}\label{inclusionbodies}
The following strong inclusion holds:
\begin{equation}\label{stronginclusion}
 R\mbox{-bodies} \subsetneq \mathcal{K}_2^{1/R}.
\end{equation}
Moreover let   $ A \in \mathcal{K}_2^{1/R}$ and $A=(int(A))^-$, then: \par
i) if $d=2$, then $A$ is an $R$-body; 
\par 
ii) if $d>2 $ , then $A$ can be  not an $R$-body. 
 \end{theorem}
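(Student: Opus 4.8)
The plan is to prove the three assertions of Theorem~\ref{inclusionbodies} separately, starting with the inclusion \eqref{stronginclusion}, which has two parts: containment and strictness. For the containment $R\text{-bodies}\subset\mathcal{K}_2^{1/R}$, let $A$ be an $R$-body and $x\in A^c$. By Definition~\ref{d1} there exists an open ball $B$ of radius $R$ with $x\in B\subset A^c$; taking $D(R):=B^-$ gives a closed ball through $x$, and since $B\subset A^c$ we have $B\cap A=\emptyset$, so certainly $D(R)\cap\inte(A)\subset B^-\cap A=\emptyset$ after a short argument separating the boundary (here I would note that $B$ is open and disjoint from $A$, so $B^-$ can only meet $A$ in $\pa A$, hence meets $\inte(A)$ not at all). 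Thus \eqref{defK21/R} holds and $A\in\mathcal{K}_2^{1/R}$. For strictness, the cleanest witness is exactly the set $A=E\cup\{0\}$ constructed in Theorem~\ref{lemmasigmao} (or its analogue $W$): I would exhibit a body which lies in $\mathcal{K}_2^{1/R}$ but is not an $R$-body because it has empty interior or an isolated point, so that the closed-ball condition \eqref{defK21/R} — which only forbids meeting the \emph{interior} — is vacuously satisfied, whereas the open-ball condition of Definition~\ref{d1} fails.

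For part~(i), assume $d=2$, $A\in\mathcal{K}_2^{1/R}$, and $A=(\inte A)^-$. The plan is to verify the property $\mathfrak{Q}_R$ and invoke Theorem~\ref{proPcor}. Equivalently, I would show directly that $A$ is an $R$-body by taking an arbitrary $y\in A^c$ and producing an open ball of radius $R$ through $y$ inside $A^c$. The hypothesis gives a closed ball $D(R)\ni y$ with $D(R)\cap\inte(A)=\emptyset$; the extra assumption $A=(\inte A)^-$ is what upgrades this to the open-ball condition, because then $A$ has no lower-dimensional ``spikes'' and $D(R)\cap A\subset\pa A=\pa(\inte A)$ forces $\inte(D(R))=B\subset A^c$ after ruling out boundary contact by a small perturbation of the center. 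The role of $d=2$ is that in the plane the local structure of $\pa A$ near a boundary point touched by $D(R)$ is controlled well enough (via Theorem~\ref{proPcor} and the curvilinear-triangle description of Theorem~\ref{cor(v)inplane}) to slide the ball off any residual contact; this is where the planar equivalence between $\mathfrak{Q}_R$ and being an $R$-body does the work.

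For part~(ii), I would again turn to the regular-simplex examples of Section~\ref{seccon}. The natural candidate is a full-dimensional thickening of the configuration in Theorem~\ref{lemmasigmao}: take a body $A$ with nonempty interior satisfying $A=(\inte A)^-$ whose complement contains a point (playing the role of $0$) that is surrounded by the balls $B(o_j,R)$ so that \eqref{defK21/R} holds — every exterior point admits a closed ball missing the interior — yet no \emph{open} ball of radius $R$ through that point avoids $A$, exactly as in the proof that $W$ is not an $R$-body. Concretely, I expect to replace the vertex set $W$ by a solid region while keeping the obstruction at $0$, using the computation $r>R_0\arccos(1/d)$ from Claim~$\mathcal{Q}$ that any ball $B(z,R)\ni 0$ with $|z|<R$ must swallow a vertex. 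The main obstacle is part~(ii): one must engineer a full body (so that the hypotheses $A\in\mathcal{K}_2^{1/R}$ and $A=(\inte A)^-$ are genuinely in force) for which the closed-ball condition survives but the open-ball condition provably fails, and the delicate point is checking that at the critical exterior point the only available closed balls of radius $R$ touch $\pa A$ and cannot be opened up — this requires the sharp spherical estimate \eqref{distptoW} from Lemma~\ref{lemmaSantalo} together with the strict inequality in Claim~$\mathcal{Q}$, and it is essential that $d>2$ so that the vertices genuinely obstruct the central point.
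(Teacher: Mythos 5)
Your containment argument and your plan for part (ii) match the paper's, but there are two genuine gaps. First, the witness you name for strictness does not work: the set constructed in Theorem~\ref{lemmasigmao} is $W$ with $co_R(W)=W\cup\{0\}$, and $W\cup\{0\}$ (like $E\cup\{0\}$ of Theorem~\ref{rhulloidedisconesso}) \emph{is} an $R$-body, so it cannot separate the two classes. Your fallback $W$ itself does work, but only for $d>2$, since Theorem~\ref{lemmasigmao} requires $d>2$; the strict inclusion \eqref{stronginclusion} is asserted in every dimension, and the paper instead uses the dimension-free witness $E=\bigl(D(0,r)\cap B(0,R)^c\bigr)\cup \pa B(0,r_1)$ with $r_1<R<r$: the inner sphere $\pa B(0,r_1)$ has empty interior, so \eqref{defK21/R} is easy to satisfy, yet every open ball of radius $R$ containing the origin must meet that sphere. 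Your underlying idea (the condition \eqref{defK21/R} is blind to interior-free pieces) is the right one, but you need a witness valid for $d=2$ as well.

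Second, and more seriously, your argument for part (i) hinges on ``ruling out boundary contact by a small perturbation of the center,'' and that step is precisely what fails: part (ii) is a counterexample to it in $d>2$ (at the point $0$ the closed ball $D(o_1,R)$ misses $\inte(E)$ but no perturbation produces an open $R$-ball through $0$ avoiding $E$), and nothing in your sketch isolates what makes $d=2$ different. The paper argues by contraposition through Theorem~\ref{proPcor}: if $A$ is not an $R$-body then $\mathfrak{Q}_R$ fails, so by Theorem~\ref{cor(v)inplane} some point $\tilde z\in A^c$ lies in the \emph{interior} of the curvilinear triangle $\tilde T$ with vertices $a_1,a_2,a_3\in A$; the key geometric fact --- which is the idea missing from your proposal --- is that every closed ball of radius $R$ containing such a $\tilde z$ must contain one of the vertices $a_i$ in its \emph{open} interior (by Lemma~\ref{kincorE}, since a nearby point of $\inte(\tilde T)$ lies in the open ball and in $co_R(\{a_1,a_2,a_3\})$). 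The hypothesis $A=(\inte A)^-$ then guarantees $a_i$ is not isolated, so every such closed ball meets $\inte(A)$, contradicting $A\in\mathcal{K}_2^{1/R}$. Without this ``every $R$-ball through an interior point of $\tilde T$ swallows a vertex'' step, the upgrade from the closed-ball to the open-ball condition is not established.
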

 \begin{proof}
 The  inclusion \eqref {stronginclusion} is obvious: since if  $A$ is an $R$-body and  $x\in A^c$, then  $x\in B(R)$ and $B(R)\cap A=\emptyset$; therefore $\pa B(R) \cap int(A)= \emptyset$. Then, if  $x\in D(R)=\pa B(R)\cup B(R)$ thus $D(R)\cap int(A) =\emptyset.$ 
  The inclusion is strong: let $E= D(0,r)\cap B(0,R)^c \cup \pa B(0,r_1)$, with
 $r_1 < R < r.$ Then $E$ is not an $R$-body as if $x\in B(0,R)\setminus \pa B(0,r_1)$ there is no ball $B \subset E^c$ containing $x$; on the other hand $E \in  \mathcal{K}_2^{1/R}$.
 
 Let $d=2$ and 
  $ A \in \mathcal{K}_2^{1/R}$, $A=(int(A))^-$.
     By contradiction, if $A$ is not an $R$-body, then, by Theorem \ref{proPcor}, there exist $a_1,a_2,a_3 \in A$ such that there exists 
  $z\in co_R(\{a_1,a_2,a_3\})\cap A^c $. Since  $z\neq a_i, i=1,2,3$, then $co_R(\{a_1,a_2,a_3\})$ strongly contains $\{a_1,a_2,a_3\}$;  by Corollary \ref{corollario5.3} and by  Theorem   \ref{cor(v)inplane} 
  $$co_R(\{a_1,a_2,a_3\})= \{a_1,a_2,a_3\}\cup  \tilde{T},$$
  where $\tilde{T}$ is the curvilinear triangle  with vertices $a_i,  i=1,2,3$.  Then $z \in \tilde{T}\cap A^c$; since   $A^c$ is open, then there exists $\tilde{z}\in int( \tilde{T})\cap A^c$. As $\tilde{z}\in int( \tilde{T})$, every  balls $D(R) \ni \tilde{z}$  contains at least one of the vertices $a_i$ in its interior, let $a_1$. Then $D(R)$ contains a neighborhood $U$ of  $a_1$. Since $A=(int(A))^-$, $a_1$ can not be an isolated point of $A$, and in $U$ there are points of $int(A)$.  Therefore property  \eqref{defK21/R} does not hold for $\tilde{z}\in A^c$ and $A \not \in \mathcal{K}_2^{1/R}$, contradiction.
  
  In case ii), let us consider   the set $E$  defined by \eqref{E=example1}  of  Theorem \ref{rhulloidedisconesso}.  $E$ is not an $R$-body but $E\cup \{0\}$ is it.  Then any point of $E^c$, different from $0$ satisfies property \eqref{defK21/R}; moreover
  $$int(E)=int(D)\cap_{j=1}^{d+1}D(o_j,R)^c \cap  \{0\}^c ,$$
 then    $0$ satisfies property \eqref{defK21/R} too,  since the closed ball $D(o_1,R)$ does not  intersect $int(E)$. 
  Then $E\in \mathcal{K}_2^{1/R}$ and $ E$  is not an $R$-body. It easy to see that $E= (int(E))^-$.      
 \end{proof}

\begin{figure}\centering
\def\svgwidth{10cm}
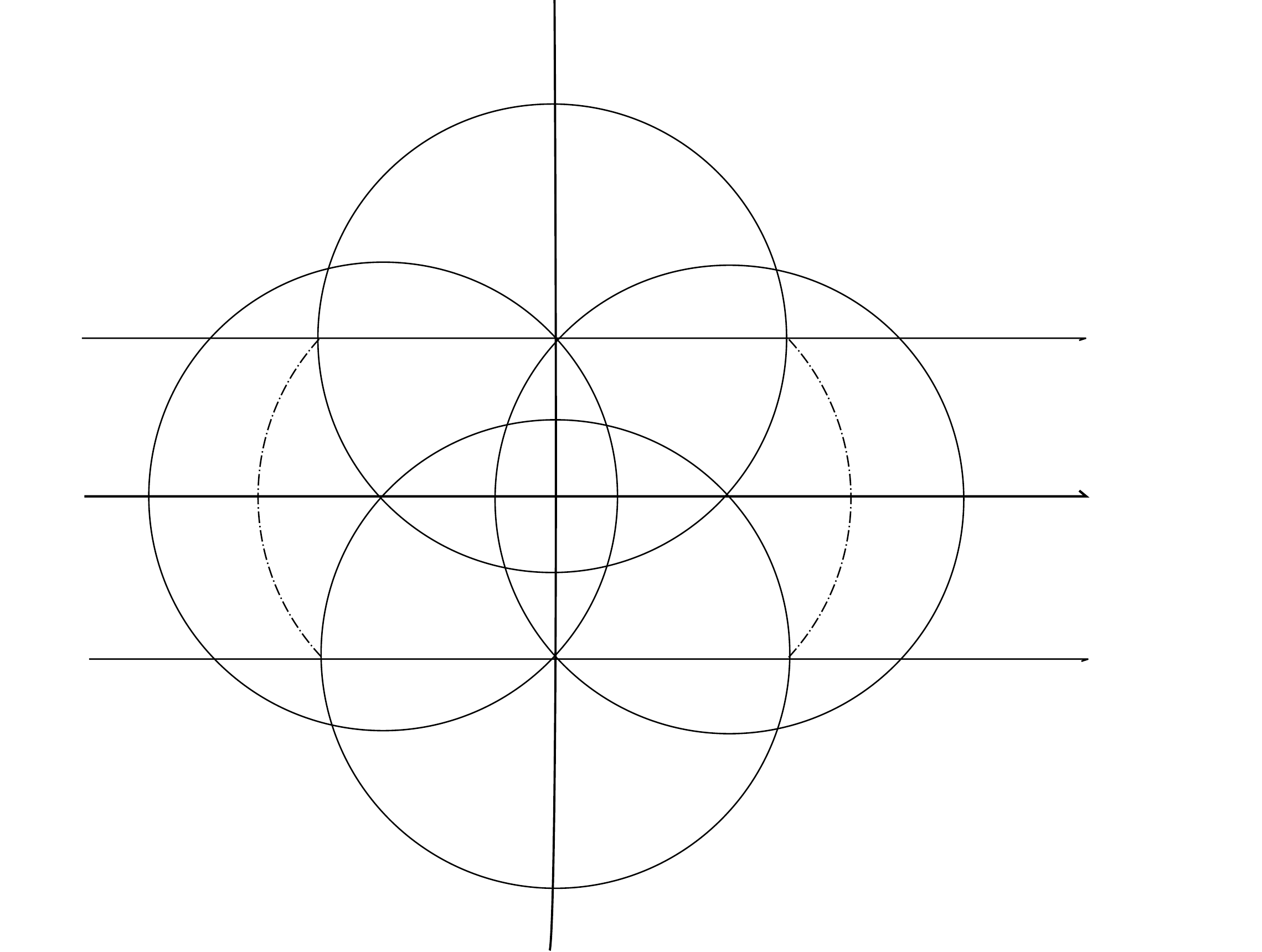
\caption{Case $|c_1-c_2| < \sqrt{3}R$ } 
\label{figura}
\end{figure}

\section*{Appendix}
A detailed  and corrected proof of Proposition \ref{propco2}

{\bf Theorem}\, \cite[Theorem 4.8]{Col} {\em Let $A \subset \E^2 $ be a connected subset of $B(x_0,R)$, $R > 0$. Then A admits R-hull}.

\begin{proof}
We argue by contradiction: if $A$ does not admit $R$-hull, by  Proposition \ref{theorem4.4} reach $(A')_R< R;$ then consequently to the proof of  \cite[Theorem 3.8,]{Col}, it follows that there exist $c_1,c_2 \in A'_R$ such that $ c_1 \ne c_2 $ and
 $$\mathfrak{h}(c_1,c_2) \cap A'_R= \{c_1, c_2\}.$$
   Thus every $x\in \mathfrak{h}(c_1,c_2)\setminus \{c_1,c_2\}$ satisfies $\dist(A,x) < R$. Let $\{c_3,c_4\}= \pa B(c_1)\cap \pa B(c_2)$(see Fig.1 in the case 
  $|c_1-c_2| < \sqrt{3}R$); the case where  $|c_1-c_2| \geq \sqrt{3}R$ has the same proceding.
  
   Let $a \in A$; as $ c_j \in A'_R$ ($j=1,2$), then $ |a-c_j| \geq R$, so 
  $a \in \Om_0:=(B(c_1))^c \cap (B(c_2))^c$.  
 Let $l_0$ be the line joining $c_1,c_2$, let $l_k$ be the line orthogonal to $l_0$, through $c_k$, $k=1,2$. Let $S$ be the strip bounded by $l_1,l_2$ and let $\Om_j$ be the component of the disconnected set $int(S \cap \Om_0)$, nearest  to $c_{j+2}$  $(j=1,2)$.

 Let $c_{4+j}$ be the point of $\pa \mathfrak{h}(c_1,c_2) $ nearest to  $c_{2+j}$ $(j=1,2)$; 
 let $c_{4+2k+j}$ be the point of $l_k\cap \pa B(c_k)$ nearest to $c_{2+j}$ $(k,j=1,2)$ (see Fig.1).
 
 Notation: if $j=1$, then $j^*=2$; if $j=2$, then $j^*=1$. 
 Let us notice that 
 the quadrilaterals with vertices $c_k, c_{4+2k+j}, c_{2+j^*}, c_{4+j} $ are  rhombi with sides of length R, for all  $k,j=1,2$, so $ |c_4-c_5| = |c_3-c_6| = R.$

Then points of $ A, $ with distance less than $ R $ from $ c_{5}$,
must exist and lie in  $ \Omega_1 $; similarly, points of $ A, $ with distance less than $ R $ from $ c_{6}$,
must exist and lie in  $ \Omega_2 $. Thus both $ \Omega_1,  \Omega_2 $ contain a point of $ A.$
As $A$ is connected, then $l_0\cap \Om_0$ contains a point $a_0 \in A$. With no restriction  one may assume that $|a_0-c_2| > |a_0-c_1|$. Then there should be a point of $A$ in each of two half lines of $l_1 \cap \Om_0$. This is impossible: these  two points would have distance greater or equal than $2R$.
\end{proof}

\section*{Acknowledgment}
This work is dedicated to our unforgettable  friend Orazio Arena.

\noindent This work has been partially supported by INDAM-GNAMPA(2022).

\end{document}